\newtheorem{thm}{Theorem}[section]
\newtheorem{lemma}[thm]{Lemma}
\newtheorem{prop}[thm]{Proposition}
\newtheorem{cor}[thm]{Corollary}
\newcommand{\R}{\mathbb R}
\theoremstyle{remark}
\newtheorem{remark}[thm]{Remark}
\numberwithin{equation}{section}
\newcommand{\be}{\begin{equation}}
\newcommand{\ee}{\end{equation}}
\newcommand{\Red}[1]{\begingroup\color{red} #1\endgroup}
\begin{document}
\title[Multi--localized time-symmetric initial data]{Multi--localized time-symmetric initial data for the Einstein vacuum equations}

\author{John Anderson, Justin Corvino, Federico Pasqualotto}
\address{Department of Mathematics, Stanford University, Stanford, CA 94305, USA}
\email{jrlander@stanford.edu}
\address{Department of Mathematics, Lafayette College, Easton, PA 18042, USA}
\email{corvinoj@lafayette.edu}
\address{Department of Mathematics, UC Berkeley, Berkeley, CA 94720, USA}
\email{fpasqualotto@berkeley.edu}

\subjclass[2010]{Primary 53C21, 83C99}

\begin{abstract}
We construct a class of time-symmetric initial data sets for the Einstein vacuum equation modeling elementary configurations of multiple ``almost isolated" systems.  Each such initial data set consists of a collection of several localized sources of gravitational radiation, and lies in a family of data sets which is closed under scaling out the distances between the systems by arbitrarily large amounts. This class contains data sets which are not asymptotically flat, but to which nonetheless a finite ADM mass can be ascribed. The construction proceeds by a gluing scheme using the Brill--Lindquist metric as a template. Such initial data are motivated in part by a desire to understand the dynamical interaction of distant systems in the context of general relativity.  As a by-product of the construction, we produce complete, scalar-flat initial data with trivial topology and infinitely many minimal spheres, as well as initial data with infinitely many Einstein--Rosen bridges.
\end{abstract}

\maketitle

\tableofcontents

\section{Introduction} 
In this paper, we construct families of initial data sets for the Einstein vacuum equation. These data sets represent configurations of almost isolated and localized sources of gravitational radiation. The data we construct are motivated in part by studying the \emph{asymptotic stability} of Minkowski spacetime relative to initial data perturbations living in these classes (see Section~\ref{sec:motivation}).

We shall focus on constructing \emph{time-symmetric} initial data sets $(M,g)$, in which case the vacuum constraint equations reduce to the condition that the metric $g$ have vanishing scalar curvature. The constructions we consider are based on \emph{Brill--Lindquist metrics} (see \cite{bl}, cf. \cite{Mis63, Sor-Stav}). These metrics are constructed by considering a (finite or countably infinite) sequence $\mathfrak p$ of distinct points $p_i$ in $\R^3$ and a corresponding sequence $\mathfrak m$ of positive numbers $m_i$ meant to represent masses. The Brill--Lindquist metric associated to these points and masses is then
\[
(g^{\mathfrak m, \mathfrak p}_{BL})_{ij}(x)=\left (1 + \sum_k \frac{m_k}{2 |x-p_k|} \right )^4 \delta_{i j},
\]
where $|x-p_k|$ denotes the Euclidean distance from the point $p_k$. We assume the series converges uniformly on compact subsets of $\mathbb R^3\setminus \mathfrak p$, so that it in fact defines a positive harmonic function here.  The Brill--Lindquist metric is conformal to the Euclidean (flat) metric, and as the formula for the scalar curvature under a conformal change gives $R(g)= -8u^{-5}\, \Delta u$ for $g_{ij}=u^4 \delta_{ij}$, we have that the Brill--Lindquist metric has vanishing scalar curvature on $\mathbb R^3\setminus \mathfrak p$.

The main goal of this paper is to modify this metric locally near each of the points $p_i$ to yield a smooth metric with vanishing scalar curvature on $\mathbb R^3$, and we employ a gluing construction to accomplish this. Because we are interested in perturbations of the trivial initial data set $(\mathbb R^3, g_{ij}=\delta_{ij})$, the masses $m_i$ will be taken to be small, and for a given $k\in \mathbb Z_+$, the modifications near the points $p_i$ will ensure that the new metric is $C^k$-close to the Euclidean metric. It is important to remark that given a suitable mass sequence $\mathfrak m$, we do not need to impose any symmetry requirements on the sequence of points $\mathfrak p$, only that they are at a sufficiently large distance from each other.  Thus, we are constructing classes of $N$-body data sets, where the number $N$ is at most countable, and which can be construed as representing a sum of roughly spherically symmetric perturbations which are localized near distinct points.

In practice, we will first prove a basic gluing statement in the vicinity of a Schwarzschild metric (Theorem~\ref{thm:mainlemma}). We are then going to use this basic gluing construction to build examples in Theorem~\ref{thm:main} of time-symmetric vacuum initial data sets on $\mathbb R^3$ obtained by filling in suitably chosen Brill--Lindquist metrics $g^{\mathfrak m, \mathfrak c}_{BL}$ with what we construe as ``localized perturbations" of small mass $m_i$ near $c_i$.  The theorem focuses on $\mathfrak m$ countably infinite, whereas the construction works in the finite case, as noted in Corollary ~\ref{cor:N}.  The construction applies for $\|\mathfrak m\|_{\ell^\infty}$ suitably small, and so the results include data sets as follows: 

\begin{enumerate}
\item  We will construct initial data sets with an arbitrary finite number $N$ of localized perturbations.  In this case, each such initial data set is \emph{asymptotically flat} (albeit non-uniformly in the separation between the masses), with ADM mass $\sum^N_{i=1} m_i$. By increasing the number of localized pieces, we can achieve arbitrarily large ADM mass (see Remark~\ref{rmk:mass}).

\item  If $\mathfrak m\in \ell^1$ with $\|\mathfrak m\|_{\ell^\infty}$ is sufficiently small, the constuction will yield data with a countably infinite number of localized perturbations whose mass sequence $\mathfrak m= (m_i)_{i\in \mathbb Z_+}$, with each $m_i>0$,  has finite sum $\sum_{i = 1}^\infty m_i < \infty$.  There are data sets in this class with arbitrarily large $\|\mathfrak m\|_{\ell^1}$.

\item If $\mathfrak m\in \ell^\infty\setminus \ell^1$ with $\|\mathfrak m\|_{\ell^\infty}$ sufficiently small, the construction will produce data with $\|\mathfrak m\|_{\ell^1}=\infty$.  This includes cases in which $\sum_{i = 1}^\infty m_i = \infty$, but just barely so, in the sense that $\sum_{i = 1}^\infty \frac{m_i} {i^\delta} < \infty$ for some small $\delta > 0$.  In this case the resulting data is not asymptotically flat (see Section \ref{sec:totmass}).   
\end{enumerate}

In the second case, it is not clear from our construction whether any such data sets are asymptotically flat, though certainly some data sets in this class are \emph{not} asymptotically flat. As we will see, to make the data asymptotically flat would require choosing the masses for perturbations centered at large $|c_i|$ correspondingly small; that in turn affects a smallness condition in Theorem ~\ref{thm:mainlemma}, which may then require the centers to be moved further apart, and it is thus not clear whether the argument can close.  In any case, in Section \ref{sec:totmass}, we argue that $\| \mathfrak m\|_{\ell^1}$ may provide a suitable notion of mass for the data sets we construct.

These constructions are perhaps intriguing because they give initial data sets which are (i) far from symmetric, (ii) not necessarily asymptotically flat, (iii) have arbitrarily large (or even infinite) $\|\mathfrak m\|_{\ell^1}$, but which nonetheless still seem reasonable as perturbations relative to which Minkowski spacetime will be stable as a solution to the Einstein vacuum equations. Indeed, the motivation for these constructions comes from the study of asymptotic stability for the initial value problem, as we describe in Section~\ref{sec:motivation}.  As a consequence of the construction, we also show the existence of complete metrics with vanishing scalar curvature and with infinitely many minimal spheres.

We now provide precise statements of our results.  The constructions extend \emph{mutatis mutandis} to all $n\geq 3$, but for clarity of exposition, we give the statements and proofs for the case $n=3$, as the modifications for higher dimensions should be readily apparent.

\subsection{Statements}\label{sec:statements}

For $m > 0$, we let $g_S^m$ be the Schwarzschild metric in isotropic coordinates, defined on $\mathbb R^3\setminus\{0\}$ as follows:
\begin{equation}\label{eq:schwdef}
(g_S^m)_{ij}(x) =  \Big(1 + \frac{m}{2|x|} \Big)^4 \delta_{ij}.
\end{equation}

We will work locally around each point in $\mathfrak p$, using the following general gluing statement.

\begin{thm}\label{thm:mainlemma}
Let $\Omega= \{x \in \R^3: 1 < |x| < 2\}$, $\Omega_{\mathrm{int}} := \{x \in \R^3: 1< |x| < 9/8\}$, $\Omega_{\mathrm{ext}} := \{x \in \R^3: 15/8 < |x| < 2\}$. Given $2\leq N_0 \in \mathbb{Z}_{+}$ and $m>0$, and given $\varepsilon>0$, there exists a $\delta = \delta(m, \varepsilon, N_0)$ such that the following holds true. Consider a smooth metric $g_{\mathrm{ext}}$ defined on $ \Omega$, with the following properties:
\begin{enumerate}
\item[(i)] $g_{\mathrm{ext}}$ has vanishing scalar curvature,
\item[(ii)]  for all $x \in \Omega$, and all multi-indices $I$ with $0 \leq |I|\leq N_0+3$, we have \begin{align}|\partial^I_x((g_{\mathrm{ext}})_{ij}(x) - (g_S^{m})_{ij}(x))|<  \delta. \label{eq:close-schw-1} \end{align}
\end{enumerate}
Then there exists a metric $\widehat g \in \mathcal{C}^{N_0}( \Omega)$ and a number $\widehat m$, with $|m - \widehat m| < \min(m/2, \varepsilon)$, such that the following properties hold true: 
\begin{enumerate}
\item $\widehat g$ has vanishing scalar curvature,
\item $\widehat g = g^{\widehat m}_S$ for all $x \in  \Omega_{\mathrm{int}}$,
\item $\widehat g = g_{\mathrm{ext}}$ for all $x \in  \Omega_{\mathrm{ext}}$,
\item  for all $x \in \Omega$, and all multi-indices $I$ with $0 \leq |I|\leq N_0$, we have \begin{align}|\partial^I_x({\widehat g}_{ij}(x) - (g_S^{\widehat m})_{ij}(x))|< \varepsilon.  \label{eq:close-schw-2} \end{align}
\end{enumerate}
\end{thm}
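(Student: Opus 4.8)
The plan is to set up the problem as a scalar-curvature deformation with support in the annulus $\Omega$, interpolating between the Schwarzschild metric $g_S^{\widehat m}$ (to be placed on $\Omega_{\mathrm{int}}$) and the given $g_{\mathrm{ext}}$ (kept on $\Omega_{\mathrm{ext}}$). First I would choose a smooth cutoff $\chi$ on $\Omega$, equal to $1$ near $\Omega_{\mathrm{int}}$ and $0$ near $\Omega_{\mathrm{ext}}$, and form the naive interpolation $g_\chi := \chi\, g_S^{m'} + (1-\chi)\, g_{\mathrm{ext}}$ for a parameter $m'$ near $m$; by hypothesis (ii) this metric is $\delta$-close to $g_S^m$ in $C^{N_0+3}$, hence its scalar curvature $R(g_\chi)$ is supported in the transition region and is $O(\delta)$-small in $C^{N_0+1}$. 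The goal is then to correct $g_\chi$ to a scalar-flat metric by a small perturbation supported away from $\Omega_{\mathrm{int}}\cup\Omega_{\mathrm{ext}}$, so that properties (2) and (3) are untouched.

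The natural mechanism is the conformal method combined with a Lagrange-multiplier / underdetermined-elliptic argument in the spirit of Corvino and Corvino--Schoen. Concretely, I would first try to kill $R(g_\chi)$ by a conformal change $g = u^4 g_\chi$, which requires solving the Lichnerowicz-type equation $-8\Delta_{g_\chi} u + R(g_\chi) u = 0$ with $u \to $ appropriate boundary values; since $R(g_\chi)$ is small, $u = 1 + v$ with $v$ small. The obstruction is that a global conformal factor will not be compactly supported in the transition region — it will propagate into $\Omega_{\mathrm{int}}$ and $\Omega_{\mathrm{ext}}$, destroying properties (2)–(3). This is exactly the difficulty Corvino's gluing technique is designed to overcome: instead of solving the determined conformal equation, one solves the \emph{underdetermined} equation $R'(g_\chi)\cdot h = -R(g_\chi)$ for a symmetric $2$-tensor $h$, using the surjectivity of the formal adjoint's associated operator. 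One works on a slightly enlarged annular region, introduces weights that vanish to high order at the two boundary pieces adjacent to $\Omega_{\mathrm{int}}$ and $\Omega_{\mathrm{ext}}$, and minimizes a functional of the form $\int \big(|R(g_\chi + h) |^2 \rho^{-1} + \ldots\big)$ subject to a constraint, obtaining via the implicit function theorem a solution $h$ that vanishes (with all derivatives up to order $N_0$) on $\Omega_{\mathrm{int}}$ and $\Omega_{\mathrm{ext}}$ and satisfies $\|h\|_{C^{N_0}} = O(\delta)$. Because Schwarzschild is a static metric, the cokernel of the relevant operator is spanned by the static potential, and the single scalar obstruction is absorbed by allowing the mass parameter $\widehat m$ to vary — this is where $|m - \widehat m| < \min(m/2,\varepsilon)$ enters, and why one must arrange $m'$ (and ultimately $\widehat m$) as part of the fixed-point scheme rather than fixing it in advance.

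The steps, in order: (1) fix the cutoff and the weighted function spaces (weighted Hölder or Sobolev spaces adapted to the two interior boundary components of $\Omega$); (2) linearize the scalar curvature map at $g_S^{m}$ and identify its cokernel via the static potential, recording that the obstruction is one-dimensional and is paired nondegenerately with variations of the mass; (3) estimate $R(g_\chi)$ in the weighted norm, getting a bound linear in $\delta$; (4) solve the underdetermined problem $R(g_\chi + h) = 0$ modulo the obstruction by a contraction-mapping / Picard iteration in the weighted space, simultaneously solving for the correction $\widehat m$ to $m$; (5) verify from the weights that $h$ and all derivatives up to order $N_0$ vanish on $\Omega_{\mathrm{int}}$ and $\Omega_{\mathrm{ext}}$, giving properties (2) and (3), and that the $C^{N_0}$ size of $h$ (plus $|m-\widehat m|$) is controlled by the input $\delta$, which can be taken small enough to force (4), i.e.\ estimate \eqref{eq:close-schw-2}; the chosen $\delta = \delta(m,\varepsilon,N_0)$ is read off from these estimates. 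The main obstacle is step (4) together with the bookkeeping in step (2): one must carefully track how the one-dimensional cokernel interacts with the mass parameter and ensure the combined map (perturbation $h$, mass correction $\widehat m$) is a contraction — in particular that the a priori estimates for the underdetermined operator $L_{g}^* := $ the adjoint linearized scalar curvature hold \emph{uniformly} as the background ranges over metrics $\delta$-close to $g_S^m$, and that the weights near the boundary do not degrade the constant. A secondary technical point is ensuring the solution is genuinely $C^{N_0}$ and not merely $C^{N_0-1}$ up to the gluing interfaces, which is why hypothesis (ii) demands closeness in $C^{N_0+3}$ rather than $C^{N_0}$: the two extra derivatives are consumed by the loss in the elliptic estimate and the conversion between Sobolev and Hölder control.
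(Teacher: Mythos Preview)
Your proposal is correct and follows essentially the same route as the paper: form the naive interpolation, apply Corvino-style underdetermined gluing in weighted spaces to solve modulo the one-dimensional cokernel spanned by the static potential $f^m$, and use the mass parameter to kill the remaining obstruction. The only minor difference is that the paper separates the two steps---first solving the projected problem $R(\widetilde g + h(\widetilde g)) = b\,\zeta f^m$ by iteration, then finding $\widehat m$ by the intermediate value theorem using only continuity of $\widetilde m \mapsto b$ together with the explicit computation $\int_\Omega f^m R\big((1-\psi)g_S^{\widetilde m}+\psi g_S^m\big)\,d\mu_{g_S^m} = 16\pi(m-\widetilde m)$---rather than running a simultaneous contraction for $(h,\widehat m)$.
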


\begin{remark} We can clearly replace $g_S^{\widehat m}$ in the preceding estimate with $g_S^m$ or $g_{\mathrm{ext}}$, since all these metrics can be made close. Furthermore, as we will recall later, the required assumption (\ref{eq:close-schw-1}) can be stated as a smallness condition on $(g_{\mathrm{ext}} - g_S^{m})$ in $C^{N_0+2, \alpha}(\Omega)$, and the resulting estimate (\ref{eq:close-schw-2} on $(\widehat g - g_S^{\widehat m})$ can be given in $C^{N_0,\alpha}(\Omega)$.  For related extension statements for initial data sets, compare \cite[Section 8.6]{cd}.
\end{remark}

As an application, we obtain the following statement concerning the existence of initial data sets with an arbitrarily large number of localized pieces.

\begin{thm}\label{thm:main}
Let $N_0\in \mathbb Z_+$ and $\varepsilon>0$. There exists $C>0$ and $0<\varepsilon_0< C^{-1} \varepsilon$ such that the following holds. For any sequence $\mathfrak{m}:=(m_i)_{i \in \mathbb{Z}_+}$ such that for all $i \in \mathbb{Z}_+$, $0<m_i \leq \varepsilon_0$ , there is a sequence $\mathfrak p=(p_i)_{i \in \mathbb{Z}_+}$ of distinct points in $\mathbb R^3$, so that, for all $\lambda\geq 1$, there exists a complete smooth metric $g$ on $\R^3$ with vanishing scalar curvature and the following properties: for all $x\in \R^3 \setminus \bigcup_{k\in \mathbb Z_+} \{x :|x- \lambda p_k| \leq 2\}$, 
\begin{align}
    g_{ij}(x) =  \Big(1 + \sum_{\ell\in \mathbb Z_+} \frac{m_\ell}{2|x-\lambda p_\ell|} \Big)^4\delta_{ij}, \label{eq:BLl}
\end{align}
while for all $k\in \mathbb Z_+$, and for all $x$ with $ |x-\lambda p_k| \leq 2$, and for all multi-indices $I$ with $0 \leq |I|\leq N_0$, $|\partial^I_x(g_{ij}(x) - \delta_{ij})|\leq Cm_k<\varepsilon$.
\end{thm}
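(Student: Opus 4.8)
The plan is to deduce Theorem~\ref{thm:main} from Theorem~\ref{thm:mainlemma} by applying the basic gluing statement once near each center $p_k$, after rescaling so that the local picture around $p_k$ looks like a small perturbation of a Schwarzschild metric of mass $\sim m_k$ on the fixed annulus $\Omega$. First I would fix $N_0$ and $\varepsilon$, invoke Theorem~\ref{thm:mainlemma} with some normalized mass (say $m=1$, or rather work scale-invariantly) to extract the threshold $\delta$, and use this to define $\varepsilon_0$ and $C$; the constant $C$ will absorb both the output constant $\varepsilon$ from \eqref{eq:close-schw-2} and the scaling factors. Because the Schwarzschild metric is scale-covariant — $g_S^m(x) = g_S^{m/\mu}(x/\mu)$ up to the conformal factor bookkeeping, and vanishing scalar curvature is a conformally/scaling natural condition — I can arrange that near each $p_k$, after translating $p_k$ to the origin and rescaling by a factor comparable to $1$ (the annuli $|x-\lambda p_k|\in(1,2)$ are already unit size), the metric $g_{BL}^{\mathfrak m,\lambda\mathfrak p}$ restricted there equals $g_S^{m_k}$ plus a correction coming from the contributions $\sum_{\ell\neq k} \frac{m_\ell}{2|x-\lambda p_\ell|}$ of all the other centers.

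The key quantitative input is that this cross term, together with all its derivatives up to order $N_0+3$, is small on the annulus around $p_k$. Here is where the separation hypothesis enters: one chooses the points $p_i$ inductively (or all at once with a lower bound on pairwise distances depending on the tail of $\mathfrak m$) so that $\sum_{\ell\neq k}\frac{m_\ell}{|\lambda p_k - \lambda p_\ell|}$ and its derivatives are bounded by $\delta = \delta(m_k,\varepsilon,N_0)/(\text{conformal factor expansion constants})$ for every $k$, uniformly in $\lambda\geq 1$ — and this is automatic in $\lambda$ since increasing $\lambda$ only spreads the points further apart, making the cross terms smaller. Since the conformal factor is raised to the fourth power, I would expand $\big(1+\tfrac{m_k}{2|x-\lambda p_k|}+\Theta_k(x)\big)^4$ where $\Theta_k$ is the cross term, and check that $g_{BL} - g_S^{m_k}$ (recentered) is controlled in $C^{N_0+3}(\Omega)$ by a constant times $\|\Theta_k\|_{C^{N_0+3}(\Omega)}$, using that $1+\tfrac{m_k}{2|x|}$ is bounded above and below on $\Omega$ independently of $m_k\leq\varepsilon_0$. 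Thus assumption~(ii) of Theorem~\ref{thm:mainlemma} holds around $p_k$ with the mass parameter $m_k$.

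Applying Theorem~\ref{thm:mainlemma} near each $p_k$ then produces, on the annulus $\{1<|x-\lambda p_k|<2\}$, a replacement metric $\widehat g_k\in C^{N_0}$ with vanishing scalar curvature that agrees with $g_{BL}$ on the outer sub-annulus $\Omega_{\mathrm{ext}}$ and agrees with a rescaled Schwarzschild metric $g_S^{\widehat m_k}$ (with $\widehat m_k$ close to $m_k$) on the inner sub-annulus $\Omega_{\mathrm{int}}$, hence extends smoothly and scalar-flatly across $\{|x-\lambda p_k|\leq 1\}$ — indeed $g_S^{\widehat m_k}$ on the unit ball is, after the inversion/Einstein--Rosen change of coordinates, a smooth scalar-flat metric, although for the statement as written one rather just notes $g_S^{\widehat m_k}$ is scalar-flat on the punctured ball and the gluing ensures $C^{N_0}$ matching; the ``complete smooth metric'' claim follows because on each ball the metric is (isometric to a piece of) the Schwarzschild geometry near its minimal sphere, giving a cylindrical-type end or, after the standard reflection, trivial topology. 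Patching these local modifications together — they have disjoint supports since the balls $\{|x-\lambda p_k|\leq 2\}$ are pairwise disjoint by the separation condition — yields the global metric $g$ on $\R^3$ satisfying \eqref{eq:BLl} outside the balls and the $C^{N_0}$ closeness estimate $|\partial^I(g_{ij}-\delta_{ij})|\leq Cm_k$ inside each ball: the latter follows from \eqref{eq:close-schw-2} (which gives closeness to $g_S^{\widehat m_k}$, itself within $Cm_k$ of $\delta_{ij}$ on $\Omega$ and extendably so on the ball since $\widehat m_k\leq 2m_k\leq 2\varepsilon_0$ is small) together with the elementary bound $\|g_S^{m}-\delta\|_{C^{N_0}(\Omega)}\leq Cm$.

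The main obstacle is the inductive choice of the point configuration $\mathfrak p$ that makes the cross-term smallness hold \emph{simultaneously for all $k$} with the $k$-dependent thresholds $\delta(m_k,\varepsilon,N_0)$: since $\delta$ may degenerate as $m_k\to 0$ (a smaller Schwarzschild mass means a more stringent relative perturbation), one must place centers with small associated mass correspondingly far from the rest, and verify that the series $\sum_\ell \frac{m_\ell}{|p_k-p_\ell|}$ still converges and stays below threshold for every $k$ — this is a bookkeeping argument balancing the decay of the masses against the growth of the separations, and it is exactly the point flagged in the introduction as the reason asymptotic flatness in case~(2) is unclear. A secondary technical point is ensuring the rescaling between ``mass $m_k$ Schwarzschild on the unit annulus'' and the normalization used to extract $\delta$ in Theorem~\ref{thm:mainlemma} is handled correctly, but since $\delta$ there is allowed to depend on $m$, one may simply feed $m=m_k$ directly and no rescaling of the spatial variable is needed — only the translation $x\mapsto x-\lambda p_k$ — so this is routine.
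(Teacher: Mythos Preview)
Your outline is mostly on target through the application of Theorem~\ref{thm:mainlemma} near each center, but there is a genuine gap at the ``filling in'' step. After gluing, you have on each inner region $\{0<|x-\lambda p_k|<1\}$ the exact Schwarzschild metric $g_S^{\widehat m_k}$ in isotropic coordinates. This metric is \emph{not} smooth at the puncture $\lambda p_k$: the conformal factor $\big(1+\tfrac{\widehat m_k}{2|x-\lambda p_k|}\big)^4$ blows up there, and geometrically that point is an entire asymptotically flat end, not a removable singularity. The inversion / Einstein--Rosen reflection you invoke does not produce a smooth metric on a ball; it produces a second end (so doubling gives the two-ended manifold $(\widehat M,\widehat g)$ of the corollary after Corollary~\ref{cor:N}, not $\mathbb R^3$). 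As written, your construction yields a scalar-flat metric on $\mathbb R^3\setminus\lambda\mathfrak p$, not on $\mathbb R^3$.

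The missing ingredient is Proposition~\ref{prop:sm-mass}: for each small mass $\widehat m_k$ one needs a genuinely smooth scalar-flat metric on all of $\mathbb R^3$ that equals $g_S^{\widehat m_k}$ outside a fixed ball, and this is a separate construction (via a transverse-traceless perturbation of the flat metric, a conformal correction, and a further gluing-with-modulation to an exact Schwarzschild end). The paper then rescales and translates these ``caps'' to replace the Schwarzschild interior near each $\lambda p_k$. The $C^{N_0}$ estimate $|\partial^I(g_{ij}-\delta_{ij})|\leq Cm_k$ on the ball also relies on the explicit control of these caps from Proposition~\ref{prop:sm-mass}, not just on \eqref{eq:close-schw-2}. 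Everything else in your sketch --- the cross-term estimate, the inductive placement of the $p_k$, the $\lambda$-monotonicity, the disjoint-support patching --- matches the paper's argument in Section~\ref{sec:BLest} and Section~\ref{sec:ThmMain}.
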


We make a few remarks about this statement.
\begin{itemize}
    \item This theorem states that we can construct data sets which look like localized perturbations around the points $c_k=\lambda p_k$.
    \item The dependence of the points $\mathfrak p$ on the masses $\mathfrak m$ derives from two considerations: (i) to ensure that the series defining the conformal factor will converge uniformly, and (ii) so that around each $\lambda p_k$, the metric in (\ref{eq:BLl}) is an admissible $g_{\mathrm{ext}}$ to which we can apply Theorem~\ref{thm:mainlemma}.
    \item The parameter $\lambda$ in the statement reflects the fact that set of admissible choices of $\mathfrak p$ is closed under a rescaling of the location of the points by a factor $\lambda \geq 1$. This is important for future applications to the dynamics arising from these data sets because we want uniform estimates as the masses get farther apart. 
\end{itemize}

From the proof of Theorem~\ref{thm:main} and the calculations in Section~\ref{sec:BLest}, the following corollary will follow, yielding time-symmetric data containing a finite number $N$ localized perturbations about specified centers.

\begin{cor}\label{cor:N}
Let $N_0\in \mathbb Z_+$. For any $\varepsilon > 0$, there is a $\mu>0$ such that for any $N\in \mathbb Z_+$ and any positive numbers $m_1, \ldots, m_N$ with $\max_{i\leq N} m_i \leq \mu$, there is a $\sigma>0$ so that for any $c_1, \ldots, c_N\in \R^3$ with $\min_{i \neq j} |c_i - c_j| \geq \sigma$, there is a smooth metric $g$ with vanishing scalar curvature on $\R^3$ with the following properties: on $\R^3 \setminus \bigcup_{j\leq N} \{x :|x- c_j| \leq 2\}$, $g_{ij}(x) =  \Big(1 + \sum_{k\leq N} \frac{m_k}{2|x-c_k|} \Big)^4\delta_{ij}$, while for all $x$ in the set $\bigcup_{j\leq N} \{x : |x-c_j| \leq 2\}$, and all multi-indices $I$ with $0 \leq |I|\leq N_0$, $|\partial^I_x(g_{ij}(x) - \delta_{ij})|< \varepsilon$.
\end{cor}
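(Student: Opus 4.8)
The plan is to deduce Corollary~\ref{cor:N} from the same construction that proves Theorem~\ref{thm:main}, simply by working with a finite mass sequence and choosing the centers directly rather than producing them from the masses. Given $N_0$ and $\varepsilon>0$, let $C$ and $\varepsilon_0$ be as furnished by Theorem~\ref{thm:main}, and set $\mu := \min(\varepsilon_0, \varepsilon/C)$. Now fix $N\in\mathbb Z_+$ and positive numbers $m_1,\dots,m_N$ with $\max_i m_i\le\mu$; I would like to run the argument of Theorem~\ref{thm:main} with the finite sequence $(m_1,\dots,m_N)$ in place of the infinite sequence $\mathfrak m$. The point is that the finite case is \emph{strictly easier}: the conformal factor $1+\sum_{k\le N}\frac{m_k}{2|x-c_k|}$ is a finite sum of harmonic functions, so convergence is automatic and one needs only the second requirement mentioned in the remarks following Theorem~\ref{thm:main} — that near each center $c_k$ the Brill--Lindquist metric be a legitimate $g_{\mathrm{ext}}$ for Theorem~\ref{thm:mainlemma}, i.e. $C^{N_0+3}$-close (on the annulus $1<|x-c_k|<2$ after translating $c_k$ to the origin and rescaling) to $g_S^{m_k}$ with closeness below the threshold $\delta(m_k,\varepsilon/C,N_0)$ from Theorem~\ref{thm:mainlemma}.

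Concretely, I would first extract from the calculations in Section~\ref{sec:BLest} (which Theorem~\ref{thm:main}'s proof already uses) a quantitative bound: there is a function $\sigma_0$ such that if $\min_{i\ne j}|c_i-c_j|\ge \sigma_0(m_1,\dots,m_N,N_0,\varepsilon)$, then for each $k$ the difference between the Brill--Lindquist conformal factor and the single-center Schwarzschild factor $1+\frac{m_k}{2|x-c_k|}$ — namely the contribution $\sum_{\ell\ne k}\frac{m_\ell}{2|x-c_\ell|}$ of the \emph{other} sources — together with its derivatives up to order $N_0+3$, is bounded on $\{1<|x-c_k|<2\}$ by a quantity going to $0$ as the minimal separation grows. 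This is exactly the estimate that makes $g_{\mathrm{ext}}$ admissible. Setting $\sigma := \sigma_0$ large enough that this quantity is $<\delta(m_k,\varepsilon/C,N_0)$ for every $k\le N$ simultaneously (a finite minimum over $k$, hence achievable), I then apply Theorem~\ref{thm:mainlemma} in each ball $\{|x-c_k|\le 2\}$: inside $\Omega_{\mathrm{int}}$ (i.e. $1<|x-c_k|<9/8$) it replaces the metric by a genuine Schwarzschild metric $g_S^{\widehat m_k}$, which extends smoothly across the puncture $c_k$ to a scalar-flat metric on the full ball (a Schwarzschild end caps off smoothly at the horizon, or one uses the standard isotropic-to-Schwarzschild identification); inside $\Omega_{\mathrm{ext}}$ it agrees with the unchanged Brill--Lindquist metric, so the pieces glue to a smooth global metric $g$ on $\R^3$ with vanishing scalar curvature. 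The formula $g_{ij}=\big(1+\sum_{k\le N}\frac{m_k}{2|x-c_k|}\big)^4\delta_{ij}$ then holds on the complement of the balls, since there we never modified anything.

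It remains to check the closeness estimate $|\partial^I_x(g_{ij}(x)-\delta_{ij})|<\varepsilon$ on each ball $\{|x-c_k|\le 2\}$ for $|I|\le N_0$. On the annular region $1<|x-c_k|<2$, estimate (\ref{eq:close-schw-2}) of Theorem~\ref{thm:mainlemma} gives $|\partial^I(\widehat g_{ij}-(g_S^{\widehat m_k})_{ij})|<\varepsilon/C$, and since $\widehat m_k\le \frac32 m_k$ (from $|m_k-\widehat m_k|<m_k/2$) a direct computation with (\ref{eq:schwdef}) bounds $|\partial^I((g_S^{\widehat m_k})_{ij}-\delta_{ij})|$ by $C' m_k$ on $\{|x-c_k|\ge 1\}$ for a universal $C'$; on the inner ball $\{|x-c_k|\le 9/8\}$, $g=g_S^{\widehat m_k}$ exactly, and after the smooth cap-off the same kind of bound $\le C'' m_k$ holds down to the center. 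Absorbing constants into $C$ — enlarging $C$ from Theorem~\ref{thm:main} if necessary, which only shrinks $\mu$ — and using $m_k\le\mu\le\varepsilon/C$ yields $|\partial^I_x(g_{ij}-\delta_{ij})|<\varepsilon$ throughout. The main obstacle, and the only place any real work is hidden, is the quantitative Section~\ref{sec:BLest} estimate controlling the cross-terms $\sum_{\ell\ne k}\frac{m_\ell}{2|x-c_\ell|}$ and all their derivatives uniformly on each annulus in terms of the separation; everything else is bookkeeping with the already-established Theorem~\ref{thm:mainlemma}. One should also take a moment to confirm that the smooth extension of $g_S^{\widehat m_k}$ across $c_k$ does not spoil the derivative bounds near the center — but this is a fixed model computation independent of $N$ and the $c_k$.
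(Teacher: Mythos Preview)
Your overall strategy---run the finite version of the Brill--Lindquist estimates to make $g_{BL}$ close to $g_S^{m_k}$ on each annulus, then apply Theorem~\ref{thm:mainlemma} locally---is exactly what the paper does, and the remark at the end of Section~\ref{sec:BLest} confirms that in the finite case a uniform separation $\sigma$ suffices. So the architecture of your argument is right.

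The genuine gap is in the step where you say the Schwarzschild metric $g_S^{\widehat m_k}$ ``extends smoothly across the puncture $c_k$'' or ``caps off smoothly at the horizon.'' It does not. In isotropic coordinates, $(g_S^{m})_{ij}(x)=(1+\tfrac{m}{2|x|})^4\delta_{ij}$ is defined on $\mathbb R^3\setminus\{0\}$, and the region $0<|x|<m/2$ is isometric (via inversion) to a second asymptotically flat end; the point $x=0$ lies at spatial infinity of that end, not at a regular interior point. Doubling across the horizon produces a manifold with the topology of an Einstein--Rosen bridge, not a ball. So after Theorem~\ref{thm:mainlemma} hands you exact Schwarzschild on $\{0<|x-c_k|<9/8\}$, you still have to \emph{replace} it by a genuinely regular scalar-flat metric on the full ball that matches Schwarzschild near $|x-c_k|=1$. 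This is precisely the content of Proposition~\ref{prop:sm-mass}, whose proof is not a ``fixed model computation'' but a nontrivial construction (a transverse-traceless perturbation of flat space, a conformal correction, and a continuity argument to hit the prescribed mass). It is also the reason the smallness threshold $\mu$ on the masses is needed in the first place: Proposition~\ref{prop:sm-mass} only produces such fillings for $m\le\mu_0$. Once you invoke Proposition~\ref{prop:sm-mass} to cap off each ball (as in the paper's proof of Theorem~\ref{thm:main}), the derivative estimate $|\partial^I(g_{ij}-\delta_{ij})|\le C m_k$ on $\{|x-c_k|\le 2\}$ follows and your bookkeeping goes through.
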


\begin{remark} We emphasize that we can fix masses $m_1, \ldots, m_N$ each of size at most $\mu$, and then the construction works with those same masses, for all choices of centers $c_1, \ldots , c_N$ with $\min_{i \neq j} |c_i - c_j|$ sufficiently large: each localized perturbation retains the same ``strength" even if they are separated by a large distance.
\end{remark}

 \begin{remark} By letting $\frac{\mu}{N^\delta}\leq m_i\leq \mu$ for $1\leq i\leq N$ and for some $0<\delta <1$, for example, it follows from the corollary that there exist such configurations with $\sum_{i\leq N} m_i$ arbitrarily large.   \label{rmk:mass}
\end{remark} 

The following is a direct corollary of the estimates in Section~\ref{sec:BLest} along with Theorem~\ref{thm:mainlemma}.  The Riemannian manifold $(\widehat M, \widehat g)$ can be construed as vacuum initial data with infinitely many Einstein--Rosen bridges (see also \cite{Mis63} where an arbitrary finite number of Einstein--Rosen bridges are constructed). 

\begin{cor} For any sequence $\mathfrak m$ with each $m_i>0$, there is a sequence $\mathfrak p$ as above such that for any $\lambda \geq 1$ and with  $\mathfrak c=\lambda \mathfrak p$, there is a complete smooth metric $g$ on $\mathbb R^3\setminus  \mathfrak c$ with vanishing scalar curvature, with $g= g_{BL}^{\mathfrak m, \mathfrak c}$ outside $\bigcup_{j\in \mathbb Z_+} \{x :|x-  c_j| \leq 2\}$, while for $0<|x_k-c_k|\leq 1$, $g_{ij}(x)= \Big(1 + \frac{\widehat m_k}{2|x-c_k|} \Big)^4 \delta_{ij}=: (g_S^{\widehat m_k, p_k})_{ij}(x)$, where $m_k/2 \leq \widehat{m}_k \leq  3 m_k/2$ for all $k \in \mathbb{Z}_+$.  If $m_k\leq 4/3$, then the Schwarzschild end around $c_k$ includes the minimal sphere (horizon) $\Sigma_k:= \{x:|x-c_k|= \tfrac{\widehat m_k}{2}\}$. 

 By doubling $(\mathbb R^3 \setminus \bigcup_{k\in \mathbb Z_+} \{ x:|x-c_k|\leq \tfrac{\widehat m_k}{2}\}, g)$ over the totally geodesic boundary $\Sigma:=\bigcup_{k\in \mathbb Z_+} \Sigma_k$, we obtain a complete smooth Riemannian manifold $(\widehat M, \widehat g)$ with vanishing scalar curvature.
\end{cor}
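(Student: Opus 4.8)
The plan is to build $g$ by the same three--region assembly used to prove Theorem~\ref{thm:main}, except that we keep the exact Schwarzschild piece in the innermost region, and then to carry out the doubling by hand. \emph{Choosing $\mathfrak p$ and assembling $g$.} Using the estimates of Section~\ref{sec:BLest}, for each $k$ the metric $g^{\mathfrak m,\mathfrak c}_{BL}$, written on the annulus $\{1<|x-c_k|<2\}$ after translating $c_k$ to the origin, differs from $g_S^{m_k}$ only through the contribution of the ``cross term'' $\sum_{\ell\neq k}\tfrac{m_\ell}{2|x-c_\ell|}$ to the conformal factor. I would choose the points $p_i$ recursively so that, for every $k$ simultaneously, this cross term and its derivatives up to order $N_0+3$ are smaller on $\{|x-c_k|\le 2\}$ than $\delta(m_k,\varepsilon,N_0)$ divided by a fixed constant coming from expanding the fourth power, and so that $\sum_\ell \tfrac{m_\ell}{|p_\ell|}<\infty$, which guarantees uniform convergence of the Brill--Lindquist conformal factor on compact subsets of $\R^3\setminus\mathfrak p$. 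Greedy placement suffices: once $p_1,\dots,p_{k-1}$ are fixed, take $|p_k|$ and $\min_{\ell<k}|p_k-p_\ell|$ large enough to absorb the already--fixed contributions, while reserving a summable tail budget for the later (necessarily farther) points. Since $|\lambda p_\ell-\lambda p_k|=\lambda|p_\ell-p_k|$, every separation only grows for $\lambda\ge 1$, so all these estimates persist for all $\lambda\ge 1$. With $\mathfrak p$ fixed and $\mathfrak c=\lambda\mathfrak p$, for each $k$ the translate $y\mapsto g^{\mathfrak m,\mathfrak c}_{BL}(y+c_k)$ on $\Omega$ is an admissible $g_{\mathrm{ext}}$ for Theorem~\ref{thm:mainlemma} with $m=m_k$; applying that theorem and translating back gives, on $\{1<|x-c_k|<2\}$, a scalar--flat metric equal to $g_S^{\widehat m_k,c_k}$ on $c_k+\Omega_{\mathrm{int}}$ and to $g^{\mathfrak m,\mathfrak c}_{BL}$ on $c_k+\Omega_{\mathrm{ext}}$, with $m_k/2\le\widehat m_k\le 3m_k/2$ (from $|m_k-\widehat m_k|<m_k/2$). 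Gluing these annular pieces to $g_S^{\widehat m_k,c_k}$ on $\{0<|x-c_k|\le 1\}$ and to $g^{\mathfrak m,\mathfrak c}_{BL}$ outside $\bigcup_k\{|x-c_k|\le 2\}$ --- the pieces agree on overlaps by conclusions (2)--(3) of Theorem~\ref{thm:mainlemma}, and the balls are pairwise disjoint since the $c_k$ are widely separated --- produces the claimed smooth, scalar--flat $g$ on $\R^3\setminus\mathfrak c$.

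\emph{The horizons.} If $m_k\le 4/3$ then $\widehat m_k/2<1$, so $\Sigma_k=\{|x-c_k|=\widehat m_k/2\}$ lies strictly inside the region where $g=g_S^{\widehat m_k,c_k}$ (indeed $g$ is exact Schwarzschild on $\{0<|x-c_k|<9/8\}$, so $\Sigma_k$ has a full Schwarzschild neighborhood). I would then invoke the standard fact that in isotropic coordinates the inversion $\Phi_k:x\mapsto c_k+(\widehat m_k/2)^2(x-c_k)/|x-c_k|^2$ is an isometry of $g_S^{\widehat m_k,c_k}$ fixing $\Sigma_k$ pointwise; as the fixed--point set of an isometry is totally geodesic, $\Sigma_k$ is a totally geodesic --- in particular minimal --- embedded two--sphere. (Equivalently, the mean curvature of the coordinate sphere of radius $r$ in $g_S^{\widehat m_k,c_k}$ vanishes exactly at $r=\widehat m_k/2$.)

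\emph{The doubling and completeness.} Assume $m_k\le 4/3$ for all $k$ and set $M_0:=\R^3\setminus\bigcup_k\{|x-c_k|\le\widehat m_k/2\}$, a smooth manifold with boundary $\Sigma=\bigsqcup_k\Sigma_k$; this union is locally finite because the uniformly separated $c_k$ escape to infinity. Let $\widehat M:=M_0\cup_\Sigma M_0$, with $\widehat g$ equal to $g$ on each copy. Since $\Sigma$ is totally geodesic, $\widehat g$ is a priori only $C^{1,1}$, but near each $\Sigma_k$ one has $g=g_S^{\widehat m_k,c_k}$, invariant under the smooth reflection $\Phi_k$; hence a neighborhood of $\Sigma_k$ in $(\widehat M,\widehat g)$ is isometric to a neighborhood of the minimal sphere inside the full smooth scalar--flat isotropic Schwarzschild manifold $(\R^3\setminus\{c_k\},g_S^{\widehat m_k,c_k})$. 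Therefore $\widehat g$ is smooth and scalar--flat everywhere. For completeness, note the folding map $\pi:\widehat M\to \overline{M_0}=M_0\cup\Sigma\subset\R^3$ is $1$--Lipschitz from $(\widehat M,\widehat g)$ to Euclidean $\R^3$, because the conformal factor of $g$ is $\ge 1$ on $M_0$; hence $\widehat g$--bounded sets have Euclidean--bounded, and so precompact, image in the closed set $\overline{M_0}$, and $\pi$ is proper onto $\overline{M_0}$, so $\widehat g$--bounded sets have compact closure. Completeness follows by Hopf--Rinow.

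This corollary is essentially a repackaging of Theorem~\ref{thm:mainlemma} and the estimates of Section~\ref{sec:BLest}, so there is no serious obstacle. The step demanding the most care is the recursive choice of $\mathfrak p$ making the cross--term bounds hold uniformly in $k$ and in $\lambda\ge 1$ while also securing convergence of the Brill--Lindquist series; after that, the one genuine point in the doubling is that the glued metric is actually smooth, not merely $C^{1,1}$, which is immediate from the exact Schwarzschild form near $\Sigma$.
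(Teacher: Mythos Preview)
Your proposal is correct and follows essentially the same approach the paper indicates: the corollary is a direct consequence of the estimates in Section~\ref{sec:BLest} together with Theorem~\ref{thm:mainlemma}, applied locally around each $c_k$ without invoking Proposition~\ref{prop:sm-mass} to fill in the punctures. Your treatment of the doubling (smoothness via the exact Schwarzschild reflection $\Phi_k$) and of the minimal spheres is standard and correct.

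One minor imprecision: in your completeness argument you assert that ``the conformal factor of $g$ is $\ge 1$ on $M_0$,'' but on each gluing annulus $\{1<|x-c_k|<2\}$ the metric $\widehat g$ produced by Theorem~\ref{thm:mainlemma} is not conformally flat, so there is no conformal factor to speak of there. This does not damage the argument: on those annuli $\widehat g$ is $C^0$-close to $g_S^{m_k}$, hence uniformly comparable to the Euclidean metric on each (compact) annulus; and since the $c_k$ are pairwise separated by at least $5$ and escape to infinity, any $\widehat g$-bounded set can meet only finitely many such annuli (to pass from one to another you must traverse a Brill--Lindquist region where $g\ge\delta_{ij}$). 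With that tweak your properness/Hopf--Rinow argument goes through. You might also note explicitly that $(\R^3\setminus\mathfrak c,g)$ itself is complete because each puncture $c_k$ lies at infinite $g$-distance, being the ``other'' asymptotically flat end of $g_S^{\widehat m_k,c_k}$.
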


The manifolds in this corollary have infinite topology, whereas with a bit more work, which we carry out in Section~\ref{sec:multi-h}, we obtain the following result. 

\begin{prop} \label{prop:multi-h} There are complete smooth metrics on $\mathbb R^3$ with vanishing scalar curvature and infinitely many minimal spheres. 
\end{prop}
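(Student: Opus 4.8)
The plan is to exploit the flexibility of Theorem~\ref{thm:main} by choosing the masses and, crucially, the centers so that the resulting scalar-flat metric on $\mathbb{R}^3$ contains infinitely many minimal spheres while retaining trivial topology. The key geometric input is that near each center $\lambda p_k$, the glued metric $g$ agrees with $g_S^{\widehat m_k}$ on the inner annulus $\Omega_{\mathrm{int}}$ (rescaled appropriately), by property (2) of Theorem~\ref{thm:mainlemma}. A Schwarzschild neck of mass $\widehat m_k$ in isotropic coordinates has its horizon at radius $\widehat m_k/2$, which lies well inside $\Omega_{\mathrm{int}}$ once $\widehat m_k$ is small. However—and this is the point that distinguishes the proposition from the preceding corollary—we do \emph{not} want to glue an entire Schwarzschild end in; we want the metric to remain defined and complete on all of $\mathbb{R}^3$ with trivial topology. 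So I would instead arrange that the perturbation near each center is a \emph{bounded} scalar-flat perturbation that nonetheless forces the existence of a minimal sphere enclosing $\lambda p_k$, without creating an asymptotically cylindrical or second-end region.

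The mechanism I would use is the following barrier/comparison argument. For a metric of the form $g_{ij} = u^4 \delta_{ij}$ on a region of $\mathbb{R}^3$, the Euclidean coordinate sphere $\{|x - \lambda p_k| = r\}$ has mean curvature (with respect to the outward normal) given by a standard formula $H = u^{-2}\big(\tfrac{2}{r} + 4\, \partial_\nu (\log u)\big)$, so it is (outer-)minimal precisely when $\tfrac{2}{r} + 4\,\partial_r (\log u) = 0$, i.e. $\partial_r(r^{1/2} u) $ vanishes appropriately; more robustly, by the maximum principle / existence theory for the outermost minimal surface (Meeks--Simon--Yau, or simply explicit foliations), it suffices to exhibit a trapped region: a sphere of some radius $r_{\mathrm{out}}$ with $H > 0$ (mean-convex outward) and a sphere of smaller radius $r_{\mathrm{in}}$ with $H < 0$, both enclosing no other center, forcing a minimal sphere in between. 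Near $\lambda p_k$, where by Theorem~\ref{thm:main} the conformal factor $u$ is $C^{N_0}$-close to a function that on the matched region behaves like $1 + \tfrac{m_k}{2|x-\lambda p_k|} + (\text{small, slowly varying})$, the term $\tfrac{m_k}{2r}$ makes $\partial_r(\log u)$ sufficiently negative at small $r$ to give $H<0$ there, while at $r$ of order $1$ the $\tfrac{2}{r}$ term dominates and $H>0$. The only thing that can spoil this is the contribution to $u$ from all the \emph{other} masses $m_\ell$, $\ell \neq k$: that contribution is harmonic near $p_k$ hence smooth, but it could in principle be large if the points pile up. This is where the scaling parameter $\lambda$ and the freedom in choosing $\mathfrak p$ enter.

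The concrete steps I would carry out are: (1) Fix $N_0 \geq 2$ (enough derivatives to control mean curvature) and invoke Theorem~\ref{thm:main} with a convenient choice of mass sequence, e.g.\ $m_i = \varepsilon_0 \cdot 2^{-i}$ (so $\mathfrak m \in \ell^1$, all masses positive and below the threshold), producing a sequence $\mathfrak p$ and, for each $\lambda \geq 1$, a complete scalar-flat metric $g$ on $\mathbb{R}^3$ of the stated form. (2) Observe that $\mathbb{R}^3$ with this $g$ has trivial topology by construction—$g$ is a smooth metric on all of $\mathbb{R}^3$, no points removed, no doubling—so the ``infinite topology'' issue of the preceding corollary is absent; this is the content we are trading the horizons for. (3) For a single center $\lambda p_k$, estimate the mean curvature of the Euclidean spheres $\{|x - \lambda p_k| = r\}$ for $r \in [\tfrac{m_k}{4}, 2]$ say, using (\ref{eq:BLl}) in the region $r > 2$-adjacent and the closeness estimate $|\partial^I(g_{ij} - \delta_{ij})| \leq C m_k$ inside; the dominant balance $\tfrac{2}{r}$ vs.\ $\tfrac{m_k}{2r^2}$ gives sign change of $H$ near $r \sim m_k/4$, and one checks the contributions from the other masses and from the gluing perturbation are lower-order provided $\sum_{\ell \neq k} \tfrac{m_\ell}{|\,\lambda p_k - \lambda p_\ell|} = \tfrac{1}{\lambda}\sum_{\ell\neq k}\tfrac{m_\ell}{|p_k - p_\ell|}$ is small, which holds for $\lambda$ large (or is already small by the admissibility constraints built into the choice of $\mathfrak p$ in Theorem~\ref{thm:main}). (4) Conclude via the standard existence result for minimal surfaces between mean-convex barriers that each annular region $\{m_k/4 < |x - \lambda p_k| < 2\}$ contains an embedded minimal $2$-sphere; since these annular regions are pairwise disjoint for distinct $k$ (again by the separation of the centers) and there are countably infinitely many of them, we obtain infinitely many minimal spheres. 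Fix any such $\lambda$; this proves the proposition.

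The main obstacle is step (3)–(4): ensuring the sign change of the mean curvature is genuinely produced by the localized Schwarzschild-like bump and is not destroyed by the ambient Brill--Lindquist tail or by the $C m_k$-sized gluing error. The delicate point is that the ``bump'' has strength $m_k$, the error has size $C m_k$, and the horizon radius scales like $m_k$—so one must track the $r$-dependence, not just the magnitude, of each term: the Schwarzschild term contributes $\partial_r \log u \sim -\tfrac{m_k}{2r^2}(1+o(1))$ which \emph{beats} $\tfrac{2}{r}$ for $r \lesssim m_k/4$, whereas a generic $C m_k$-bounded perturbation with bounded derivatives contributes only $O(C m_k)$ to $\partial_r \log u$, hence is negligible compared to $\tfrac{m_k}{2 r^2}$ at that scale. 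Making this comparison clean requires either keeping enough derivative control (hence the $N_0 + 3$ in Theorem~\ref{thm:mainlemma}) or, more cheaply, using the matching property (2) of Theorem~\ref{thm:mainlemma}: on $\Omega_{\mathrm{int}}$ the metric is \emph{exactly} $g_S^{\widehat m_k}$ (suitably rescaled), whose minimal sphere at $|x - \lambda p_k| = \widehat m_k / 2$ is explicit, so one can sidestep the perturbative mean-curvature estimate entirely and simply note that the exact Schwarzschild horizon sits inside the region where $g$ coincides with Schwarzschild—provided the scaling in the application of Theorem~\ref{thm:mainlemma} places $\Omega_{\mathrm{int}}$ at coordinate radii of order $m_k$ rather than order $1$, which is exactly how the construction in Section~\ref{sec:BLest} rescales the annuli around each center. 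I would take this second route, as it makes the proof short: the only remaining check is that cutting $\mathbb{R}^3$ off at these horizons is \emph{not} performed—we keep the full $\mathbb{R}^3$—so the minimal spheres are present as interior minimal surfaces of a complete metric on $\mathbb{R}^3$ with trivial topology, exactly as claimed.
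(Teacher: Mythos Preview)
Your proposal has a genuine gap. The metrics produced by Theorem~\ref{thm:main} do \emph{not} contain Schwarzschild horizons near the centers: the fill-in step (Proposition~\ref{prop:sm-mass}) is precisely designed to replace the punctured Schwarzschild region $\{0<|x-\lambda p_k|<1\}$ by a smooth metric that is $C^{N_0}$-close to \emph{Euclidean}, with the uniform bound $|\partial^I(g_{ij}-\delta_{ij})|\leq C m_k$ on all of $\{|x-\lambda p_k|\leq 2\}$. But at a Schwarzschild horizon $|x-c|=\widehat m/2$ in isotropic coordinates one has $g_{ij}=16\,\delta_{ij}$, which is nowhere near $\delta_{ij}$; so the horizon cannot survive. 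The paper says this explicitly at the opening of Section~\ref{sec:multi-h}: ``in so doing we removed any Schwarzschild horizons in $\{x:|x-c_k|<1\}$.'' Your ``second route'' therefore collapses: the region where $g$ agrees exactly with $g_S^{\widehat m_k}$ is the annulus $\{3/4\lesssim |x-\lambda p_k|\leq 9/8\}$ (order-one radii, not order-$m_k$), and the horizon radius $\widehat m_k/2$ lies strictly inside the fill-in region where the metric is no longer Schwarzschild.

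Your ``first route'' (barrier argument) fails for the same quantitative reason. With only $|\partial^I(g_{ij}-\delta_{ij})|\leq C m_k$ in $C^2$, the mean curvature of the coordinate sphere of radius $r$ is $\tfrac{2}{r}+O(C m_k)$, which is \emph{positive} for all $r$ bounded away from $0$ and only more so as $r\to 0$. There is no sign change and no trapped region; the computation you sketch, with a $-\tfrac{m_k}{2r^2}$ term beating $\tfrac{2}{r}$ at $r\sim m_k/4$, presupposes a conformal factor of Brill--Lindquist/Schwarzschild type at those radii, but that structure has been destroyed by the fill-in. The paper's actual proof is accordingly quite different: it builds the inner pieces by Miao's method, mollifying a superharmonic conformal factor, applying Lohkamp's $C^0$-small scalar-curvature deformation, and then making a conformal correction---the point being that the perturbation of the Schwarzschild neck is only $C^0$-small (not $C^2$-small), so a barrier argument \emph{does} retain a minimal sphere. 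These inner pieces are then glued to the Brill--Lindquist exterior by modulating a scaling parameter $\theta$ in place of the mass. As the paper remarks, this approach sacrifices the higher-derivative closeness to Euclidean that Theorem~\ref{thm:main} enjoys; that trade-off is essential, and your proposal tries to have it both ways.
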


\subsection{Motivation}\label{sec:motivation}
In the context of the Einstein vacuum equation, an initial data set consists of a three-dimensional Riemannian manifold $(M,g)$ and a symmetric two-tensor field $K$ along $M$. Because $K$ will end up being the second fundamental form of an embedding of $(M,g)$ into a vacuum spacetime, the Gauss and Codazzi equations tell us that $g$ and $K$ cannot be chosen freely, but must satisfy certain compatibility conditions, the Einstein constraint equations. Under these constraints, the work of Choquet-Bruhat and Geroch (see \cite{CB52} and \cite{ChBrGe69}) guarantees the existence of a $3 + 1$--dimensional Lorentzian manifold $(\overline{M},\overline{g})$ such that $(M,g)$ embeds isometrically in $\overline{M}$ as a Cauchy hypersurface with $K$ as its second fundamental form (see \cite{Ri09}). The properties of solutions of the Einstein constraint equations, including techniques such as the \emph{conformal method} to construct solutions, have been the focus of intense study in mathematical physics and geometric analysis (see, for example, \cite{bi-ce,gms-ece} and references therein).

The class of data we construct is motivated by a desire to study the stability of the Minkowski spacetime (the trivial solution of the Einstein vacuum equation) relative to perturbations which are localized near several points separated by large distances. In the monumental work \cite{ChrKla93}, Christodoulou--Klainerman showed that, relative to sufficiently small perturbations localized near a single point, the trivial solution is globally asymptotically stable. This means that the vacuum spacetimes arising from all such initial data sets do not form black holes, and in fact asymptotically converge back to flat Minkowski spacetime in an appropriate sense by radiating away gravitational energy. The goal is to show that something similar happens in the context of these perturbations localized near several points, and in fact to show that the perturbations ``decouple" from each other as they are taken farther and farther apart. This would allow us to probe, in the simplest setting, the interaction between several nearly isolated systems in the context of general relativity. These kinds of results also bring us closer to proving asymptotic stability for quasilinear equations in translation-invariant function spaces.

For the model case of systems of quasilinear wave equations satisfying Klainerman's \cite{Kla82} \emph{null condition} (which models nonlinear structure found in the Einstein vacuum equation in an appropriate gauge), these kinds of statements were shown in~\cite{AndPas22}. The data there were comprised of a sum of roughly spherically symmetric functions supported near $N$ points. The main analytical difficulty arises from the fact that norms measuring the size of perturbations usually have radial weights away from some chosen center. These norms grow polynomially in the distance between the points, and thus, it is the large distance limit which we are interested in understanding. Moreover, since the analysis focused on systems of nonlinear wave equations, there was no need to construct appropriate classes of initial data. The data from~\cite{AndPas22} corresponds closely to one of the classes of data from Theorem~\ref{thm:main}. Thus, the main results of this paper construct data which are somewhat analogous to the ones considered in~\cite{AndPas22}, and also provide some generalizations which may be interesting to consider in the context of asymptotic stability.

\subsection{$N$-body initial data sets} 

We now compare the classes of data sets we construct with other $N$-body initial data set constructions. We focus on versions of statements involving a single end and vacuum data, as this is the case most closely related to the present work.

In \cite{cd:as-pen,cd}, Chru\'{s}ciel--Delay construct initial data sets containing any finite number of disconnected regions, in each of which the data agrees with that from a suitably chosen Schwarzschild or Kerr black hole. The work \cite{ptc-maz} of Chru\'{s}ciel--Mazzeo then proves that in the vacuum spacetime development of suitable members of the constructed family of initial data, the intersection of certain Cauchy hypersurfaces with the complement of the past of future null infinity has multiple connected components, i.e. these developments represent multiple black hole spacetimes.  The works \cite{cci-ts,cci} of Chru\'{s}ciel--Corvino--Isenberg employ similar gluing constructions to produce vacuum initial data sets with disconnected regions from any finite number of solutions of the constraints, allowing more general configurations than those above.  In all these constructions, outside of a compact set, the solutions are given by initial data for a Kerr metric (reducing to Schwarzschild when time-symmetric).

A strikingly different configuration is produced in the work~\cite{car-sch} of Carlotto--Schoen, which constructs $N$-body data sets glued inside any finite number of conical regions in $\R^3$. Outside of these cones, the metrics can be flat. Because of finite speed of propagation, these gravitational disturbances will not interact for some (potentially large) time, while the spacetime will contain a (potentially large) region isometric to a region in Minkowski spacetime. These kinds of data sets have also been constructed recently using other techniques. The work~\cite{aretakis-czimek-rodnianski} of Aretakis--Czimek--Rodnianski employs characteristic gluing, while the work \cite{mao-tao} of Mao--Tao constructs particular fundamental solutions to the linearized scalar curvature equation.

In contrast, the data sets constructed here are cooked up to look like sums of roughly spherically symmetric pieces centered at different points. In the spirit of ~\cite{AndPas22}, we are also interested in having constructions that are uniform as the separation between the points becomes infinite. Thus, the Brill--Lindquist metrics are natural template candidates to obtain the desired properties.

We now briefly comment on the proof strategy for the main gluing constructions in Theorem~\ref{thm:mainlemma} and Theorem~\ref{thm:main}.
\subsection{Description of the proof}
We use the kind of gluing construction originating in \cite{cor:schw} (see also~\cite{cs:ak, cd} for the non-time-symmetric case). 

We first focus on the proof of Theorem~\ref{thm:mainlemma}.  We consider a smooth combination of $g_{\text{ext}}$ and $g^{\widetilde m}_{S}$: 
$$
\widetilde g := (1-\psi) g^{\widetilde m}_S + \psi g_{\text{ext}},
$$ 
where $\psi$ is a radial function which transitions from $0$ to $1$ in an annulus compactly contained in $\Omega= \{x \in \R^3: 1 < |x| < 2\}$, and we think of $\widetilde m$ as close to $m$. 

The core of the argument is a Lyapunov--Schmidt reduction. We linearize the scalar curvature of $\widetilde g$ around $g^{m}_{S}$, and we consider the kernel of its formal adjoint $L_{g_S^{m}}^*$. This kernel is one-dimensional, spanned by a radial function $f^{m}$. It then follows (Proposition~\ref{prop:def} that there exists a smooth tensor $h(\widetilde g)$ supported on $\overline \Omega$ for which $\widetilde g + h(\widetilde g)$ is a metric with 
\begin{equation}\label{eq:intro1}
R\big(\widetilde g + h(\widetilde g)\big) = \eta f^{m}
\end{equation}
where $\eta$ is a multiple of a bump function.  Moreover the map $g \mapsto h(g)$ is well-behaved, in the sense that it is continuous as a function of $g$, and $h(g)$ satisfies appropriate H\"older and Sobolev estimates.

As $\eta$ depends on $\widetilde m$, the final step is to show, by a continuity argument, that $\widetilde m$ can be modulated to a suitable $\widehat m$ to achieve $\eta = 0$ in~\eqref{eq:intro1}.  This shows that we can ``fill in'' the metric $g_{\text{ext}}$ with an exact Schwarzschild initial data set.

Concerning the proof of Theorem~\ref{thm:main} we use Theorem~\ref{thm:mainlemma} setting $g_{\text{ext}}$ equal to a Brill--Lindquist metric:
$$
g_{\text{ext}} = g_{BL}^{\mathfrak m, \lambda \mathfrak p} := \Big(1 + \sum_k \frac{m_k}{2|x - \lambda p_k|} \Big)^4 \delta_{ij}.
$$
For suitable such metrics, Theorem~\ref{thm:mainlemma} allows us to glue a piece of Schwarzschild in each annulus centered at $\lambda p_k$. To complete the construction, we need to cap this off with an ``inner'' region.  These regular ``inner'' pieces of data will be constructed by rescaling the data sets constructed in~\cite{cd:as-pen, cor:as-pen} (see Proposition \ref{prop:sm-mass}) to exactly match the outer asymptotics given by the mass $\widehat m$ in the transition region.

We use the fact that the construction is done locally in each of the gluing regions. We thus need to take care so that the configuration allows one to apply the gluing construction to each individual annulus. This can be ensured by possibly enlarging the separation distance between each of the localized pieces of initial data in order to satisfy the hypotheses of Theorem~\ref{thm:mainlemma}.

\subsection{Outline of the paper}
In Section~\ref{sec:MainLem} we provide the proof of Theorem~\ref{thm:mainlemma} (with additional details found in Appendix~\ref{sec:proof-prop}). In Section~\ref{sec:BLest}, we prove some estimates on the Brill--Lindquist metric which will be necessary for appropriately applying the gluing construction from Theorem~\ref{thm:mainlemma} in order to prove Theorem~\ref{thm:main}. In Section~\ref{sec:ThmMain}, we provide the proof of Theorem~\ref{thm:main}. In Section~\ref{sec:multi-h}, we provide the necessary modifications to prove Proposition~\ref{prop:multi-h}. Finally, in Section~\ref{sec:totmass}, we discuss a notion of ADM mass for some of the data sets we construct.

\subsection{Acknowledgements}
JA gratefully acknowledges that this work was partly supported by the National Science Foundation under Grant No. 2103266.

\section{Proof of Theorem \ref{thm:mainlemma} } \label{sec:MainLem}

\subsection{Preliminaries}
For $m>0$, we consider the Riemannian Schwarzschild metric in centered isotropic coordinates: $(g^m_S)_{ij}(x)=(1+ \tfrac{m}{2|x|})^4 \delta_{ij}$ on $\mathbb R^3\setminus\{0\}$.  The portion for $|x|>\tfrac{m}{2}$ is isometric to the metric $(1-\tfrac{2m}{r})^{-1} dr^2 + r^2 \mathring g_{\mathbb S^2}$ on the space $\{ r: r>2m\} \times \mathbb S^2$, where $\mathring g_{\mathbb S^2}$ is the round unit sphere metric. 

Let $L_g$ be the linearized scalar curvature operator, given by the well-known formula $L_g(h)= -\Delta_g (\mathrm{tr}_gh) + \mathrm{div}_g \mathrm{div}_g h - \langle h,  \mathrm{Ric}(g)\rangle_g$, with formal $L^2$ adjoint $L_g^*f= -(\Delta_g f)g+ \mathrm{Hess}_g f - f \mathrm{Ric}(g)$.   By explicit calculation, for metrics $\gamma$ and $\gamma +h$, $R(\gamma+h) = R(\gamma) + L_\gamma (h) + \mathcal Q_{\gamma}(h)$, where $\mathcal Q_{\gamma}(h)$ is expressed in coordinates as a homogeneous quadratic polynomial in $h_{ij}$, $h_{ij,k}$ and $h_{ij,k\ell}$, with coefficients that depend smoothly on $\gamma_{ij}$, $\gamma_{ij,k}$, $\gamma_{ij,k\ell}$, as well as $h_{ij}$, $h_{ij,k}$ and $h_{ij,k\ell}$.

Since $g_S^m$ has vanishing scalar curvature, any element $f$ in the kernel of $L^*_{g_S^m}$ satisfies $\mathrm{Hess}_{g_S^m}(f) = f \mathrm{Ric}(g_S^m)$ (this follows from taking the trace of the equation).  A direct computation proves that a kernel element $f$ is a radial function (i.e. it only depends on $|x|$), and in fact the kernel of $L^*_{g_S^m}$ is spanned by, in isotropic coordinates, the function $f^m(x)= \frac{ 1-\frac{m}{2|x|}}{1+\frac{m}{2|x|}}$. If we change to coordinates for which $g_S^{m}= (1-\tfrac{2m}{r})^{-1} dr^2 + r^2 \mathring g_{\mathbb S^2}$, then the kernel is spanned by $\sqrt{1-\tfrac{2m}{r}}$. Indeed, under this change of coordinates, $2|x|= r-m+\sqrt{r(r-2m)}$, from which we conclude.

For a metric $g$, we will denote the volume measure by $d\mu_g$, and induced hypersurface measure by $d\sigma_g$; for the Euclidean metric, we may use to $dx$ and $dA$, respectively.

\subsection{The nonlinear projected problem}

Let $B= \{x\in \mathbb R^3: |x|< 2\}$, and recall $\Omega= \{x\in \mathbb R^3:1< |x|< 2\}$.

Let $d(x)= d(x, \partial \Omega)$: for $x\in \Omega$, $d(x)= \min (|x-1|, |x-2|)$.   Let $ \mathring \zeta:\mathbb R\rightarrow [0,1]$ be smooth and non-decreasing, $\mathring \zeta(t) = 0 $ for $t\leq \tfrac18$, say, while $\mathring \zeta(t) = 1$ for $t\geq \tfrac14$. Define 
$$
\zeta(x)= \begin{cases} \mathring \zeta(d(x)), \;\quad x\in \Omega \\ 0 , \qquad \qquad x\in \mathbb R^3\setminus \Omega \end{cases}.
$$
We let $\mathring \psi:\mathbb R\rightarrow \mathbb R$ be a smooth nondecreasing function with $\mathring \psi(t)=0$ for $t\leq \tfrac54$, while $\mathring \psi(t)=1$ for $t\geq \frac74$.
Let $\psi:\mathbb R^3\rightarrow \mathbb R$ be defined as $\psi(x) = \begin{cases} \mathring \psi(|x|), \quad  |x|< 2 \\  1, \;\quad\qquad  x\in \mathbb R^3\setminus  B\end{cases}.$

 As a first step to proving Theorem \ref{thm:mainlemma}, we show we can perturb $\widetilde g$ on $\Omega$ to a metric with vanishing scalar curvature modulo a one-dimensional error.   The H\"{o}lder norms in the next proposition can be computed at a (smooth) background metric, say, or at $\gamma$, and a similar remark holds for the proof of the theorem below. 

\begin{prop} Let $0<\alpha<1$ and $\Omega' := \{x \in \R^3: 9/8 < |x| < 15/8\}\subset \Omega$.   There is a constant $C_0>0$ such that for (smooth) metrics $\gamma$ sufficiently near $g^m_S$ in $C^{4,\alpha}(\overline{\Omega})$ and with scalar curvature $R(\gamma)$ supported in $\overline{\Omega'}$, there exist a constant $ b(\gamma)$ along with a smooth symmetric tensor $h(\gamma)$, which extends smoothly by $0$ outside $\Omega$, such that $\gamma+h(\gamma)$ is a metric with $$R(\gamma+h(\gamma))= b(\gamma) \zeta f^{m} .$$   Moreover, $\gamma \mapsto (h(\gamma), b(\gamma)):C^{4, \alpha}(\overline{\Omega})\rightarrow C^{2, \alpha}(\overline{\Omega})\times \mathbb R$ is continuous, and we have the estimate $\|h(\gamma)\|_{C^{2,\alpha}} \leq C_0 \|R(\gamma)\|_{C^{0, \alpha}}$.  In fact for $k\in \mathbb Z_+$, there is $C_k>0$ such that $ \|h(\gamma)\|_{C^{k+2,\alpha}} \leq C_k \|R(\gamma)\|_{C^{k, \alpha}}$. \label{prop:def} \end{prop}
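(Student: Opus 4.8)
The plan is to run the underdetermined--elliptic gluing scheme of \cite{cor:schw}: linearize the scalar curvature at the Schwarzschild background, invert the linearization modulo its one--dimensional obstruction in a weighted functional--analytic framework tailored so that the correction extends smoothly by zero, and then close the nonlinear problem by a contraction mapping. Using the expansion $R(\gamma+h)=R(\gamma)+L_\gamma(h)+\mathcal Q_\gamma(h)$ from the preliminaries, write
\[
R(\gamma+h) = L_{g_S^m}h + \big[\,R(\gamma) + (L_\gamma - L_{g_S^m})h + \mathcal Q_\gamma(h)\,\big],
\]
so that finding $(h,b)$ with $R(\gamma+h) = b\,\zeta f^m$ amounts to solving
\[
L_{g_S^m} h - b\,\zeta f^m \;=\; -\,R(\gamma) - (L_\gamma - L_{g_S^m})h - \mathcal Q_\gamma(h) \;=:\; \mathcal F(\gamma,h),
\]
where $\mathcal F(\gamma,0) = -R(\gamma)$ is supported in $\overline{\Omega'}$ (hence compactly in $\Omega$) and is small in $C^{0,\alpha}$ because $R(g_S^m)=0$ and $\gamma$ is close to $g_S^m$.

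The heart of the matter is a linear solution operator for the background $g=g_S^m$. Introduce a weight $\rho$ on $\Omega$, smooth and positive in the interior and vanishing to infinite order at $\partial\Omega$ (e.g.\ comparable to $e^{-1/d(x)}$ near the boundary), together with the associated weighted H\"older spaces. We look for corrections of the form $h=\rho^2 L_{g_S^m}^* u$; since $\rho$ suppresses all boundary derivatives, every such $h$ extends smoothly by $0$ across $\partial\Omega$. Applying $L_{g_S^m}$ produces the fourth--order, formally self--adjoint, elliptic operator $\mathcal T u := L_{g_S^m}\!\big(\rho^2 L_{g_S^m}^* u\big)$, and integration by parts---legitimate because $\rho^2 L_{g_S^m}^* u$ vanishes to infinite order on $\partial\Omega$---gives $\int_\Omega (\mathcal T u)\,v\,d\mu_{g_S^m} = \int_\Omega \rho^2\,\langle L_{g_S^m}^*u, L_{g_S^m}^* v\rangle\, d\mu_{g_S^m}$. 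Hence $\mathcal T$ is nonnegative, its null space is $\ker L_{g_S^m}^* = \mathrm{span}(f^m)$ (one--dimensional, by the preliminaries), and its range lies in $\{\psi:\int_\Omega \psi f^m\,d\mu_{g_S^m}=0\}$. One then proves---this is the technical core, deferred to Appendix~\ref{sec:proof-prop}---that $\mathcal T$ is an isomorphism from the weighted $C^{k+4,\alpha}$ space modulo $\mathrm{span}(f^m)$ onto the $f^m$--orthogonal subspace of the weighted $C^{k,\alpha}$ space, with elliptic estimates uniform across $k$. Since $f^m(x)=\tfrac{1-m/(2|x|)}{1+m/(2|x|)}>0$ on $\Omega$ in the range of $m$ under consideration and $\zeta\ge 0$ is positive on an open set, $\int_\Omega \zeta (f^m)^2\, d\mu_{g_S^m} > 0$; so for any admissible $\varphi$ one first picks the unique $b$ making $\varphi + b\,\zeta f^m \perp f^m$, then inverts $\mathcal T$ to obtain $u$ with $\mathcal T u = \varphi + b\,\zeta f^m$, and sets $h=\rho^2 L_{g_S^m}^* u$. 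This yields a bounded linear map $\mathcal G:\varphi\mapsto (h,b)$ solving $L_{g_S^m} h - b\,\zeta f^m = \varphi$, with $h$ extending smoothly by $0$ and, when $\varphi$ vanishes near $\partial\Omega$, obeying $\|h\|_{C^{k+2,\alpha}(\overline\Omega)} \le C_k\,\|\varphi\|_{C^{k,\alpha}(\overline\Omega)}$.

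To close the nonlinear problem, set $\Phi(h) := \big(\mathcal G(\mathcal F(\gamma,h))\big)_{\mathrm{tens}}$, the tensor component. For $\gamma$ sufficiently $C^{4,\alpha}(\overline\Omega)$--close to $g_S^m$, $\mathcal F(\gamma,h)$ is admissible for $\mathcal G$: the $R(\gamma)$ piece is supported in $\overline{\Omega'}$, while the $h$--dependent pieces vanish to infinite order at $\partial\Omega$ because $h$ does; moreover $L_\gamma - L_{g_S^m}$ is small in operator norm and $\mathcal Q_\gamma(\cdot)$ is quadratically small. Hence $\Phi$ is a contraction on a small ball of radius $\sim\|R(\gamma)\|_{C^{0,\alpha}}$ in the weighted $C^{2,\alpha}$ space, and its unique fixed point $h(\gamma)$, with the $b(\gamma)$ read off from $\mathcal G$, satisfies $R(\gamma+h(\gamma)) = b(\gamma)\,\zeta f^m$. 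The estimate $\|h(\gamma)\|_{C^{2,\alpha}} \le C_0 \|R(\gamma)\|_{C^{0,\alpha}}$ follows by absorbing the small terms in the fixed--point identity, and $\|h(\gamma)\|_{C^{k+2,\alpha}} \le C_k\|R(\gamma)\|_{C^{k,\alpha}}$ by induction on $k$ from the corresponding bound for $\mathcal G$ and the already--established lower--order estimates. Continuity of $\gamma\mapsto(h(\gamma),b(\gamma))$ into $C^{2,\alpha}(\overline\Omega)\times\R$ is the usual continuous dependence of a uniform contraction's fixed point on parameters, plus continuity of $\gamma\mapsto\mathcal F(\gamma,\cdot)$; and when $\gamma$ is smooth, elliptic regularity for $\mathcal T$ together with the infinite--order vanishing of $u$ at $\partial\Omega$ makes $h(\gamma)=\rho^2 L_{g_S^m}^* u$ smooth up to $\partial\Omega$ with all derivatives vanishing there, so it extends smoothly by $0$.

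The main obstacle is the linear theory of the second paragraph: one must choose the weight $\rho$ and the weighted H\"older scale so that $\mathcal T$ has the Fredholm property and a priori estimates with constants uniform in $k$, \emph{while simultaneously} guaranteeing that the resulting $h$ extends smoothly by zero. The naive choice of Cauchy data ($u=\partial_\nu u=0$ on $\partial\Omega$) for $L_{g_S^m}L_{g_S^m}^*$ fails, since the tangential--tangential component of $\mathrm{Hess}\,u$ need not vanish on $\partial\Omega$, so $L_{g_S^m}^*u$ does not vanish there; this is precisely what forces the weighted formulation, and propagating the weights through the elliptic estimates is the delicate point handled in the appendix. A minor additional ingredient, already in hand from the preliminaries, is that $\ker L_{g_S^m}^*$ is exactly one--dimensional and $\int_\Omega \zeta (f^m)^2\, d\mu_{g_S^m}\neq 0$, which is what makes the modulation parameter $b$ well defined.
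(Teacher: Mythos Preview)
Your proposal is correct and follows essentially the same approach as the paper's proof (carried out in Appendix~\ref{sec:proof-prop} as the more general Proposition~\ref{prop:def0}): exponentially decaying boundary weight, the fourth-order operator $L\rho L^*$, weighted elliptic estimates to obtain a linear solution operator modulo the one-dimensional cokernel, and then closing the nonlinear problem perturbatively. The differences are cosmetic: the paper uses $h=\rho L_\gamma^* f$ (linearizing at $\gamma$ rather than at $g_S^m$, with weight $\rho$ rather than $\rho^2$), obtains the linear inverse variationally by minimizing $\int_\Omega(\tfrac12\rho|L_\gamma^* u|^2-\psi u)\,d\mu_\gamma$ rather than invoking a Fredholm isomorphism, and runs a Picard iteration (Lemma~\ref{lem:eq:rjest}) rather than phrasing the nonlinear step as a contraction; the final unweighted $C^{k+2,\alpha}$ estimates are then extracted by observing that $R(\gamma+h(\gamma))-R(\gamma)=b(\gamma)\zeta f^m - R(\gamma)$ is supported in $\overline{\Omega'}$, away from $\partial\Omega$, so Proposition~\ref{prop:wtd-sch} applies with any $k$.
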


We postpone the proof, which follows along the lines of proofs of related results in \cite{cor:schw, cs:ak, cor-lan}, cf. \cite{cd}, to Appendix~\ref{sec:proof-prop}, where we will prove a more general statement, Proposition~\ref{prop:def0}.  With this fact in hand, we are ready to prove Theorem \ref{thm:mainlemma}.

\subsection{Proof of Theorem~\ref{thm:mainlemma}}
\begin{proof} Let $\varepsilon >0$.  For $\widetilde m>0$, we define 
$$
\widetilde g_{ij}(x) = (1- \psi(x))(g^{\widetilde m}_S)_{ij}(x) + \psi(x)(g_{\text{ext}})_{ij}(x).
$$
There is a $C^{4, \alpha}(\overline{\Omega})$-neighborhood $\mathcal U$ of $g_S^m$ and a $0<\xi<\min(\tfrac{m}{2},\varepsilon, 1)$, such that for $J_\xi=[m-\xi, m+\xi]$ and for any $g_{\mathrm{ext}}\in \mathcal U$ and $\widetilde m\in J_\xi$, $\widetilde g$ as above is sufficiently near $g_S^m$ to apply Proposition~\ref{prop:def} with $\gamma=\widetilde g$ to solve for $h(\tilde g)$, satisfying the conditions therein. Moreover there is a $\delta_0>0$ such that for metrics $g_{\mathrm{ext}}$ satisfying (\ref{eq:close-schw-1}) with $\delta< \delta_0$, $g_{\mathrm{ext}}\in \mathcal U$.  The constants in the estimates below are independent of $\widetilde m\in J_\xi$. 

We then define $\widehat h$ to be the difference $\widehat h := g_{\text{ext}} - g^m_S$, so that 
$$
\widetilde g= (1-\psi) g_S^{\widetilde m} + \psi g_S^{m}  + \psi \widehat h =:g_{\psi} + \psi \widehat h.
$$  
Now, $R(\widetilde g+h(\widetilde g))$ vanishes precisely when $$\int\limits_{\Omega} f^{m} R(\widetilde g+h(\widetilde g))\; d \mu_{g^m_S}=0.
$$
We note that
\begin{align*} 
R(\widetilde g+h(\widetilde g))&= R( (1-\psi) g_S^{\widetilde m} + \psi g_S^{m}  )+ L_{g_{\psi}}( \psi \widehat h+h(\widetilde g))+\mathcal Q_{g_{\psi}} (\psi \widehat h + h(\widetilde g))\\
&= R( (1-\psi) g_S^{\widetilde m} + \psi g_S^{m} )+L_{g_S^m}(\psi \widehat h+h(\widetilde g))\\
& \qquad  +(L_{g_{\psi}} - L_{g_S^m})(\psi \widehat h + h(\widetilde g))+ \mathcal Q_{g_{\psi}}(\psi \widehat h+h(\widetilde g)).
\end{align*}

The integral of the term $f^m L_{g^m_S}(h(\widetilde g))$ vanishes because $f^m$ is in the kernel of $L^*_{g^m_S}$, and $h(\widetilde g)$ and its derivatives vanish along the boundary.

We can estimate the term containing $L_{g^m_S}(\psi \widehat h)$: for some $C>0$,
\begin{equation*}
    \left| \int\limits_{\Omega} f^{m} L_{g_S^{m}}(\psi \widehat h)d\mu_{g^m_S} \right| \leq C \|\widehat h\|_{C^2(\Omega)}.
\end{equation*}

Moreover, there is a constant $C>0$ such that on $\overline{\Omega}$ 
$$|\mathcal Q_{g_{\psi}}(\psi \widehat h+h(\widetilde g))|\leq C (\|\widehat h\|^2_{C^2(\Omega)}+ \|h(\widetilde g)\|^2_{C^2(\Omega)}),$$ and
$$
|( L_{g_{\psi}}-L_{g_S^{m}}) (\psi\widehat h+h(\widetilde g))|\leq  C\|g^{m}_S-g^{\widetilde m}_S\|_{C^2(\Omega)} \cdot(\|\widehat h\|_{C^2(\Omega)}+ \|h(\widetilde g)\|_{C^2(\Omega)}).
$$

As for evaluating $\int\limits_{\Omega} f^{m} R( (1-\psi) g_S^{\widetilde m} + \psi g_S^{m}  )\; d\mu_{g^m_S}$, it is actually more convenient to change the radial coordinate to the area radius $\rho$, as shown in \cite{li-yu}.  In such coordinates, $f^{m}=\sqrt{ 1- \tfrac{2m}{\rho}}$, and so if we let $\mathring g_{\mathbb S^2}$ be the metric on the round unit sphere with area measure $d\mathring \sigma$, we have $g^{m}_S= (1-\tfrac{2m}{\rho})^{-1} d\rho^2 + \rho^2 \mathring g_{\mathbb S^2}$, and $f^{m}d \mu_{g^m_S}=\rho^2 d\rho \;d\mathring \sigma$.  If we let $F(\rho)= \Big((1-\psi) (1-\tfrac{2\widehat m}{\rho})^{-1}+ \psi (1-\tfrac{2m}{\rho})^{-1}\Big) $, we have $g_{\psi}= F(\rho) d\rho^2+\rho^2 \mathring g_{\mathbb S^2}$.  As in \cite[p. 763]{li-yu}, then $R(g_{\psi})=-2\rho^{-2} \frac{\partial}{\partial \rho} \Big( \rho (F(\rho))^{-1} -\rho\Big)$.  From here it is easy to compute 
\begin{align}
\int\limits_{\Omega} f^{m} R((1-\psi) g_S^{\widetilde m} + \psi g_S^{m})\; d \mu_{g^m_S}= 16\pi (m-\widetilde m).  \label{eq:mint}
\end{align}

With $0<\delta < \min(\delta_0, \xi^2)$, using (\ref{eq:close-schw-1}) and applying Proposition~\ref{prop:def}, we have 
$$
\|\widehat h\|_{C^{N_0+2,\alpha}(\Omega)} \leq  \delta, \qquad \|h(\widetilde g)\|_{C^{N_0,\alpha}(\Omega)} \leq C \|R(\widetilde g)\|_{C^{N_0-2,\alpha}} \leq C' (|m-\widetilde m|+\delta).
$$
Thus we conclude that there is a constant $C>0$ such that 
\begin{align*} 
\left|\int\limits_{\Omega} f^{m} R(\widetilde g+h(\widetilde g))\; d \mu_{g^m_S}- 16\pi(m-\widetilde m)\right| &\leq C ((m-\widetilde m )^2+  \delta).
\end{align*}
By Proposition \ref{prop:def}, $$\widetilde m\mapsto \mathcal I(\widetilde m):= \int\limits_{\Omega} f^{m} R(\tilde g+h(\tilde g))\; d \mu_{g^m_S}
$$ 
is continuous for $\widetilde m \in J_\xi$.  For $\xi$ sufficiently small, we then have by the above estimates that $\mathcal{I}(m-\xi) \cdot \mathcal{I} (m+\xi) < 0$. Thus by continuity, for some $\widehat m\in J_\xi$, $\mathcal I(\widehat m)= 0$, as desired.  \end{proof}


\section{Brill--Lindquist estimates}  \label{sec:BLest}

In order to prove Theorem~\ref{thm:main}, we show that for suitable $\mathfrak m$ and $\mathfrak p$, the series defining the metric $g_{B L}^{\mathfrak{m},\mathfrak{p}}$ is well defined on $\mathbb R^3\setminus \mathfrak p$, and that it satisfies the estimates necessary to apply Theorem~\ref{thm:mainlemma} near each point. We also want that the family of data sets we construct is closed under rescalings which move the points in $\mathfrak{p}$ away from each other. We thus begin with a discussion about sufficient conditions for both of these requirements to hold. In doing so, we shall see that the sufficient conditions we find are indeed closed under rescalings of the desired kind. We shall then construct points $\mathfrak{p}$ corresponding to an arbitrary sequence of masses $\mathfrak{m}$. The proof is rather straightforward, leveraging that near each $p_k$, the Brill--Lindquist metric is close to the corresponding Schwarzschild metric, with errors depending on the distance between the points in $\mathfrak{p}$.

We recall that $\mathfrak{m} = (m_i)_{i\in \mathbb Z_+}$, with each $m_i>0$, and $\mathfrak{p} = (p_i)_{i\in \mathbb Z_+}$, with $|p_i-p_j|\geq 5$ for $i\neq j$. We now assume that $\mathfrak{p}$ are chosen such that for all $x$ a distance at least $0<\theta \leq 1$ away from each $p_\ell$, we have that  $\sum_\ell \frac{m_\ell }{ 2 |x - p_\ell|} \le C\theta^{-1}(1+|x|)$. This is true whenever $\sum_\ell \frac{m_\ell}{1 + |p_\ell|}$ is finite: given any such admissible $x$, we consider the (finite) set $P_C$ of points $p_k\in \mathfrak p$ a distance at most $10 (1+|x|)$ from the origin, and let $P_F=\mathfrak p \setminus P_C$ be the set of remaining points.  We have that
\[
\sum_\ell {m_\ell \over 2 |x - p_\ell|} = \sum_{p_\ell \in P_C} {m_\ell \over 2 |x - p_\ell|} + \sum_{p_\ell \in P_F} {m_\ell \over 2 |x - p_\ell|}\, .
\]
The second sum is less than $\sum_\ell {m_\ell \over 1 + |p_\ell|}$, because for all $p_\ell \in P_F$, $\tfrac{11}{10}|p_\ell|\geq 1+|p_\ell|$, so that $|x-p_\ell|\geq |p_\ell|-|x|\geq \tfrac{9}{10} |p_\ell|\geq \tfrac{9}{11}(1+|p_\ell|)$. The first sum is less than $6(1 + |x|) \theta^{-1} \sum_\ell {m_\ell \over 1 + |p_\ell|}$, since for $p_\ell\in P_C$, $1+|p_\ell|< 12(1+|x|)$, giving us the desired result. We thus have a uniformly convergent series on every compact subset of $\R^3 \setminus \mathfrak{p}$, meaning that the series defines a harmonic function, and the metric $g_{B L}^{\mathfrak{m},\mathfrak{p}}$ thus has vanishing scalar curvature. We note that the derivatives of the series also converge uniformly. All of this remains true after rescaling the positions of the points by some $\lambda \ge 1$, since for such $\lambda$, 
\[
\sum_\ell {m_\ell \over 1 + \lambda |p_\ell|} \le \sum_\ell {m_\ell \over 1 + |p_\ell|}\, .
\]

With Theorem~\ref{thm:mainlemma} in mind, for positive integers $k$ and $N_1$, we estimate the difference $g_{B L}^{\mathfrak{m},\mathfrak{p}}-g_S^{m_k,p_k}$ in $C^{N_1}$ on the set $\overline{\Omega}_k$ of points $x$ with $1 \le |x - p_k| \le 2$, ensuring these metrics are close enough depending on $m_k$. To this end we write
\begin{equation} \label{eq:BLMetricExpansion}
\begin{aligned}
(g^{\mathfrak m, \mathfrak p}_{\mathrm{BL}})_{ij}(x) &= \left ( 1+ \sum_{\ell = 1} \tfrac{m_\ell}{2|x - p_\ell|} \right )^4 \delta_{ij}
\\ &= \left (1 + {m_k \over 2 |x - p_k|} \right )^4 \left (1 + \sum_{\ell \ne k} \Theta_{\ell k} (x) \right )^4 \delta_{i j},
\end{aligned}
\end{equation}
where for $\ell\neq k$, 
\begin{equation}\label{eq:kl}
\Theta_{\ell k} (x) = {m_\ell \over |x - p_\ell|} {|x - p_k| \over m_k + 2 |x - p_k|}.
\end{equation}
Thus, for any multi-index $I$ and for any $x$ with $1 \le |x - p_k| \le 2$, we have that 
\begin{equation} \label{eq:seriesbound1}
\begin{aligned}
|\partial^I_x \Theta_{\ell k} (x)| \le C m_\ell {1 \over |p_k - p_\ell|}, 
\end{aligned}
\end{equation}
where $C$ depends on $I$, but not on $\mathfrak m$ or $\mathfrak p$. We now define
\[
\Theta_k (x) = \sum_{\ell \ne k} \Theta_{\ell k} (x).
\]
We want to compare the Brill--Lindquist metric with a Schwarzschild metric of mass $m_k$ near each point $p_k$.  Given any sequence $\hat\delta_k>0$, we will pick the points $\mathfrak{p}$ such that for each $k$,
\begin{align}
\sum_{\ell \ne k} {m_\ell \over |p_k - p_\ell|} \leq \hat{\delta}_k. \label{eq:sum-est}
\end{align}
Given $\delta_k>0$, by picking $\hat{\delta}_k = \hat{\delta}_k (\delta_k)$ sufficiently small, we see by explicit expansion of the metric components in \eqref{eq:BLMetricExpansion} along with \eqref{eq:seriesbound1} that 
\[
\Vert (g_{B L}^{\mathfrak{m},\mathfrak{p}} - g_{S}^{m_k,p_k})_{i j} \Vert_{C^{N_1}(\overline{\Omega}_k)} < \delta_k.
\]
This condition is closed under the appropriate rescaling, as
\[
\sum_{\ell \ne k} {m_\ell \over \lambda |p_k - p_\ell|} \le \sum_{\ell \ne k} {m_\ell \over |p_k - p_\ell|}
\]
for $\lambda \ge 1$.  We will apply this by letting $\delta_k\leq \tfrac12 \delta$ from (\ref{eq:close-schw-1}) in Theorem~\ref{thm:mainlemma} for gluing near the metric $g_S^{m_k}$. While the $p_k$ have not yet been explicitly chosen, we note that $\delta_k$ depends only on $m_k$ and not on $p_k$.  Furthermore, as $|p_k-p_\ell|\leq |p_k|+|p_\ell|$, we have $$\sum_{\ell\neq k} {m_\ell \over |p_k| + |p_\ell|} \leq \sum_{\ell\neq k} {m_\ell \over |p_k-p_\ell|}\; ,$$ whence $\sum_\ell {m_\ell \over 1 + |p_\ell|}$ converges, as desired.

Given a sequence $\mathfrak{m}$ of positive masses, it is now relatively straightforward to construct a sequence of points $\mathfrak{p}$ which will result in a metric $g_{B L}^{\mathfrak{m},\mathfrak{p}}$ satisfying the required properties. One such construction is as follows.  

First, we define quantities to capture how much ``space'' we have in the required estimates, and ensure we continue to have room to add successive points and maintain the estimates: for $k\geq 1$ and $1\leq q\leq k$, let 
\begin{align*}
    \Xi_{q, k}&= \hat{\delta}_q - \sum_{\ell\neq q}^k {m_\ell \over |p_q - p_\ell|} . 
    \end{align*}
In case $q=k$ we let $\Xi_k= \hat{\delta}_k - \sum_{\ell \le k - 1} {m_\ell \over |p_k - p_\ell|}$.  Observe that for $1\leq q\leq k<k'$, $\Xi_{q, k}> \Xi_{q, k'}$. 

We let $p_1 = 0$, and note $\Xi_1=  \hat\delta_1$.  Let $p_2\neq p_1$ be chosen such that 
$$|p_2|=5+ 2m_2\Xi_1^{-1}+2\hat\delta_2^{-1}m_1.$$
Clearly $|p_2-p_1|=|p_2|\geq 5$, and
\begin{align*}
    \Xi_2&=  \hat\delta_2 - \frac{m_1}{|p_2-p_1|} = \hat\delta_2 - \frac{m_1}{|p_2|} \geq\frac12 \hat\delta_2\\
\Xi_{1, 2}&=  \hat\delta_1 -\frac{m_2}{|p_1-p_2|}= \hat\delta_1 - \frac{m_2}{|p_2|} \geq\frac12 \hat\delta_1.
    \end{align*}

We now proceed recursively to define $\mathfrak p$ such that for all $k$, $\Xi_k\geq \tfrac12 \hat\delta_k$, and $\Xi_{q,k}\geq 2^{-(k-1)} \hat \delta_q$ for all $q<k$.  These ensure that as we proceed in the recursion, the required estimates persist: as long as $\Xi_{k,k'}$ is nonnegative for all $k<k'$, (\ref{eq:sum-est}) will be satisfied.  As the recursion proceeds, we take care to choose $p_k$ so that it costs at most half of the remaining space for each of the $p_q$ with $q \le k - 1$. We will also arrange that for all $\ell \neq k$, $|p_k-p_\ell|\geq 5$.  In fact, in order to obtain the estimates, we arrange that $|p_q-p_\ell|\geq \tfrac12 |p_q|$ for all $\ell< q$.

For $k\geq 3$, suppose we have $p_1, \ldots, p_{k-1}$, with $\Xi_\ell\geq \tfrac12 \hat\delta_\ell$ for $1\leq \ell\leq k-1$, with $\Xi_{q, k-1}\geq 2^{-(k-2)} \hat\delta_q$ for $1\leq q< k-1$, and with $|p_q-p_\ell|\geq \max (5, \tfrac12|p_q|)$ for $\ell< q \leq k-1$.   Observe that these conditions do hold for $k=3$.

To proceed, for $k\geq 3$, we pick $p_k$ to be an arbitrary point with
\[
|p_k| = 2 |p_{k - 1}| + 4 m_k \max_{\ell \le k - 1} \Xi_{\ell, k-1}^{-1} + 4 \hat{\delta}_k^{-1} \sum_{\ell \le k - 1} m_\ell.
\]  
Each summand is designed to achieve a required estimate. First, we readily see 
$$|p_k-p_{k-1}|\geq |p_k|-|p_{k-1}|\geq |p_{k-1}|+ 4m_k \max_{\ell \le k - 1} \Xi_{\ell, k-1}^{-1} + 4 \hat{\delta}_k^{-1} \sum_{\ell \le k - 1} m_\ell\geq \tfrac12 |p_k|,$$ and in fact, since for $\ell< k$, $|p_k|\geq 2^{k-\ell} |p_\ell|$, we have
$$|p_k-p_\ell|\geq |p_{k}|-|p_\ell|\geq \tfrac12 |p_k|.$$ It also follows that $|p_k|\geq 10$ for $k\geq 3$, so that $|p_k-p_\ell|\geq 5$ for all $k\neq \ell$. 

Proceeding further, we find
$$\Xi_k = \hat{\delta}_k - \sum_{\ell \le k - 1} {m_\ell \over |p_k - p_\ell|}\geq  \hat{\delta}_k -  \frac{2}{|p_k|}\sum_{\ell \le k - 1} m_\ell\geq \frac12 \hat\delta_k,$$ 
$$\Xi_{k-1, k} = \Xi_{k-1} - \frac{m_{k}}{|p_{k-1}-p_k|}\geq \Xi_{k-1} - \frac{2m_k}{|p_k|} \geq \frac12 \Xi_{k-1} \geq 2^{-2} \hat \delta_{k-1},$$
while for for $q<k-1$,
$$\Xi_{q, k} = \Xi_{q, k-1} - \frac{m_{k}}{|p_k-p_{q}|}\geq \Xi_{q, k-1} -\frac{2m_{k}}{|p_k|}\geq \frac12 \Xi_{q, k-1}\geq 2^{-(k-1)} \hat\delta_q. $$

\begin{remark} For a given finite number of masses, there some $\sigma\geq 5$ such that it suffices just to pick $p_k$ so that $|p_k-p_\ell|\geq \sigma$ for $\ell\neq k$.
\end{remark}


\section{Proof of Theorem~\ref{thm:main}} \label{sec:ThmMain}

The last ingredient for the proof of the main theorem is the following proposition. 

\begin{prop} \label{prop:sm-mass}  For any $R_0>0$, and $k\in \mathbb Z_+$, there is a $\mu_0>0$ such that for each $0<m\le \mu_0$, there is a smooth metric $g^m$ on $\mathbb R^3$ with vanishing scalar curvature, for which $g^m=g^m_S$ for $|x|\geq R_0$, and with $g^m$ converging in $C^k$ on compact subsets to the Euclidean metric $\mathring g$ as $m\searrow 0$. 
\end{prop}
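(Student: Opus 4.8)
The plan is to reduce the statement to a single ``seed'' family of filled-in small-mass Schwarzschild metrics extracted from the literature, and then to obtain the family $\{g^m\}$ by an elementary rescaling.

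First I would record the seed. From the constructions of \cite{cd:as-pen} and \cite{cor:as-pen} (cf.\ \cite{cor:schw}, and the deformation machinery underlying Proposition~\ref{prop:def}), there are constants $r_1>0$ and $\mu_1>0$ and, for each $\mu\in(0,\mu_1]$, a smooth metric $\bar g^{(\mu)}$ on $\R^3$ with vanishing scalar curvature such that $\bar g^{(\mu)}=g_S^{\mu}$ on $\{|x|\ge r_1\}$, while $\bar g^{(\mu)}\to\mathring g$ in $C^k$ on compact subsets as $\mu\searrow 0$ (indeed with a rate $\|\bar g^{(\mu)}-\mathring g\|_{C^k(\{|x|\le r_1\})}\le C(k)\mu$, though only the convergence is needed). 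The crucial feature is that these are metrics on all of $\R^3$: the singular Schwarzschild end near the origin has been replaced by a regular region, and the behaviour outside the \emph{fixed} ball of radius $r_1$ is exactly that of Schwarzschild of mass $\mu$.

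Next comes the rescaling to an arbitrary matching radius $R_0$. Fix $R_0>0$ and $k$, set $c:=r_1/R_0$ and $\mu_0:=\mu_1/c=(R_0/r_1)\mu_1$. Given $0<m\le\mu_0$, let $\mu:=cm\in(0,\mu_1]$ and define $g^m$ by $g^m_{ij}(x):=(\bar g^{(\mu)})_{ij}(cx)$, i.e.\ $g^m=c^{-2}\chi_c^{*}\bar g^{(\mu)}$ where $\chi_c(x)=cx$. Since $\chi_c$ is a diffeomorphism and scalar curvature rescales by a positive constant under constant rescalings of the metric, $R(g^m)=c^{2}\big(R(\bar g^{(\mu)})\circ\chi_c\big)=0$. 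For $|x|\ge R_0$ one has $|cx|\ge r_1$, hence $\bar g^{(\mu)}$ agrees with $g_S^{\mu}$ at $cx$ and
\[
g^m_{ij}(x)=\Big(1+\tfrac{\mu}{2|cx|}\Big)^{4}\delta_{ij}=\Big(1+\tfrac{\mu/c}{2|x|}\Big)^{4}\delta_{ij}=\Big(1+\tfrac{m}{2|x|}\Big)^{4}\delta_{ij}=(g_S^m)_{ij}(x),
\]
using $\mu/c=m$. Finally, from $\chi_c^{*}\mathring g=c^{2}\mathring g$ we get $g^m-\mathring g=c^{-2}\chi_c^{*}(\bar g^{(\mu)}-\mathring g)$, so on any ball $B_R=\{|x|<R\}$,
\[
\|g^m-\mathring g\|_{C^k(B_R)}\le C(c,R,k)\,\|\bar g^{(\mu)}-\mathring g\|_{C^k(B_{cR})}\longrightarrow 0\quad\text{as }m\searrow 0,
\]
since $\mu=cm\searrow 0$. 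This produces the desired smooth, scalar-flat $g^m$, equal to $g_S^m$ for $|x|\ge R_0$ and converging to $\mathring g$ in $C^k_{\mathrm{loc}}$.

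I expect the main obstacle to be the seed construction rather than the scaling. The scaling step is routine, but one does have to keep track of the fact that it moves \emph{both} the mass parameter and the matching radius, which is exactly what forces the relation $\mu_0=(R_0/r_1)\mu_1$. The real content --- producing scalar-flat metrics on $\R^3$ that coincide with $g_S^{\mu}$ outside a fixed ball and degenerate to the flat metric as $\mu\to0$ --- cannot be done conformally (a conformally flat, scalar-flat metric that is smooth on all of $\R^3$ is flat, as its conformal factor is a positive harmonic function and hence constant by Liouville), so it relies on the localized scalar-curvature deformation technique, in the same spirit as \cite{cor:schw} and Proposition~\ref{prop:def}, but used here to solve the deformation problem outright on a ball rather than modulo a one-dimensional cokernel. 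Accordingly, I would devote most of the write-up to extracting the precise statement needed from \cite{cd:as-pen,cor:as-pen} --- in particular the $C^k$-smallness of $\bar g^{(\mu)}-\mathring g$ as $\mu\to0$ and the normalization of conventions --- the rescaling then being the short computation above.
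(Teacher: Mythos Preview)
Your reduction to a seed family plus rescaling is exactly the shape of the paper's argument, and your rescaling computation is correct (the paper ends its proof with the same rescaling remark). The problem is the seed. You assume that \cite{cd:as-pen,cor:as-pen} already furnish, for \emph{every} $\mu\in(0,\mu_1]$, a scalar-flat metric equal to $g_S^{\mu}$ outside a fixed ball. The paper explicitly points out that those references do not give this: they produce only a \emph{sequence} of small masses tending to zero, not a full interval. Rescaling alone cannot upgrade a sequence to a continuum, since with the matching radius held fixed at $R_0$ the scaling factor $c=r_1/R_0$ is forced, and hence the achievable mass $\mu/c$ is pinned to the discrete set $\{\mu_j R_0/r_1\}$.

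The substantive content of the paper's proof is precisely to close this gap. It builds a two-parameter family by (i) perturbing the flat metric along a compactly supported transverse-traceless tensor with amplitude $\sigma$, (ii) making a conformal correction so the result is scalar-flat with mass $m(\sigma)\asymp\sigma^2$, (iii) patching to $g_S^m$ over a large annulus and rescaling, and (iv) showing that the obstruction integral $F(\sigma,m,R)$ is continuous and strictly increasing in $m$ (via a Lipschitz bound on the gluing tensor $\tilde h$), so that for each small $\sigma$ there is a unique $M(\sigma)\in[m(\sigma)/2,2m(\sigma)]$ with $F=0$, and $M(\sigma)$ sweeps out an interval as $\sigma$ varies. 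Your proposal correctly identifies that the seed ``cannot be done conformally'' and must come from a localized deformation argument, but defers that argument entirely to the literature; that is exactly the step the paper has to supply.
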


Using this proposition, we can prove Theorem~\ref{thm:main}.

\begin{proof}[Proof of Theorem~\ref{thm:main}]   Let $\mathfrak m$ be a sequence with $0<m_i\leq \mu_1\leq \mu_0/2$, where $\mu_0$ is as in the preceding proposition.  By the computations in the preceding section, there is a sequence $\mathfrak p$ for which for all $\lambda\geq 1$, in each annulus $\Omega_k=\{ x: 1<|x-\lambda p_k|<2\}$, the metric $g^{\mathfrak m, \lambda \mathfrak p}_{BL}$ satisfies the requirements for $g_{\mathrm{ext}}$ and $m=m_k$ and $\varepsilon>0$ from Theorem ~\ref{thm:mainlemma}.  As the construction for Theorem ~\ref{thm:mainlemma} is localized, applying it around each point gives a metric on $\mathbb R^3\setminus \lambda\mathfrak  p$ which has vanishing scalar curvature, agrees with $g_{BL}^{\mathfrak m, \lambda \mathfrak p}$ at all $x$ with $|x-\lambda p_k|\geq 2$ for all $k$, and is identical to the Schwarzschild metric $g_S^{\widehat m_k, \lambda p_k}$ with mass $\widehat m_k\leq 3\mu_0/4$ in each $\{ x: 0<|x-\lambda p_k|<1\}$; for $\mu_0\leq  3/4$, this metric contains an asymptotic end and a neighborhood of the minimal sphere (horizon), located at  $|x-\lambda p_k|= \widehat m_k/2 \leq 1/2$.  The construction is completed by applying Proposition \ref{prop:sm-mass} with $R_0=3/4$, translated to $|x-\lambda p_k|\geq 3/4$, and $\mu_1$ sufficiently small depending on $\varepsilon$, and for which the Schwarzschild interiors will contain a large neighborhood of the minimal sphere) to obtain  suitable metrics to fill in around each $\lambda p_k$.  The estimate $|\partial^I_x(g_{ij}(x) - \delta_{ij})|\leq Cm_k$ for $ |x-\lambda p_k|\leq 2$ follows from Theorem ~\ref{thm:mainlemma} and the following proof of Proposition ~\ref{prop:sm-mass}. Completeness follows readily, since any sequence $x_k$ with $|x_k|\rightarrow \infty$ is readily seen to be unbounded in the metric $g$.\end{proof}

What remains is to prove Proposition~\ref{prop:sm-mass}.  To do this, we show that the constructions in \cite{cd:as-pen, cor:as-pen} yield the existence of a family scalar-flat metrics parametrized by small $m>0$, converging to the Euclidean metric everywhere  as $m\searrow 0$, and which outside a fixed ball agree with the Schwarzschild metric $g^m_S$.  To be precise, the cited constructions give the existence of a sequence of small masses tending to zero, but with a modicum of extra care, we can show that a continuum of all sufficiently small masses are represented.  The proof below extends to all $n\geq 3$, but we focus on the $n=3$ case for simplicity of exposition.

To carry out the proof of Proposition~\ref{prop:sm-mass}, we will want to effectively control the mass of a parametrized family of scalar-flat perturbations of the Euclidean metric. This is done by first considering a path of metrics parameterized by $\sigma$ passing through the flat metric at $\sigma = 0$ and satisfying two additional properties: moving along this path does not change the scalar curvature to first order, on the one hand, while on the other hand it \emph{does} change the mass to second order. We may then make the scalar curvature vanish identically using a further conformal change, and we get estimates on the conformal factor in terms of this mass. We can then apply Proposition~\ref{prop:def0} and modulate the mass in a gluing scheme to a Schwarzschild end in such a way that we effectively track the dependence of the scalar curvature on the parameter $\sigma$, the parameter $m$ of the end, and a scale parameter $R$.  Continuity in $\sigma$ and $m$ allows us to conclude that we can achieve all positive masses in a sufficiently small neighborhood of $0$.

\begin{proof}[Proof of Proposition \ref{prop:sm-mass}]
Let $\mathring g$ be the Euclidean metric on $\mathbb R^3$ given by $\mathring g_{ij}(x)=\delta_{ij}$. To begin, with respect to the background Euclidean metric $\mathring g$, we construct a nontrivial smooth  transverse-traceless (TT) (divergence-free and trace-free) symmetric tensor $h$ with compact support. One can readily construct such a tensor; for example a very concrete proof is given in \cite{cor:as-pen} for a smooth TT tensor $\mathring h$ of compact support on Euclidean $\mathbb R^n$, and since the conditions are linear and closed under pullback by the parity operator $x\mapsto -x$, and under translation by a constant vector, for suitable $c$, we let $h$ be given in Cartesian components by $h_{ij}(x) = \mathring h_{ij}(x-c) + \mathring h_{ij}(-x-c)$, which is parity-symmetric; if $\mathring h$ were not parity-antisymmetric to start, we could take $c=0$.  Note that the linearization of the scalar curvature operator at the Euclidean metric vanishes in the direction $h$ by the TT condition: $Lh=-\Delta (\mathrm{tr}(h)) + \mathrm{div}(\mathrm{div}(h))=0$.

For each $\sigma\in \mathbb R$ with $|\sigma|$ sufficiently small, consider the metric $\gamma^\sigma$ given in Cartesian coordinates as $\gamma^\sigma_{ij}= \delta_{ij} + \sigma h_{ij}$. For any $k$, there is a $C$ such that for all such $\sigma$ and all $|I|\leq k$, $|\partial_x^I R(\gamma^\sigma)|\leq C\sigma^2$.  As such, for $|\sigma|$ small, we solve for $1+v^\sigma=u^\sigma>0$ and tending to 1 at infinity with $\Delta_{\gamma^{\sigma}} u^\sigma= \tfrac18 R(\gamma^\sigma) u^\sigma$, which means $R((u^{\sigma})^4 \gamma^\sigma)=0$.  (One can solve in appropriate weighted spaces such as in \cite{ba:mass, meyers}: the operator $\Delta_{\gamma^{\sigma}}$ is an isomorphism in such spaces, and $\Delta_{\gamma^{\sigma}}-\tfrac18 R(\gamma^\sigma)$ is a small perturbation, and hence is an isomorphism, for small $|\sigma|$, cf. Section~\ref{sec:multi-h}.) By uniqueness and parity-symmetry, $u^\sigma(x)=u^{\sigma}(-x)$.  Moreover, $u^\sigma$ is Euclidean-harmonic at infinity, and as such possesses a spherical harmonic expansion, $$u^\sigma(x)= 1+ \frac{m(\sigma)}{2|x|} + O(|x|^{-3}),$$ where coefficients of the parity-antisymmetric terms $x^i|x|^{-3}$ in the expansion must vanish.  It is elementary to see that $(u^\sigma)^4\gamma^\sigma$ is asymptotically flat, and $$16\pi m(\sigma) = - \int\limits_{\mathbb R^3} R(\gamma^\sigma)u^\sigma \; d\mu_{\gamma^{\sigma}}.$$
Since $Lh=0$, it is not hard to see $m'(0)=0$ (which also follows from the Positive Mass Theorem) and an elementary calculation \cite[Prop. 3.1]{cor:as-pen} yields the second derivative $m''(0)= \tfrac{1}{32\pi} \int\limits_{\mathbb R^3} |\nabla h|^2\; dx$. Thus there are positive constants $c_1$ and $c_2$ such that for $|\sigma|$ small, $c_1 \sigma^2 \leq m(\sigma) \leq c_2 \sigma^2$. 

Let $\rho\geq 1$ be smooth, with $\rho(x)= |x|$ for $|x|\geq 2$ (alternatively, $\rho(x)= \sqrt{1+|x|^2}$).  By the above estimate of $R(\gamma^\sigma)$ and expansion using the fundamental solution of the Laplacian (cf., e.g., \cite[Thm. 3]{meyers}),
\begin{align} \sum\limits_{|I|\leq k}  \sup\limits_{x\in \mathbb R^3} \rho(x)^{1+|I|}|\partial^I_x v^\sigma(x) |\leq C_k \sigma^2\leq C_k c_1^{-1} m(\sigma). \label{eq:wexs} \end{align}  In fact, by elementary expansion of the fundamental solution $|x-y|^{-1}$, and using that $h(y)$ is supported on some $|y|\leq K$, it follows that there is a constant $C$ such that for $R$ sufficiently large and $|x|\geq R$, and with $w^{\sigma}= v^{\sigma} - \frac{m(\sigma)}{2|x|}$,  
\begin{align}
|x|^2|w^{\sigma} (x) | + |x|^3 |\partial_x w^\sigma(x)| \leq C\sigma^2.
\label{eq:exp2} \end{align} In fact, by parity-symmetry we actually have another power of $|x|$ decay.  Thus $\widehat{g}^\sigma:=( u^{\sigma})^4\gamma^\sigma$ is uniformly close to the background Euclidean metric, on compact sets (and in fact in a weighted space globally).  Moreover it is readily seen that $\widehat{g}^\sigma$ is smooth in $\sigma$ (measured in a Sobolev or H\"older norm, weighted as appropriate if desired). By explicit calculation, 
\begin{align} 
\int\limits_{\{|x|=R\}}  \sum\limits_{i, j=1}^3( (g^{m}_S)_{ij,i}- (g^m_S)_{ii,j} )\nu^j dA =16 \pi m+ O(m^2 R^{-1}) \label{eq:bdim} \end{align}
so that using (\ref{eq:wexs})-(\ref{eq:exp2}), we find  
\begin{align} \int\limits_{\{|x|=R\}}  \sum\limits_{i, j=1}^3( (\widehat{g}^{\sigma})_{ij,i}- (\widehat{g}^{\sigma})_{ii,j} )\nu^j dA &=16 \pi m(\sigma)+ O(\sigma^2 R^{-1}). \label{eq:bdim2} \end{align}
The error estimates hold uniformly in $R$ large, and $(\sigma , m)\in J\times J'$, where $J$ is an interval of the form $[-\sigma_0, \sigma_0]$, for $0<\sigma_0<1$ sufficiently small and $J'$ is a bounded interval including $0$.

We will patch our data over a large annulus $\{x:R<|x|<2R\}$ to a Schwarzschild end with mass $m$, and then rescale to the unit annulus $\Omega=\{x:1<|x|<2\}$, with an eye toward invoking Proposition \ref{prop:def0}. We will thus have three parameters $(\sigma, m, R)$, and for $(\sigma, m)\in J\times J'$, for sufficiently large $R$, we will be able to estimate the leading order behavior of various quantities up to sufficiently small errors. (As will become clear by the end of the proof, there are alternate ways to utilize the parameter space effectively, say fixing $R$ and scaling $\sigma$ and $m$ suitably, but the approach we take seems straightforward enough.)  

To carry this out, for $R>0$, let $\phi_R:\mathbb R^3\rightarrow \mathbb R^3$ be given by $\phi_R(x)= Rx$.  Let $\psi_R (x) = \psi(x/R)$, and let $\tilde g:= \tilde g^{\sigma, m, R}$ be given by $\tilde g  = R^{-2} \phi_R^* ((1-\psi_R)\widehat{g}^{\sigma} + \psi_R g^m_S)$.  There is a constant $C>0$ depending on $k$ such that for large $R$, $(\sigma , m)\in J\times J'$, $|I|\leq k$, and all $x\in \Omega$, $|\partial^I_x( \tilde g_{ij}(x) - \delta_{ij})|\leq C R^{-1}$.  As $\tilde g$ is uniformly close to the Euclidean metric, we can apply Proposition \ref{prop:def0} to obtain $\tilde h:= \tilde h^{\sigma, m, R}$ smooth, with support on $\overline{\Omega}$, such that $R( \tilde g+ \tilde h)\in \mathrm{span}\{ \zeta, \zeta x^1, \zeta x^2, \zeta x^3\}$, where as above $\zeta$ is a rotationally symmetric bump function vanishing outside $\Omega$. Moreover, we have the estimate $\|\tilde h\|_{C^{2}}\leq C\|R(\tilde g)\|_{C^1}$ for $C$ uniform in $(\sigma, m)\in J \times J'$ and $R$ large; more precisely, for any $\alpha\in (0,1)$, $\| \tilde h\|_{C^{2,\alpha}}\leq C\|R(\tilde g)\|_{C^{0,\alpha}}$.  On the other hand, by parity symmetry the components in the linear direction must vanish: $R(\tilde g^{\sigma, m,R}+\tilde h^{\sigma , m ,R})= b(\sigma, m,R) \zeta$.  Furthermore,  $\tilde h=\tilde h^{\sigma, m,R}$ and the coefficient $b$ are continuous in $(\sigma, m,R)$.  As $\|\tilde g^{\sigma, m, R}- \tilde g^{\sigma, m', r}\|_{C^{4, \alpha}}= O (|m-m'|/R)$ is readily seen, then $\|\tilde h^{\sigma, m, R}-\tilde h^{\sigma, m', R}\|_{C^{2, \alpha}} \leq C|m-m'|/R$ follows from the Lipschitz bound in Proposition~\ref{prop:def0}.

$R(\tilde g)$ can be estimated by direct expansion.  Indeed, $\tilde g$ is conformally flat, with $\tilde g_{ij}= U \delta _{ij}$, where $U(x)=(1-  \psi(x)) (1+v^{\sigma}(Rx))^4+\psi (x) ( 1+ \tfrac{m}{2|Rx|})^4$.  As such, \begin{align} R(\tilde g)= -2U^{-2} \sum\limits_{j=1}^3 U_{,jj}+ \frac32 U^{-3} |dU|^2. \label{eq:RUex} \end{align}  Moreover  $W(x):=U (x)  - ( 1+ \tfrac{m/R}{2|x|} )^4$ can be written
\begin{align*}  (1-\psi) \Big( (v^{\sigma}(Rx)-\tfrac{m/R}{2|x|})(2+v^{\sigma}(Rx)+\tfrac{m/R}{2|x|})\big((1+v^{\sigma}(Rx))^2 + (1+\tfrac{m/R}{2|x|})^2\big)\Big),
\end{align*} 
so that using (\ref{eq:wexs}) we find a constant $C$ so that for all $(\sigma, m)\in J\times J'$ and $R\geq 1$ sufficiently large (so that in particular $mR^{-1}\leq 1$),
\begin{align} 
|W(x)|+\big|\partial W (x) \big|+ \big|\partial^2 W (x) \big| & \leq C (m+ \sigma^2)R^{-1}  \nonumber\\
|\partial^2  U | & \leq  C (m+ \sigma^2)R^{-1}  \label{eq:Uexp} \\
|\partial U|^2 & \leq C (m^2+ \sigma^4)R^{-2}. \nonumber\end{align}
We readily conclude $\|R(\tilde g)\|_{C^1} = O (m+ \sigma^2 )R^{-1}$, and thus $\|\tilde h\|_{C^{2, \alpha}} = O (m+ \sigma^2 )R^{-1}.$

We let $\bar g=\bar g^{\sigma, m, R}=\tilde g^{\sigma, m, R}+ \tilde h^{\sigma, m, R}=\tilde g+ \tilde h$, and we make the following claim: 
\begin{lemma} $F(\sigma, m, R):= \int \limits_\Omega R(\bar g)\; dx=16 \pi (m-m(\sigma))R^{-1} + \mathcal E$, where for some constant $C>0$, $\mathcal E$ is continuous in $(\sigma, m , R)$, with $|\mathcal E|\leq C (m^2+ \sigma^2)R^{-2}$.  Moreover, we have $|\mathcal E(\sigma, m, R)-\mathcal E(\sigma, m', R)|\leq C |m-m'|R^{-2}$. 
\end{lemma}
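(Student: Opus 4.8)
\emph{Overview of the approach.} The plan is to expand $R(\bar g) = R(\tilde g) + L_{\tilde g}(\tilde h) + \mathcal Q_{\tilde g}(\tilde h)$ and integrate over $\Omega$, so that $\int_\Omega R(\tilde g)\, dx$ supplies the leading term $16\pi(m-m(\sigma))R^{-1}$ while the other two contributions, together with the errors from the first, form $\mathcal E$. The point to watch is that $\tilde h$ is only $O(R^{-1})$ in $C^2$, so a naive estimate of $\int_\Omega L_{\tilde g}(\tilde h)\,dx$ gives an error of size $O(R^{-1})$; the improvement to $O(R^{-2})$ will come from integrating $L_{\tilde g}(\tilde h)$ against the function $1$, which lies in the kernel of $L_{\mathring g}^*$ and hence makes $L_{\tilde g}^*1 = -\Ric(\tilde g)$ small.

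\emph{The main term.} Since the transverse--traceless tensor $h$ has compact support, for $R$ large the image $\phi_R(\Omega) = \{R < |y| < 2R\}$ is disjoint from $\mathrm{supp}\,h$, so $\widehat g^\sigma = (u^\sigma)^4\mathring g$ there and hence $\tilde g = U\mathring g$ is conformally flat on $\Omega$ with $U$ as in \eqref{eq:RUex}. Using \eqref{eq:RUex}, writing $U^{-2} = 1 + (U^{-2}-1)$, and applying the divergence theorem gives
\[
\int_\Omega R(\tilde g)\, dx = -2\int_{\partial\Omega}\partial_\nu U\, dA - 2\int_\Omega (U^{-2}-1)\Delta U\, dx + \tfrac32\int_\Omega U^{-3}|\nabla U|^2\, dx.
\]
On $\{|x|=2\}$ one has $U = (1+\tfrac{m}{2R|x|})^4$ exactly, and on $\{|x|=1\}$ one has $U = (1+v^\sigma(R\,\cdot))^4$; substituting the expansion $v^\sigma(y) = \tfrac{m(\sigma)}{2|y|} + w^\sigma(y)$ and the decay of $w^\sigma$ from \eqref{eq:exp2} evaluates the boundary flux as $-2\int_{\partial\Omega}\partial_\nu U\, dA = 16\pi(m-m(\sigma))R^{-1} + O((m^2+\sigma^2)R^{-2})$. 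For the two bulk terms, $U = 1 + O((m+\sigma^2)R^{-1})$ together with the bounds \eqref{eq:Uexp} on $|\partial^2 U|$ and $|\partial U|^2$ makes them $O((m^2+\sigma^2)R^{-2})$ after integrating over the bounded set $\Omega$. Hence $\int_\Omega R(\tilde g)\, dx = 16\pi(m-m(\sigma))R^{-1} + O((m^2+\sigma^2)R^{-2})$.

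\emph{The remaining terms and continuity.} Because $\tilde h$ extends smoothly by zero across $\partial\Omega$, integrating by parts and using $L_{\tilde g}^*1 = -\Ric(\tilde g)$ gives $\int_\Omega L_{\tilde g}(\tilde h)\, d\mu_{\tilde g} = -\int_\Omega\langle\Ric(\tilde g),\tilde h\rangle_{\tilde g}\, d\mu_{\tilde g}$; as $\tilde g$ is conformally flat with $|\partial^2 U| = O((m+\sigma^2)R^{-1})$ we have $|\Ric(\tilde g)| = O((m+\sigma^2)R^{-1})$, and combined with $\|\tilde h\|_{C^{2,\alpha}} = O((m+\sigma^2)R^{-1})$ this integral is $O((m+\sigma^2)^2R^{-2})$, with the passage from $d\mu_{\tilde g}$ to $dx$ costing only a further factor $O((m+\sigma^2)R^{-1})$. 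Likewise $|\mathcal Q_{\tilde g}(\tilde h)| \le C\|\tilde h\|_{C^2}^2 = O((m+\sigma^2)^2R^{-2})$, so its integral is of the same order. Since $(m+\sigma^2)^2 \le C(m^2+\sigma^2)$ on the bounded parameter set $J\times J'$, collecting everything proves the first assertion with $|\mathcal E| \le C(m^2+\sigma^2)R^{-2}$; continuity of $\mathcal E$ in $(\sigma,m,R)$ follows from the continuous dependence of $\tilde g^{\sigma,m,R}$ on the parameters, the continuity of $\gamma\mapsto h(\gamma)$ from Proposition~\ref{prop:def0}, and the smoothness of $\sigma\mapsto m(\sigma)$.

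\emph{The Lipschitz estimate in $m$.} Set $g_j = \bar g^{\sigma,m_j,R}$ with $m_1 = m$, $m_2 = m'$, write $\tilde g_j$, $\tilde h_j$ for the corresponding pieces $\tilde g^{\sigma,m_j,R}$, $\tilde h^{\sigma,m_j,R}$, and put $k = g_1 - g_2$. The stated bounds $\|\tilde g^{\sigma,m,R} - \tilde g^{\sigma,m',R}\|_{C^{4,\alpha}} = O(|m-m'|R^{-1})$ and $\|\tilde h^{\sigma,m,R} - \tilde h^{\sigma,m',R}\|_{C^{2,\alpha}} = O(|m-m'|R^{-1})$ give $\|k\|_{C^{2,\alpha}} = O(|m-m'|R^{-1})$, and $\int_\Omega[R(g_1)-R(g_2)]\,dx = \int_\Omega L_{g_2}(k)\,dx + \int_\Omega\mathcal Q_{g_2}(k)\,dx$ with the quadratic term $O(|m-m'|^2R^{-2})$. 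Split $k = (\tilde g_1-\tilde g_2) + (\tilde h_1-\tilde h_2)$: the second summand vanishes with its gradient on $\partial\Omega$, so the integration-by-parts argument above bounds its contribution by $O(|m-m'|R^{-2})$; the first summand does \emph{not} vanish on $\partial\Omega$, so instead re-expand $R(\tilde g_1) = R(\tilde g_2) + L_{\tilde g_2}(\tilde g_1-\tilde g_2) + \mathcal Q_{\tilde g_2}(\tilde g_1-\tilde g_2)$, identify $\int_\Omega L_{\tilde g_2}(\tilde g_1-\tilde g_2)\,dx$ with $\int_\Omega R(\tilde g_1)\,dx - \int_\Omega R(\tilde g_2)\,dx$ up to a quadratic error, and observe from the main-term computation that the $m$-dependence of $\int_\Omega R(\tilde g_m)\,dx$ resides in the boundary flux over $\{|x|=2\}$, which is affine in $m$, plus a bulk remainder Lipschitz in $m$ with constant $O(|m-m'|R^{-2})$; exchanging $L_{g_2}$ for $L_{\tilde g_2}$ costs $O(\|\tilde h_2\|_{C^2}\|\tilde g_1-\tilde g_2\|_{C^2}) = O(|m-m'|R^{-2})$. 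As $m(\sigma)$ is independent of $m$, this yields $F(\sigma,m,R)-F(\sigma,m',R) = 16\pi(m-m')R^{-1} + O(|m-m'|R^{-2})$, equivalently $|\mathcal E(\sigma,m,R)-\mathcal E(\sigma,m',R)| \le C|m-m'|R^{-2}$. The main technical obstacle, flagged above, is precisely this need to gain an extra power of $R^{-1}$ by testing $L$ against the cokernel element $1$, compounded in the Lipschitz estimate by the fact that $\tilde g_1-\tilde g_2$ fails to vanish on $\partial\Omega$ and so must be routed through $R(\tilde g_1)-R(\tilde g_2)$.
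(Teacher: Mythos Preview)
Your proof is correct and follows essentially the same strategy as the paper's: expand $R(\bar g)=R(\tilde g)+L_{\tilde g}\tilde h+\mathcal Q_{\tilde g}(\tilde h)$, extract the mass difference from the boundary flux of $\int_\Omega R(\tilde g)\,dx$, and gain the extra $R^{-1}$ on the linear piece by testing against the constant function $1$. The minor differences are cosmetic: the paper passes from $R(\tilde g)$ to the ADM integrand $\sum(\tilde g_{ij,ij}-\tilde g_{ii,jj})$ and invokes \eqref{eq:bdim}--\eqref{eq:bdim2}, whereas you compute the flux of $-2\Delta U$ directly; and for the $L_{\tilde g}\tilde h$ term you (correctly) keep the Ricci pairing $-\int\langle\Ric(\tilde g),\tilde h\rangle\,d\mu_{\tilde g}$ and bound it by $O(R^{-2})$, while the paper writes $\int L_{\tilde g}\tilde h\,d\mu_{\tilde g}=0$ (which drops this same $O(R^{-2})$ term). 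Your Lipschitz argument is considerably more explicit than the paper's one-line appeal to the Lipschitz bound on $\tilde h^{\sigma,m,R}$, but it reaches the same conclusion; one small wording slip is that the outer boundary flux is not literally affine in $m$ but rather $16\pi m R^{-1}$ plus a smooth remainder whose $m$-derivative is $O(R^{-2})$, which is what you actually need.
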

\begin{proof} The proof is a straightforward calculation using the above estimates.  We begin with the expansion of the scalar curvature $R(\bar g)= R(\tilde g)+ L_{\tilde g} \tilde h + \mathcal Q_{\tilde g}(\tilde h)$.  We include the $Q_{\tilde g}(\tilde h)$-term in $\mathcal E$ by the estimate we have on $\tilde h$.  As for $L_{\tilde g} \tilde h$, we write 
$$\int\limits_\Omega   L_{\tilde g} \tilde h \; dx=\int\limits_\Omega   L_{\tilde g} \tilde h \; (dx- d\mu_{\tilde g})$$
where we integrated by parts to get $\int\limits_\Omega  L_{\tilde g} \tilde h \; d\mu_{\tilde g}=0$.  This can also put placed into $\mathcal E$ by estimating the difference of the measures: $$|dx-d\mu_{\tilde g}|/dx = |1-U^{3/2}|\leq C(m+\sigma^2)R^{-1}.$$  Finally, we use (\ref{eq:RUex})-(\ref{eq:Uexp}) to replace $\int\limits_\Omega R(\tilde g)\; dx $ by $\int\limits_\Omega \sum\limits_{i, j=1}^3 ( \tilde g_{ij,ij}-\tilde g_{ii,jj})\; dx$, with the difference contributing another term to $\mathcal E$, and then we apply (\ref{eq:bdim})-(\ref{eq:bdim2}) to obtain the difference in the masses, up to an acceptable error term.  We have $|\mathcal E(\sigma, m, R)-\mathcal E(\sigma, m', R)|\leq C |m-m'|R^{-2}$ by the Lipschitz bound on $\tilde h^{\sigma, m, R}$.  \end{proof}  As such, $F$ is Lipschitz, and hence absolutely continuous in $m$, and so $$F(\sigma, m_2, R)-F(\sigma, m_1, R)= \int\limits_{m_1}^{m_2} \frac{\partial F}{\partial m} \; dm.$$ As $\frac{\partial F}{\partial m} >0$ a.e. for $R$ sufficiently large, we conclude $F$ is increasing in $m$. 

For each $\sigma\neq 0$, let $J'_\sigma= [m(\sigma)/2, 2m(\sigma)]\subset (0, \infty)$, and recall $c_1 \sigma^2 \leq m(\sigma)$. For any fixed $R$ sufficiently large, independent of $|\sigma|$ small, we have $F(\sigma, 2m(\sigma), R)>0$, while $F(\sigma, m(\sigma)/2, R)<0$.  By continuity,   
    $F(\sigma, M(\sigma), R)=0$ for some $M(\sigma)\in J'_\sigma$.  By the monotonicity, this $M(\sigma)$ is uniquely determined.  Moreover $\frac{\partial F}{\partial m}\geq c>0$, so that $|F(\sigma, M(\sigma'), R) |= \left|\int\limits_{M(\sigma)}^{M(\sigma')} \frac{\partial F}{\partial m}\; dm\right| \geq c|M(\sigma')-M(\sigma)|$. Letting $\sigma\rightarrow \sigma'$, we see $M(\sigma)\rightarrow M(\sigma')$, since $F$ is continuous and $F(\sigma', M(\sigma'), R)=0$. 
    
   The above scheme allows us to construct a metric which outside a ball of sufficiently large radius is identically Schwarzschild of mass $m$. By rescaling, we can accommodate any radius $R_0>0$.
    \end{proof}


\section{Vacuum initial data sets with infinitely many minimal spheres}  
\label{sec:multi-h}

In the previous section we completed the proof of Theorem ~\ref{thm:main} by smoothing out the punctured regions of Brill--Lindquist data with ``small" data, and in so doing we removed any Schwarzschild horizons in $\{x:|x-c_k|<1\}$.   In this section we prove Proposition~\ref{prop:multi-h} by modifying the above construction to obtain scalar-flat initial data on $\mathbb R^3$ with infinitely many minimal spheres.  We will employ the method of Miao \cite{miao-1h}, who constructed such data with a single minimal sphere by a method different than that of Beig-\'O Murchadha \cite{beig-om-trap}; Miao's method was extended to a finite number of minimal spheres in \cite{jc-mh}.

To prove Proposition~\ref{prop:multi-h}, we first construct metrics which are flat in some ball, which agree with an exterior end of a Schwarzschild (including a neighborhood of the minimal sphere) outside of a larger ball, and have positive (and potentially large) scalar curvature in a transition region.  Following Miao \cite{miao-1h}, we then use a result of Lohkamp \cite[Thm. 1]{loh-ham} in order to push the scalar curvature down to near $0$ in the transition region. Adjusting the scalar curvature to be identically $0$ is accomplished with a conformal change in a perturbative setting, as Lohkamp's result gives a change in metric which is small in $C^0$.  As such, the resulting metric will still admit a minimal sphere near that of the original Schwarzschild end. 

Because the scalar curvature of the resulting metric is identically $0$, we can then follow the proof of Theorem~\ref{thm:main}, as we are now in a position to apply Theorem~\ref{thm:mainlemma}. We do so in order to conclude the proof, with the slight modification that we modulate using a rescaling parameter $\theta$ instead of the Schwarzschild mass directly.

For use in the proof, we recall the definition of weighted Sobolev spaces, following \cite{ba:mass}.  As we had earlier, let $\rho\geq 1$ be a smooth function on $\mathbb R^3$ for which $\rho(x)= |x|$ for $|x|\geq 2$.  Then for $k$ a nonnegative integer, $\tau\in \mathbb R$ and $p>1$, we have weighted Sobolev spaces $W^{k,p}_{-\tau}=W^{k,p}_{-\tau}(\mathbb R^3)$ given by the norm
$$\|u\|_{W^{k,p}_{-\tau}}^p= \sum\limits_{j=0}^k \sum\limits_{|I|=j} |\partial^I u|^p \rho^{\tau p -3} \; dx.$$  

\begin{proof}[Proof of Proposition~\ref{prop:multi-h}] Let $0<m<1$, so that the minimal sphere in $(g_S^m)_{ij}(x)= (1+\tfrac{m}{2|x|})^4 \delta_{ij}$ is located at $|x|= \tfrac{m}{2}<\tfrac12$.   Fix some $0<r_0<\tfrac12$, and consider the function $$w(x)= \begin{cases} 1+\tfrac{m}{2r_0} , \qquad |x|< r_0 \\ 1+ \tfrac{m}{2|x|}, \qquad |x|\geq r_0\end{cases}.$$  Then $w$ is a Euclidean superharmonic function, and for small $\eta>0$, a mollification $w_\eta$ of $w$ by a rotationally symmetric mollifier with small enough support is then readily seen yield a smooth metric $w_\eta^4 \delta_{ij}$ with positive scalar curvature in an annulus $r_0-\eta<|x|<r_0+\eta$, and vanishing scalar curvature outside this region, agreeing with a flat metric near the origin, and with the original Schwarzschild metric in a neighborhood of $|x|>\tfrac{m}{2}$.  As in \cite{miao-1h}, we now apply a result Lohkamp \cite{loh-ham} which gives for any $\varepsilon>0$, a $C^0$-nearby smooth metric $g^{\varepsilon}$ with $-\varepsilon\leq R(g^{\varepsilon})\leq 0$ on $r_0-\eta-\varepsilon<|x|<r_0+\eta+\varepsilon$, and with $g^{\varepsilon}_{ij}= w_\eta^4\delta_{ij}$ outside this region.  

We make a slight twist on the argument of Miao at this point.  For $\theta>0$ we consider the map $\varphi_\theta(x) = \theta x$, and let $g_\theta= \theta^{-2} \varphi_\theta^* g^{\varepsilon}$.  We have $R(g_\theta)= \theta^{2} R(g^{\varepsilon})$, so that $-\theta^{2} \varepsilon \leq R(g_\theta)\leq 0$, while for $|x|>\theta^{-1} (r_0+\eta+\varepsilon)$, $(g_\theta)_{ij}(x) = (1+\tfrac{m/\theta}{2|x|})^4 \delta_{ij}$.   Thus for $\eta+\varepsilon$ small and $\theta $ near 1, $g_\theta$ agrees with $g_S^{m/\theta}$ on an open set which contains $\{ x: |x|\geq \tfrac{m/\theta}{2}\} \supset \{x:1\leq |x|\leq 2\}=\overline{\Omega}$, so that in particular it contains the minimal sphere.  

For $0<\tau<1$ and $p>1$, the operator $\Delta_{g_\theta}-\tfrac18 R(g_\theta):W^{2, p}_{-\tau} \rightarrow W^{0,p}_{-\tau-2}$ is a compact perturbation of the isomorphism $\Delta_{g_\theta}$,  and as such it has Fredholm index zero \cite{ba:mass}.    If $w\in W^{2, p}_{-\tau}$ with $(\Delta_{g_\theta}-\tfrac18 R(g_\theta) )(w)=0$, multiplying by $w$ and integrating by parts we obtain $$0=\int\limits_{\mathbb R^3} (w\Delta_{g_\theta}w-\tfrac18 R(g_\theta)w^2) \; d\mu_{g_\theta}=- \int\limits_{\mathbb R^3} (|\nabla w|_{g_\theta}^2 +\tfrac18 R(g_\theta)w^2) \; d\mu_{g_\theta}$$
where the boundary term vanishes due to the decay of $w$.  In fact since $R(g_\theta)=0$ for $|x|\geq r_0+\eta+\varepsilon$, on this region $\Delta w \in W^{0,p}_{-\tau-3}$.  As such, it admits an expansion, for some constants $C$, $A$ and $\gamma>0$  \begin{align*} \Big| w(x)-\frac{A}{|x|}\Big| + |x| \Big| \partial \Big(w(x)-\frac{A}{|x|}\Big)\Big| \leq C|x|^{-1-\gamma}\end{align*} 
as in \cite[Thm. 1.17]{ba:mass} (with an additional step to get the decay of the derivative), cf. \cite[Lemma 3.2]{sy:pmt} or \cite[Thm. 2]{meyers}.  Applying the H\"older inequality, and the Sobolev inequality $\|w\|_{L^6}^2 \leq C \|\nabla w\|^2_{L^2}$, with a uniform $C$ for $\varepsilon$ small and $\theta$ near $1$ (recall that Lohkamp's deformation is $C^0$-small), we obtain
$$\|\nabla w\|^2_{L^2} \leq\|\tfrac18 R(g_\theta)\|_{L^{3/2}} \|w\|^2_{L^6} \leq C\|\tfrac18 R(g_\theta)\|_{L^{3/2}}\|\nabla w\|^2_{L^2}\leq C'\theta^2 \varepsilon \|\nabla w\|^2_{L^2}. $$ Thus for $\theta$ near $1$ and $\varepsilon$ small, we conclude $w=0$, and hence that $\Delta_{g_\theta}-\tfrac18 R(g_\theta)$ is an isomorphism.

 We want to solve for a positive function $u=u_\theta$ close to $1$, and so that $u(x)$ approaches 1 as $|x|\rightarrow \infty$, with $R(u^4 g_\theta)=0$, i.e. $ \Delta_{g_\theta}u-\tfrac18 R(g_\theta)u=0$. We let $v=u-1$, so that the equation becomes $\Delta_{g_\theta}v-\tfrac18 R(g_\theta)v= \tfrac18 R(g_\theta)$. Since the operator $\Delta_{g_\theta}-\tfrac18 R(g_\theta)$ is an isomorphism, we can solve for $v\in W^{2,p}_{-\tau}$.   For $\theta$ near $1$ and $\varepsilon$ sufficiently small, we show that $u$ is close to $1$ in $C^2(\overline{\Omega})$, and a barrier argument as in \cite{miao-1h} guarantees there is a minimal sphere in $(\Omega, u^4 g_\theta)$.
 
We proceed to get estimates on $v=u-1$.  Since $R(g_\theta)=0$ for $|x|\geq r_0+\eta+\varepsilon$ where the barrier argument is applied, on this region $(1+v(x))(1+\frac{m/\theta}{2|x|})$ is a Euclidean harmonic function which tends to 1 as $|x|\rightarrow \infty$. As such, it admits an expansion in spherical harmonics: \begin{align*} (1+v(x))\Big(1+\frac{m/\theta}{2|x|}\Big) & = 1+ \frac{m_\theta}{2|x|} + O(|x|^{-2})\end{align*} so that $v(x) = \frac{m_\theta-m/\theta}{2|x|}+ O(|x|^{-2})$ and $|\partial v(x)|= O(|x|^{-2})$. Using these decay rates, we get integral estimates on $v$ by multiplying the differential equation by $v$ and integrating by parts as we did above,  
and applying H\"{o}lder's inequality (and the arithmetic-geometric mean inequality) to obtain for any $\delta>0$ (norms taken on $(\mathbb R^3, d\mu_{g_\theta})$),
\begin{align*}
\|\nabla v\|^2_{L^2} &\leq\|\tfrac18 R(g_\theta)\|_{L^{3/2}} \|v\|^2_{L^6}+ \|\tfrac18 R(g_\theta)\|_{L^{6/5}}\|v\|_{L^6}\\
&\leq \|\tfrac18 R(g_\theta)\|_{L^{3/2}} \|v\|^2_{L^6}+ \tfrac{1}{2\delta}\|\tfrac18 R(g_\theta)\|^2_{L^{6/5}}+ \tfrac{\delta}{2} \|v\|^2_{L^6}.
\end{align*}
Applying the Sobolev inequality $\|v\|_{L^6}^2 \leq C \|\nabla v\|^2_{L^2}$, with a uniform $C$ for $\varepsilon$ small and $\theta$ near $1$ as above, for $\delta$ sufficiently small, we can absorb the $\|v\|_{L^6}^2$ terms to obtain, where $C$ and $C'$ are uniform for $\varepsilon$ small and $\theta$ near 1:
$$\|v\|_{L^6} \leq C\| R(g_\theta)\|_{L^{6/5}}\leq C' \theta^2 \varepsilon .$$
By a DeGiorgi-Nash-Moser estimate \cite[Thm, 8.17]{gt:pde},  $$|v(x)|\leq C (\|R(g_\theta)\|_{L^3}+ \|v\|_{L^6})\leq C' \theta^2 \varepsilon.  $$
The constant $C$ in the above estimate depends on ellipticity and boundedness constants for the divergence-form operators $\Delta_{g_\theta}$, which are uniform in $\varepsilon$ small and $\theta$ near 1, by the $C^0$-control on $g_\theta$.   Since on a neighborhood of $\overline{\Omega}$ we have $(g_\theta)_{ij}=  (1+\frac{m/\theta}{2|x|})^4\delta_{ij}$, interior Schauder estimates (cf. \cite[Thm. 6.2]{gt:pde}) for $\Delta_{g_\theta}v=0$ yield $\|v\|_{C^{2,\alpha}(\overline{\Omega})} \leq C \theta^2 \varepsilon.$

It is also not hard to show that $u=u_\theta=v_\theta+1$ depends continuously on $\theta$, as 
\begin{align*}
\Delta_{g_\theta} (u_{\tau} - u_{\theta})&= \Delta_{g_\tau} u_\tau - \Delta_{g_\theta} u_\theta + (\Delta_{g_\theta} - \Delta_{g_\tau})u_\tau\\
&= \tfrac18 R(g_\tau) u_\tau - \tfrac18 R(g_\theta) u_\theta+(\Delta_{g_\theta} - \Delta_{g_\tau})u_\tau\\
&= \tfrac18 R(g_\theta) (u_\tau-u_{\theta}) + \tfrac18 \big(R(g_\tau) -R(g_\theta)\big) u_\tau+(\Delta_{g_\theta} - \Delta_{g_\tau})u_\tau\end{align*}
so that $$ \Big(\Delta_{g_\theta} -\tfrac18 R(g_\theta)\Big) (v_\tau-v_{\theta})=\tfrac18 \big(R(g_\tau) -R(g_\theta)\big) u_{\Red{\tau}}+(\Delta_{g_\theta} - \Delta_{g_\tau})u_\tau.$$
Repeating the integration-by-parts argument sketched above, allows us to conclude that $u_\theta$ varies continuously in $L^6(\mathbb R^3)$, and then by the elliptic estimates in $C^{2,\alpha}(\overline{\Omega})$.

We now apply the argument in the proof of Theorem ~\ref{thm:main}, but instead of using a  Schwarzchild as the interior metric for the Brill--Lindquist exterior, we use the metrics $u_\theta ^4 g_{\theta}$ considered here.  As such, the computations from the proof of Theorem ~\ref{thm:mainlemma} are slightly modified as follows. 

Let $m_i=m$ for all $i\in \mathbb Z_+$.  There is a sequence $\mathfrak p$ as in Section~\ref{sec:BLest} and a $0<\xi<\tfrac12$, such that for any $\theta\in J'_\xi:=[1-\xi, 1+\xi]$, the metric 
$$
\widetilde g_{ij}(x) = (1- \psi(x))(u_\theta^4(x) g_\theta)_{ij}(x) + \psi(x)(g^{\mathfrak m, \mathfrak p}_{BL})_{ij}(x).
$$
is sufficiently near $g_S^m$ to apply Proposition~\ref{prop:def} with $\gamma=\widetilde g$ to solve for $h(\tilde g)$, satisfying the conditions therein.

We write
$$
\widetilde g= (1-\psi) g_{\theta}+\psi g_S^{m}+\widehat h   =:\widehat g + \widehat h,
$$ 
so that $\widehat h := (1-\psi)(u_\theta^4-1)g_\theta+\psi (g^{\mathfrak m, \mathfrak p}_{BL} - g^m_S)$ and $\widehat g=(1-\psi) g_{\theta}+\psi g_S^{m}$, which is $(1-\psi) g_S^{m/\theta}+\psi g_S^{m}$ on the set $\overline{\Omega}=\{x:1\leq |x|\leq 2\}$.  Now, $R(\widetilde g+h(\widetilde g))$ vanishes precisely when $$\int\limits_{\Omega} f^{m} R(\widetilde g+h(\widetilde g))\; d \mu_{g^m_S}=0.
$$
We note that
\begin{align*} 
R(\widetilde g+h(\widetilde g))&= R( \widehat g)+ L_{\widehat g}( \widehat h+h(\widetilde g))+\mathcal Q_{\widehat g} (\widehat h + h(\widetilde g))\\
&= R( \widehat g )+L_{g_S^m}(\widehat h+h(\widetilde g))\\
& \qquad  +(L_{\widehat g} - L_{g_S^m})(\widehat h + h(\widetilde g))+ \mathcal Q_{\widehat g}(\widehat h+h(\widetilde g)).
\end{align*}

The integral of the term $f^m L_{g^m_S}(h(\widetilde g))$ vanishes because $f^m$ is in the kernel of $L^*_{g^m_S}$, and $h(\widetilde g)$ and its derivatives vanish along the boundary.

We can estimate as we did earlier in the proof of Theorem~\ref{thm:mainlemma}, and we emphasize the constants in the estimates below are independent of $\theta\in J'_\xi$: for some $C>0$,
\begin{align*}
    \left| \int\limits_{\Omega} f^{m} L_{g_S^{m}}(\widehat h)d\mu_{g^m_S} \right| &\leq C \|\widehat h\|_{C^2(\Omega)}\\
    |\mathcal Q_{\widehat g}(\widehat h+h(\widetilde g))| &\leq C (\|\widehat h\|^2_{C^2(\Omega)}+ \|h(\widetilde g)\|^2_{C^2(\Omega)})\\
    |( L_{\widehat g}-L_{g_S^{m}}) (\widehat h+h(\widetilde g))|&\leq  C\|g^{m/\theta}_S-g^{m}_S\|_{C^2(\Omega)} \cdot(\|\widehat h\|_{C^2(\Omega)}+ \|h(\widetilde g)\|_{C^2(\Omega)})\\
    \|h(\widetilde g)\|_{C^{2,\alpha}(\Omega)}& \leq C_0 \|R(\widetilde g)\|_{C^{0,\alpha}(\Omega)} \leq C (|m-m/\theta|+\|\widehat h\|_{C^{2,\alpha}(\Omega)})
\end{align*}

Just as we did in (\ref{eq:mint}), we can readily compute $$\int\limits_{\Omega} f^{m} R( \widehat g )\; d\mu_{g^m_S}=16\pi (m-m/\theta)= 16\pi \theta^{-1} m(\theta-1). $$

Thus we conclude that there is a constant $C>0$ such that 
\begin{align*} 
\left|\int\limits_{\Omega} f^{m} R(\widetilde g+h(\widetilde g))\; d \mu_{g^m_S}- 16\pi(m-m/\theta)\right| &\leq C ((m-m/\theta )^2+  \|\widehat h\|_{C^{2,\alpha}(\Omega)}).
\end{align*}
Given $\xi$, we can choose $\mathfrak p$ and $\varepsilon$ so that $\|\widehat h\|_{C^{2,\alpha}(\Omega)} \leq C (\theta^2 \varepsilon + \xi^2)\leq C'\xi^2$.  
By Proposition \ref{prop:def}, $$\theta\mapsto \mathcal J(\theta):= \int\limits_{\Omega} f^{m} R(\tilde g+h(\tilde g))\; d \mu_{g^m_S}
$$ 
is continuous for $\theta \in J'_\xi$.  For $\xi$ sufficiently small, we then have by the above estimates that $\mathcal{J}(1-\xi) \cdot \mathcal{J} (1+\xi) < 0$. Thus by continuity, for some $\theta\in J'_\xi$, $\mathcal J(\theta)= 0$, as desired. \end{proof}

\begin{remark} Employing the above approach in the proof of Theorem~\ref{thm:main} would only retain the $C^0$ estimate on $g_{ij}-\delta_{ij}$.
\end{remark}


\section{On the total mass} \label{sec:totmass}

  For suitably asymptotically flat exterior ends, the ADM mass is given by the following formula: 
\begin{align}  
16\pi m_{\mathrm{ADM}}(g) &= \lim\limits_{r\rightarrow \infty} \int\limits_{\{|x|=r\}} \sum\limits_{i,j=1}^3 (g_{ij,i}-g_{ii,j}) \tfrac{x^j}{|x|} dA \nonumber\\
&= \lim\limits_{r\rightarrow \infty} \int\limits_{\{|x|=r\}} g^{ab}(g_{aj,b}-g_{ab,j}) \tfrac{x^j}{|x|} dA. \label{eq:altADM} \\
&= \lim\limits_{r\rightarrow \infty} \int\limits_{\{|x|=r\}} g^{ab}(g_{aj,b}-g_{ab,j}) \nu_g\; d\sigma_g, \label{eq:altADM} \nonumber \end{align}
where $\nu_g$ the outward-pointing hypersurface normal with respect to $g$.  The ADM mass is finite when $R(g)$ is integrable on the asymptotically flat end.  More precisely, the above defines the ADM \emph{energy}, one component of the ADM energy-momentum vector, but our time-symmetric setting, the linear momentum vanishes.

We prove the following proposition on the limit of the flux integrals for the data we construct.  Given a sequence $\mathfrak c=(c_k)_{k\in \mathbb Z_+}$ in $\mathbb R^3$, we let $B_k=\{x:|x-c_k|<2\}$, and for $r>0$, we then let $B_k^r=\{ x: |x-c_k|<r/10\}$, and let $B(r)=\{ x: |x| < r\}$.  We observe that for $\lambda \geq 1$ and $\mathfrak c= \lambda \mathfrak p$, where $\mathfrak p$ is as defined recursively in Section ~\ref{sec:BLest}, and for any $r>0$, the sets $\{ k: B_k \cap \partial B(r)\neq \varnothing\}$ and $\{ k: B_k^r \cap \partial B(r)\neq \varnothing\}$ have at most one element in them, and $|c_k-c_j|\geq \tfrac12 |c_j|$ for all $j<k$.

\begin{prop}  Consider a metric $g$ constructed according to Theorem~\ref{thm:main}, with mass sequence $\mathfrak m$ with $\|\mathfrak m\|_{\ell^\infty}\leq \varepsilon_0$ and corresponding center sequence $\mathfrak c= \lambda \mathfrak p$, with $\widehat \delta_k$ as in ((\ref{eq:sum-est}) chosen so that $(\widehat \delta_k)_{k\in \mathbb Z_+} \in \ell^1$, and with a constant $C>0$ such that for $x\in B_k $ and $0\leq |I|\leq 2$, $|\partial_x^I (g_{ij}(x) -\delta_{ij})|\leq Cm_k.$

In case $\mathfrak m\notin \ell^1$ and $ \#\{ k: B_k \cap \partial B(r)\neq \varnothing\}$ is uniformly bounded for $r>0$, then the limit in (\ref{eq:altADM}) is infinite.  As such, we conclude that since $R(g)=0$, $g$ is not suitably asymptotically flat.

In case $\mathfrak m\in \ell^1$, then the limit in (\ref{eq:altADM}) is finite.  In this case we have the following:  
\begin{enumerate} \item[$\bullet$] if $\sum\limits_{k\in \mathbb Z_+} \widehat \delta_k$ is suitably small, then the limit in (\ref{eq:altADM}) is strictly positive. 

\item[$\bullet$] if $\# \{ k: B_k^r \cap \partial B(r)\neq \varnothing\}$ is uniformly bounded for  $r>0$, and if for all $j<k$, $|c_k-c_j|\geq \tfrac27 |c_j|$, then the limit in (\ref{eq:altADM}) equals $16\pi \|\mathfrak m\|_{\ell^1}.$ 
\end{enumerate} \label{prop:mass}
\end{prop}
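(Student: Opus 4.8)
The plan is to analyze the boundary flux $I(r):=\int_{\{|x|=r\}}\sum_{i,j}(g_{ij,i}-g_{ii,j})\tfrac{x^j}{|x|}\,dA$ directly on large spheres, exploiting that outside $\bigcup_k B_k$ the metric is the Brill--Lindquist metric $u^4\delta$ with $u=1+\tfrac12\sum_\ell\tfrac{m_\ell}{|x-c_\ell|}$ Euclidean harmonic, for which $\sum_{i,j}(g_{ij,i}-g_{ii,j})\tfrac{x^j}{|x|}=-8u^3\partial_r u$, and that at most one ball $B_k$ meets $\{|x|=r\}$. The three versions of the integrand in \eqref{eq:altADM} differ by terms quadratic in $g-\delta$ and $\partial g$, which one checks at the end do not change the limit, so it suffices to treat the Euclidean version. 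I would first restrict to radii $r$ with $\{|x|=r\}$ disjoint from every $B_k$; such $r$ are abundant since the $|c_k|$ grow geometrically (as $|c_k|\ge\lambda\cdot 5\cdot 2^{k-2}$), and for these $r$ the metric equals $u^4\delta$ on all of $\{|x|=r\}$, making the computation clean. The limit over all $r$ is then recovered from continuity of $I(\cdot)$ together with a crude bound, using the estimate $|\partial^I(g_{ij}-\delta_{ij})|\le Cm_{k}$ on the straddling ball, showing $I$ varies by $O(m_{k(r)})\to 0$ across the exceptional intervals $[|c_k|-2,|c_k|+2]$.

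For a ball-avoiding $r$, write $I(r)=\int_{\{|x|=r\}}(-8\partial_r u)\,dA+\int_{\{|x|=r\}}(-8(u^3-1)\partial_r u)\,dA$. The first, ``main'' term I would evaluate via the divergence theorem: $u$ is harmonic on $B(r)\setminus\bigcup_{B_\ell\subset B(r)}B_\ell$, and near each $c_\ell$ one has $u=1+\tfrac{m_\ell}{2|x-c_\ell|}+\phi_\ell$ with $\phi_\ell=\tfrac12\sum_{k\ne\ell}\tfrac{m_k}{|x-c_k|}$ harmonic on $B_\ell$ (the other singularities lie at distance $\ge 5$), so $\int_{\partial B_\ell}\partial_{\nu_\ell}\phi_\ell\,d\sigma=0$ and $\int_{\partial B_\ell}\partial_{\nu_\ell}u\,d\sigma=-2\pi m_\ell$; hence $\int_{\{|x|=r\}}(-8\partial_r u)\,dA=16\pi\sum_{\ell:\,|c_\ell|<r}m_\ell$, which tends to $16\pi\|\mathfrak m\|_{\ell^1}$ when $\mathfrak m\in\ell^1$ and to $+\infty$ when $\mathfrak m\notin\ell^1$.

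The remaining work is to control the ``correction'' term $\int_{\{|x|=r\}}(-8(u^3-1)\partial_r u)\,dA$. I would split $\{|x|=r\}$ into the part at distance $\ge r/10$ from every $c_\ell$ and the cap lying inside the at most one large ball $B^r_{k_*}=\{|x-c_{k_*}|<r/10\}$ that can meet $\{|x|=r\}$. On the former, only $O(\log r)$ centers lie within distance $\sim r$ (geometric growth), and the rest contribute through the tail of the convergent series $\sum_\ell\tfrac{m_\ell}{1+|c_\ell|}$ from Section~\ref{sec:BLest}, so there $|u-1|=O((\log r)/r)$ and $|\partial_r u|=O((\log r)/r^2)$, and after integrating over area $O(r^2)$ this piece is $o(1)$. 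On the cap I write $u-1=a+b$ with $a=\tfrac{m_{k_*}}{2|x-c_{k_*}|}$ and $b=\tfrac12\sum_{\ell\ne k_*}\tfrac{m_\ell}{|x-c_\ell|}$; the separation bounds ($|c_k-c_j|\ge\tfrac27|c_j|$ for $j<k$, together with $|c_k|\ge 2^{k-\ell}|c_\ell|$ and the uniform bound on $\#\{k:B_k^r\cap\{|x|=r\}\ne\varnothing\}$) force $|x-c_\ell|\gtrsim r$ for $\ell\ne k_*$ on the cap, so $b=O(1/r)$, $\partial_r b=O(1/r^2)$, while $a=O(m_{k_*}/s)$ and $\partial_r a=O(m_{k_*}/s^2)$ with $s=|x-c_{k_*}|\ge 2$. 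Expanding $(u^3-1)\partial_r u=3(a+b)(\partial_r a+\partial_r b)+(\text{higher order})$ and integrating in cap coordinates (area element $\sim s\,ds\,d\theta$), every term is $o(1)$ except $\int 3a\,\partial_r a=O\big(\int_2^{r/10}m_{k_*}^2\,s^{-2}\,ds\big)=O(m_{k_*}^2)$. This is the delicate step and the main obstacle: the naive bound $|u^3-1|\le Cm_{k_*}$ produces a spurious $O(m_{k_*}\log r)$, and avoiding it requires that $u^3-1$ and $\partial_r u$ be \emph{jointly} concentrated near $c_{k_*}$, which in turn relies on the quantitative separation of the centers to make the remote part $b$ genuinely $O(1/r)$ on the cap.

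Collecting, for ball-avoiding $r$ one gets $I(r)=16\pi\sum_{|c_\ell|<r}m_\ell+O(m_{k_*(r)}^2)+o(1)$. If $\mathfrak m\notin\ell^1$ the sum diverges while the rest stays bounded (each mass is $\le\varepsilon_0$, and $(\widehat\delta_k)\in\ell^1$ is bounded), so $\lim_r I(r)=+\infty$; since $R(g)\equiv 0\in L^1$, a suitably asymptotically flat metric would have a finite ADM energy, so $g$ cannot be suitably asymptotically flat. If $\mathfrak m\in\ell^1$ the sum converges to $16\pi\|\mathfrak m\|_{\ell^1}$, so the limit is finite; under the boundedness and $\tfrac27$-separation hypotheses of the last bullet, $k_*(r)\to\infty$ forces $m_{k_*(r)}\to 0$, so even the $O(m_{k_*(r)}^2)$ error vanishes and $\lim_r I(r)=16\pi\|\mathfrak m\|_{\ell^1}$ exactly. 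For the positivity bullet, tracking the errors (which, absent the sharp control of the last bullet, involve the conformal factor $u$ on the caps) bounds $\big|\lim_r I(r)-16\pi\|\mathfrak m\|_{\ell^1}\big|$ by a quantity of the form $C\sum_k\big(m_k^2+(\text{cross terms controlled by }\widehat\delta_k)\big)$, which is $<16\pi\|\mathfrak m\|_{\ell^1}$ once $\varepsilon_0$ and $\sum_k\widehat\delta_k$ are sufficiently small; this yields a strictly positive limit.
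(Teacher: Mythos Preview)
Your approach is correct in outline but takes a genuinely different route from the paper. The paper works with the integrand $g^{ab}(g_{aj,b}-g_{ab,j})\nu^j = -2\,\partial_r\log f$ (where $f=u^4$) rather than your $-8u^3\partial_r u$, and applies the divergence theorem over $B(r)\setminus\bigcup_k B_k$ to obtain three pieces: boundary integrals over the inner spheres $\partial B_k$, a volume integral of $-\Delta\log f = f^{-2}|\nabla f|^2 \ge 0$, and a correction from straddling balls. Each inner boundary term over $\partial B_k$ equals $\tfrac{m_k}{1+m_k/4} + O(\widehat\delta_k)$ (the $\widehat\delta_k$ error coming from the other masses via $\Theta_k$); the volume term is \emph{nonnegative} and, for $\mathfrak m\in\ell^1$, lies in $L^1(\mathbb R^3\setminus\bigcup_k B_k)$; the straddling correction tends to zero. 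The signed volume term is the key structural gain: it yields existence of the limit essentially by monotone convergence and makes the positivity bullet immediate, since one only needs the summable boundary contribution $\sum_k \tfrac{m_k}{1+m_k/4} - C\sum_k\widehat\delta_k$ to be positive, which is exactly the smallness condition on $\sum_k\widehat\delta_k$.

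Your decomposition $-8u^3\partial_r u = -8\partial_r u - 8(u^3-1)\partial_r u$ instead keeps the analysis on the sphere. It has the advantage that the main term is exactly $16\pi\sum_{|c_\ell|<r}m_\ell$ with no $1+m_k/4$ factor, and your cap estimate for the correction is essentially the same device the paper uses for the \emph{last} bullet via the larger balls $B_k^r$ and the surface $\Sigma_r$. Where your write-up is thinner is the positivity bullet: there you do not have the separation hypotheses of the last bullet, so your single-cap analysis with $b=O(1/r)$ is not available as stated, and the sentence ``tracking the errors\ldots'' does not supply an argument. The paper's $\log f$ trick sidesteps this entirely via the sign of the volume term. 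Your restriction to ball-avoiding radii is fine for the recursive $\mathfrak p$ of Theorem~\ref{thm:main} (where such $r$ are abundant and the variation across exceptional intervals is $O(m_{k(r)})$), but the paper keeps all $r$ and absorbs the straddling balls into the divergence-theorem remainder, which is more robust.
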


\begin{proof} Let $\nu$ denote the outward-pointing Euclidean unit normal for a closed hypersurface; for example $\nu=\frac{x-c_k}{|x-c_k|}$ for the boundary $\partial B_k$.

The metric $g$ is exactly Brill--Lindquist outside of $\bigcup_{k\in \mathbb Z_+} B_k$, where we write $g_{ij}(x) = f(x) \delta_{ij}$, so that $f= (1+\sum_{k\in \mathbb Z_+}f_k)^4$, with $f_k(x) := \frac{m_k}{2|x-c_k|}$, and for which we readily compute (letting $\nabla$ be the Euclidean gradient)
\begin{equation}
    \sum\limits_{a,b,j=1}^3  g^{ab}( g_{aj,b}- g_{ab,j}) \frac{x^j}{|x|}  = \sum\limits_{j=1}^3- 2 \frac{f_{,j}}{f} \frac{x^j}{|x|} = -2(\nabla \log f) \cdot \frac{x}{|x|}.
\end{equation}

Let $\mathscr J(r)=\{k: B_k\cap  B(r)\neq \varnothing\}=\mathscr J_1(r) \cup \mathscr J_2(r)$, where $\mathscr J_1(r)=\{k:B_k \subset B(r)\}$ and $\mathscr J_2(r) = \{k: B_k \cap \partial B(r) \neq \varnothing\} $.
Let $\mathscr B(r)=\bigcup\limits_{k\in \mathscr J(r)} B_k$.  By the divergence theorem, then,
\begin{equation}
\begin{aligned}
    \frac{1}{16\pi} &\int\limits_{\{|x|=r\}} \sum\limits_{a,b,j=1}^3 { g}^{ab}({ g}_{aj,b}-{ g}_{ab,j}) \frac{x^j}{|x|} dA \label{eq:IBP-m-est}\\
    &= - \sum_{k\in \mathscr J_1(r)} \frac{1}{8\pi} \int_{\partial B_k} (\nabla \log f) \cdot \nu\, dA- \frac 1 {8\pi}\int_{B(r) \setminus \mathscr B(r)} \Delta \log f\; dx\\
    &\quad + \frac 1 {16\pi} \sum_{k\in \mathscr J_2(r)} \int_{B(r) \cap B_k} \sum\limits_{a,b,j=1}^3\partial_j( { g}^{ab}({ g}_{aj,b}-{ g}_{ab,j}) )  dx .  \end{aligned}
\end{equation}
Since $\Delta \log f = - \frac{|\nabla f|^2}{f^2}$, the contribution of the second integral is nonnegative.

We now calculate
\begin{align}- \frac{1}{8\pi} \int_{\partial B_k}\nabla  \log((1+ f_k)^4) \cdot \nu\, dA =  \frac{m_k}{1+m_k/4}\geq \min (\frac{m_k}{2}, 2). \label{eq:bdy-ell}
\end{align}

Recall that $f (1+f_k)^{-4} =(1+\Theta_k)^4$, where $\Theta_k=\sum_{\ell\neq k} \Theta_{\ell k}$ was defined in~\eqref{eq:kl}. By (\ref{eq:seriesbound1})-(\ref{eq:sum-est}), for $\widehat\delta_k$ small and $1\leq |x-c_k|\leq 2$, \begin{align} |\nabla \log f - \nabla \log ((1+f_k)^4)| = 4 |\nabla \log  (1+ \Theta_k(x)) |\leq C' \frac{\widehat{\delta}_k}{1-C\widehat \delta_k}\leq C''\widehat \delta_k. \label{eq:del} \end{align}

Let us first consider the case $\|\mathfrak m\|_{\ell^1}=\infty$.  Since $\mathfrak m\in \ell^\infty$, the last integral in (\ref{eq:IBP-m-est}) is uniformly bounded, since $\# \mathscr J_2(r)$ is bounded uniformly for all $r>0$.  As $\sum_\ell \widehat{\delta}_\ell <\infty$, we conclude by (\ref{eq:IBP-m-est})-(\ref{eq:del}) that  $$\lim\limits_{r\rightarrow \infty} \frac{1}{16\pi}\int\limits_{\{|x|=r\}} \sum\limits_{a,b,j=1}^3 g^{ab}(g_{aj,b}-g_{ab,j}) \frac{x^j}{|x|} dA = \infty.$$

As we have shown, it follows from condition (\ref{eq:sum-est}) that $\sum_\ell {m_\ell \over 1 + |p_\ell|}$ converges, whence the series $\sum_{k\in \mathbb Z_+} f_k$ converges, uniformly on compacts of $\mathbb R^3\setminus \mathfrak c$, and likewise for the derivative series.  As $\nabla f_k = -\frac{m_k/2}{|x-c_k|^2} \frac{ x-c_k}{|x-c_k|}$, we have 
\begin{align} |\nabla \log f|=\frac{|\nabla f|}{f}= \frac{4}{f^{1/4}} \Big|\sum\limits_{k\in \mathbb Z_+} \frac{m_k/2}{|x-c_k|^2} \frac{ x-c_k}{|x-c_k|}\Big|\leq 2 \sum\limits_{k\in \mathbb Z_+} \frac{m_k}{|x-c_k|^2} . \label{eq:gr-log} \end{align}  
For $\mathfrak m\in \ell^1$, $|\nabla \log f| \in L^2(\mathbb R^3\setminus \bigcup\limits_{k\in \mathbb Z_+} B_k)$, and so $\Delta \log f \in L^1(\mathbb R^3\setminus \bigcup\limits_{k\in \mathbb Z_+} B_k)$ (cf. Remark ~\ref{rmk:mell2}). 

We turn to the case $\mathfrak m\in \ell^1$, for which we see, recalling $|\partial_x^I (g_{ij}(x) -\delta_{ij})|\leq Cm_k$ on $B_k$, the last integral in (\ref{eq:IBP-m-est}) tends to 0 in the limit. If in addition $(\widehat \delta_k)_{k\in \mathbb Z_+}\in \ell^1$, we conclude from (\ref{eq:IBP-m-est})-(\ref{eq:del}) that $ \lim\limits_{r\rightarrow \infty} \int\limits_{\{|x|=r\}} g^{ab}(g_{aj,b}-g_{ab,j}) \tfrac{x^j}{|x|} dA$ is finite, and in fact if $\sum_{k\in \mathbb Z_+}\widehat\delta_k$ is suitably small, the limit is seen to be strictly positive.

Finally, we let $\mathscr J_3(r) = \{ k: B_k^r \cap \partial B(r)\neq \varnothing\}$.  We consider $r\geq 20$ so that $B^r_k\supset B_k$.  By assumption, if $j\neq k$ and $\min\{|c_j|, |c_k|\}\geq \tfrac{7r}{10}$, then $|c_k-c_j|\geq \tfrac{r}{5}$.  As such, if $k\in \mathscr J_3(r)$ and if $j\neq k$, then $B^r_j\cap B^r_k=\varnothing$.  Let $\Sigma_r$ be the boundary of $B(r)\setminus \bigcup_{k\in \mathscr J_3(r)} B_k^r $.    Then we have 
\begin{equation}
\begin{aligned}
    &\frac{1}{16\pi}\int\limits_{\{|x|=r\}} \sum\limits_{a,b,j=1}^3 { g}^{ab}({ g}_{aj,b}-{ g}_{ab,j}) \frac{x^j}{|x|} dA \\
    &= -  \frac{1}{8\pi} \int_{\Sigma_r} (\nabla \log f) \cdot \nu\, dA - \frac {1}{8\pi}\int_{\bigcup_{k\in \mathscr J_3(r)} (B(r)\cap (B_k^r \setminus B_k))} \Delta \log f\; dx \label{eq:IBP-m-est-2}\\
    &\quad + \frac 1 {16\pi} \sum_{k\in \mathscr J_3(r)} \int_{B(r) \cap B_k} \sum\limits_{a,b,j=1}^3\partial_j( { g}^{ab}({ g}_{aj,b}-{ g}_{ab,j}) )  dx . 
\end{aligned}
\end{equation}
Since $\Delta \log f = -\frac{|\nabla f|^2}{f^2} \in L^1 (\mathbb R^3 \setminus \bigcup_{k\in \mathbb Z_+} B_k)$ as shown above, we see the second integral tends to 0 in the limit. As above, the final integral above tends to 0 in the limit, since $\mathfrak m\in \ell^1$ and $|\partial_x^I (g_{ij}(x) -\delta_{ij})|\leq Cm_k$ on $B_k$.  

As for the first term, we have $ \nabla \log f = \frac{4}{f^{1/4}} \sum\limits_{k\in \mathbb Z_+} \nabla f_k$.  For any $\delta>0$, there is an $N$ such that $\sum\limits_{\ell \geq N+1} m_\ell<\delta$, and then for $r\geq R$ sufficiently large, $\sum_{\ell\leq N} f_\ell < \delta$ on $\Sigma_r$.  Thus given $\varepsilon>0$, there is an $R$ such that for all $r\geq R$, $|f^{-1/4}-1|\leq \varepsilon$ on $\Sigma_r$. Since $\varepsilon$ is arbitrary, it follows that
$$\lim\limits_{r\rightarrow \infty}-\frac{1}{8\pi} \int_{\Sigma_r} \frac{4}{f^{1/4}}\;  \nabla f_k \cdot \nu \; dA = m_k.$$ 
On the other hand, by the bound on $\#\mathscr J_3(r)$, there is a constant $C>0$ so that the area of $\Sigma_r$ is at most $Cr^2$.  As such, $\int_{\Sigma_r} |x-c_k|^{-2} dA$ is uniformly bounded for $k\in \mathbb Z_+$ and $r\geq 20$, and using $\mathfrak m\in \ell^1$ together with (\ref{eq:gr-log}, we can conclude the limit in (\ref{eq:IBP-m-est-2}) is $\|\mathfrak m\|_{\ell^1}$ as desired. 
\end{proof}

\begin{remark} \label{rmk:mell2}  It is interesting to note that having $\mathfrak{m} \in \ell^2$ is the most natural condition in order to ensure $\Delta \log f=-f^{-2} |\nabla f|^2\in L^1(R^3\setminus \bigcup\limits_{k\in \mathbb Z_+} B_k)$ in our setting, as the localization makes the contributions of the $|\nabla f_k|$ almost $L^2$-orthogonal. Indeed, setting $\mathcal{R} = \mathbb R^3\setminus \bigcup\limits_{k\in \mathbb Z_+} B_k$, we have that each function $|\nabla f_k|={m_k \over |x - c_k|^2}$ is in $L^2 (\mathcal{R})$. Moreover, we have that (where $A\lesssim B$ when $A\leq CB$ for some constant $C$) 
\[
\int_{\{ |x - c_k| \ge 2\} \cap \{ |x - c_j| \ge 2\}} {1 \over |x - c_k|^2 |x - c_j|^2} d x \lesssim {1 \over 1 + |c_k - c_j|} \lesssim 2^{-|j - k|}.
\]
This means that for $j\neq k$, $|\nabla f_k|$ and $|\nabla f_j|$ are almost $L^2$-orthogonal. Thus, we have that 
\begin{equation*}
\begin{aligned}
\Vert \nabla \log f \Vert_{L^2 (\mathcal{R})}^2 &\lesssim \int_\mathcal{R} \sum_{j\geq 1} \sum_{k\geq 1} |\nabla f_j| |\nabla f_k| d x \lesssim \sum_{j\geq 1} \sum_{k\geq 1} m_j m_k 2^{-|j - k|} \le 3 \Vert \mathfrak{m} \Vert_{\ell^2}^2 
\end{aligned}
\end{equation*}
since 
\begin{align*}
\sum_{j\geq 1} \sum_{k\geq 1} m_j m_k 2^{-|j - k|}& = \|\mathfrak m\|_{\ell^2} + \sum_{k\geq 1} \sum_{j> k}2 m_j m_k 2^{-j + k}\\
&\leq \|\mathfrak m\|_{\ell^2}+ \sum_{j= 2}^\infty m_j^2 \sum_{k=1}^{ j-1} 2^{-j+k} +  \sum_{k\geq 1} m_k^2\sum_{j> k}  2^{-j + k} \leq 3 \|\mathfrak m\|_{\ell^2}.
\end{align*}

We remark that the above estimate uses that the points $c_k$ move away from each other geometrically. If the points move away more slowly, the orthogonality will be weaker, and the argument may require that $\mathfrak{m} \in \ell^p$ for some $p < 2$ in order to compensate for this.
\end{remark}

\subsection{On the Hamiltonian} Since the data we construct may fail to be asymptotically flat, one might question what the mass integrals even mean in this context.   For an interpretation, we consider the evolution of our initial data into a spacetime, expressed in lapse-shift form on a spacetime open set $\mathscr S$ as
$$\bar g = - N^2 dt^2 + g_{ij}(dx^i+X^i dt)\otimes(dx^j+ X^j dt). $$ 
The Einstein-Hilbert action is then $\int_{\mathscr S} R(\bar g)\; d\mu_{\bar g}$.  Recasting this in Hamilonian form leads one, after what is by now a classical computation, to the Hamiltonian (up to a multiple of by a constant to get dimensions of energy, which we take to be $\tfrac{1}{16\pi}$) $$\int\limits_{\mathbb R^3} \left( -N (R(g) - |K|^2_g+ (\mathrm{tr}_g K)^2) +2X \cdot \mathrm{div}_g (K-(\mathrm{tr}_gK) \right)\; d\mu_g$$ where our convention is that as a linear operator on each $T_p\mathbb R^3$, $K$ corresponds to the shape operator $-\nabla_{\bar g} n$, with $n$ the future-pointing unit normal.  As such, a simple computation shows the evolution for $g$ is $\frac{\partial g_{ij}}{\partial t} = -2NK_{ij} + X_{i;j}+ X_{j;i}$. For this to yield the same form of the equations of motion in both the setting of a closed initial slice and the setting here, we have to analyze the contribution of certain boundary terms in the analysis of the linearization of the Hamiltonian, namely those coming from the linearization $L_g$ of the scalar curvature operator, and from the divergence of the variation of the momentum tensor $K-\mathrm{tr}_g K$.  For simplicity we work in Fermi coordinates so that $N=1$ and $X=0$, in which case the term to analyze for boundary contributions (``at infinity'' here) is just $\int\limits_{\mathbb R^3} - L_g h d\mu_g $, where we consider $h$ such that $h$ and $\partial h$ tending to zero as $r$ grows without bound.  Should there be any contribution from infinity, we can just modify the Hamiltonian by subtracting off the appropriate term.  As such, the term to add comes from an analysis of $\int_{\mathbb R^3}  L_g h \; d\mu_g$.  

Let $g$ be one of the metrics constructed in Theorem~\ref{thm:main}, with $|\partial_x^I(g_{ij}(x)-\delta_{ij}|\leq C m_k$ on each $B_k$, and with $\# \mathscr{J}_2(r)$ uniformly bounded for $r>0$, where again $\mathscr{J}_2(r)=\{ k: B_k\cap \partial B(r)\neq \varnothing\}$, and $\mathscr{B}_2(r)=\bigcup\limits_{k\in \mathscr{J}_2(r)} B_k$.  As $L_gh = -\Delta_g (\mathrm{tr}_g h) + \mathrm{div}_g \mathrm{div}_g h - h \cdot \mathrm{Ric}(g)$, upon integrating over a large coordinate ball, we get the boundary term
$$\int\limits_{\{|x|=r\}} (-g^{ik} h_{ik;j} + g^{ik} h_{ij;k}) \cdot\nu^j_g d\sigma_g.$$
Since $h$ and $\partial h$ decay to 0, then since $\mathfrak m\in \ell^\infty$ and $\#\mathscr{J}_2(r)$ is bounded, we see
\begin{align} \lim\limits_{r\rightarrow\infty}\int\limits_{\{|x|=r\}\cap \mathscr{B}_2(r)} (-g^{ik} h_{ik; j} + g^{ik} h_{ij;k}) \cdot\nu^j_g d\sigma_g=0. \label{eq:hb0} \end{align}

Now on $\Sigma'_r:=\{ |x|=r\}\setminus \mathscr{B}_2(r)$, the metric is given by $g_{ij}= f\delta_{ij}$ as above, and so $d\sigma_g= f \; dA$ and the unit normal is $\nu_g = \nu/\sqrt{f}$, so that the integral becomes
\begin{equation} \label{eq:MassInt}
\begin{aligned}
\int\limits_{\Sigma'_r}\sum_{i,j} & \Big( -  f^{-1}  (h_{ii,j}- \sum_k (\Gamma^k_{ji}h_{ki}+ \Gamma^k_{ij}h_{ik})\Big)  \frac{x^j}{|x|}\, \sqrt{f}\, dA\\
& \quad + \int\limits_{\Sigma'_r}\sum_{i,j}  \Big(   f^{-1}(h_{ij,i}- \sum_k (\Gamma^k_{ii}h_{kj}+ \Gamma^k_{ij}h_{ik})\Big)  \frac{x^j}{|x|}\, \sqrt{f}\, dA\\
& = \int\limits_{\Sigma_r'}\sum_{i,j}   \Big(h_{ij,i}-h_{ii,j}\Big) f^{-1/2} \frac{x^j}{|x|}\, dA\\
&\quad +\int\limits_{\Sigma_r'}\sum_{i,j,k} \Big(-\Gamma^k_{ii}h_{kj}+ \Gamma^k_{ij}h_{ik}\Big) f^{-1/2} \frac{x^j}{|x|}\, dA
\end{aligned}
\end{equation}
where $\Gamma^k_{ij} = \frac12 f^{-1} ( \delta_{ik} f_{,j}+ \delta_{jk}f_{, i}- \delta_{ij} f_{,k})$, so that $\Gamma^k_{ii} = \frac12 f^{-1} ( 2 \delta_{ik} f_{,i}- f_{,k})$.

In order to estimate the second integral, we estimate $\int_{\Sigma'_r} f^{-1} |\nabla f|\; dA$.  With (\ref{eq:gr-log}) in mind, we claim for $\mathfrak m\in \ell^1$, and with suitable decay on $h$, the second integral above limits to 0.  Indeed, for $k\in \mathscr J_2(r)$, let $\gamma = |c_k|/r$, so $1-2r^{-1}< \gamma< 1+2r^{-1}$. Then, we note that
\[
|x - c_k|^2 = |x|^2 + |c_k|^2 - 2 |x| |c_k| \cos\phi,
\]
where $\phi$ is the angle between $x$ and $c_k$. Working in spherical coordinates, one then readily computes the integral
$$\int\limits_{\{ |x|=r\} \setminus B_k} |x-c_k|^{-2}\; dA= 2\pi \int\limits_{\phi_1}^{\pi} \frac{\sin \phi}{ 1+ \gamma^2 - 2 \gamma \cos \phi}\; d\phi = 2\pi \gamma^{-1} \log((1+\gamma) r/2),$$
where $\cos\phi_1 = \frac{1+\gamma^2-4r^{-2}}{2\gamma}$.  If $k\notin \mathscr J_2(r)$, then with $\gamma= |c_k|/r$ (so that $\gamma> 1+2r^{-1}$ or $\gamma< 1-2r^{-1}$), and assuming $c_k\neq 0$ (else the integral below evaluates to $4\pi$), we have \begin{align*}
\int\limits_{\{ |x|=r\} } |x-c_k|^{-2}\; dA& = 2\pi \int\limits_{0}^{\pi} \frac{\sin \phi}{ 1+ \gamma^2 - 2 \gamma \cos \phi}\; d\phi\\ &
= \pi \gamma^{-1}  \log  (1+\gamma^2 - 2\gamma \cos \phi)\Big|_0^{\pi}\\
&=2\pi \gamma^{-1} \log \left( \frac{ 1+\gamma}{|1-\gamma|}\right) \leq 4\pi \log (r+1),
\end{align*}
for $r\geq 4$.  

For example, then, using (\ref{eq:gr-log}), the bound on $\# \mathscr J_2(r)$ and $\mathfrak m\in \ell^1$, and if we assume $\lim\limits_{|x|\rightarrow \infty}|h_{ij}(x)|\log |x|=0$, for all $i$ and $j$, we conclude as desired.

We can recast the condition $\lim_{|x| \rightarrow \infty} |h_{i j}(x)| \log |x| = 0$ in an equivalent and manifestly translation-invariant form $\sup_{p \in \mathbb{R}^3} \lim_{|x| \rightarrow \infty} |h_{i j} (x)| \log |x-p|=0$. Another translation-invariant condition that one might naturally impose would be $h \in W^{1,p} (\mathbb{R}^3)$, for some $1\leq p < \infty$. 
Indeed, this condition allows us to show that the final integral in \eqref{eq:MassInt} tends to $0$ as $|x| \rightarrow \infty$.  For any $h \in W^{1,p} (\mathbb{R}^3)$, given $\varepsilon>0$, for sufficiently large $R$ 
\[
\int_{\{  |x| \ge R\}} (|h_{i j} (x)|^p + |\partial h_{i j} (x)|^p )d x \le \varepsilon. 
\]
Assuming for the moment that $h$ is smooth, then for any $r_2 > r_1 \geq 1$, with $\omega\in \mathbb S^2$, the unit sphere with round unit metric $\mathring g$, we have (applying H\"older's inequality)

\begin{align*}
\Big| \int\limits_{\{|x|=r_2\}} & |h_{i j} (x)|^p d A  - \int\limits_{\{|x|=r_1\}} |h_{i j} (x)|^p d A \Big| = \Big|  \int_{\mathbb S^2} \int_{r_1}^{r_2} \frac{\partial}{\partial r} (|h_{i j}(r\omega)|^{p} r^2 )dr\;  d \sigma_{\mathring g} \Big| \\
&\leq  \int\limits_{\{ r_1\leq |x|\leq r_2\}}p |h_{i j} (x)|^{p - 1} |\partial_r h_{i j} (x)| d x +  \int\limits_{\{ r_1\leq |x|\leq r_2\}} 2|x|^{-1} |h_{i j} (x)|^{p }  d x\\
&\leq  p \Big(\int\limits_{\{ r_1\leq |x|\leq r_2\}} |h_{i j} (x)|^{p } dx\Big)^{(p-1)/p} \Big( \int\limits_{\{ r_1\leq |x|\leq r_2\}} |\partial_r h_{i j} (x)|^p dx\Big)^{1/p}\\ 
& \qquad + 2\int\limits_{\{ r_1\leq |x|\leq r_2\}}  |h_{i j} (x)|^{p }  d x.
\end{align*} 
By density of smooth functions in $W^{1,p}(\mathbb R^3)$, the above estimate holds for almost every $r\geq 1$, and actually applying the Sobolev trace theorem, it can be construed to hold for all $r$.  Thus we conclude $\lim\limits_{r\rightarrow \infty} \int_{\{|x|=r\}} |h_{i j} (x)|^p d A$ exists, from which we further conclude this limit must vanish.

Now, given some $k\in \mathscr J_2(r)$, and with $\gamma$ defined as above, we can once again work in spherical coordinates, giving us that (where again $\cos\phi_1 = \frac{1 + \gamma^2 - 4 r^{-2}}{2\gamma}$)
$$\int\limits_{\{ |x|=r\} \setminus B_k} |x-c_k|^{-2 q}\; dA= 2\pi \int\limits_{\phi_1}^{\pi} \frac{\sin \phi}{ r^{2 (q - 1)} (1+ \gamma^2 - 2 \gamma \cos \phi)^q}\; d\phi \leq C_q$$
for $q > 1$; we also note that $\||x-c_k|^{-2}\|_{L^\infty(\Sigma_r')}\leq 1/4$.  By taking $q$ such that ${1 \over p} + {1 \over q} = 1$ and using Holder's inequality, the last integral in (\ref{eq:MassInt}) is bounded above by 
\[
C_p' \Vert h \Vert_{L^p (\{|x|=r\})} \rightarrow 0
\]
where $C_p'$ is some constant depending on $p$, as desired.  We note that replacing the pointwise decay assumptions on $h$ and $\partial h$ with $h\in W^{2,p}(\mathbb R^n)$, for some $1\leq p < \infty$, we could similarly handle the limit in (\ref{eq:hb0}).

The same analysis shows that the linearization of  $$\lim\limits_{r\rightarrow \infty}\int\limits_{\{|x|=r\}} g^{ab} \Big(g_{aj,b}-g_{ab,j}\Big)  \nu_g d\sigma_g$$ about a metric of the type we have constructed gives the desired limiting term. 

Using $\mathfrak m\in \ell^1$, we see  $f$ on $\Sigma_r'$ tends to $1$ as $r$ grows without bound.  Thus the limiting term we must add is thus seen to be 
$$\frac{1}{16\pi} \lim\limits_{r\rightarrow \infty}\int\limits_{\{|x|=r\}} g^{ab} \Big(g_{aj,b}-g_{ab,j}\Big)  \frac{x^j}{|x|}\, dA =\frac{1}{16\pi} \lim\limits_{r\rightarrow \infty}\int\limits_{\{|x|=r\}} \Big(g_{ij,i}-g_{ii,j}\Big)  \frac{x^j}{|x|}\, dA.$$
As such, under the conditions in Proposition ~\ref{prop:mass}, the mass of the metric can be identified with $\|\mathfrak m\|_{\ell^1}$, as shown above.

The total mass (energy) of a spacetime is typically associated to isolated systems and computed on a spacelike hypersurface which is suitably asymptotically flat (or hyperboloidal).  We have argued that for the metrics constructed in Theorem~\ref{thm:main}, which tend to fail to be asymptotically flat, one can interpret the ``sum of masses" $\|\mathfrak m\|_{\ell^1}$ as a suitable notion of mass.  While the computations presented in this section are tailored to the specific metrics we have constructed in Theorem~\ref{thm:main}, they do suggest that there may be an interesting class of metrics extending beyond the asymptotically flat regime which still admits a useful notion of mass-energy.

\appendix

\addtocontents{toc}{\setcounter{tocdepth}{1}}

\section{On the proof of Proposition \ref{prop:def}} \label{sec:proof-prop}

Recall  $\Omega=\{ x\in \mathbb R^3: 1< |x|< 2\}$ and the definition of the rotationally symmetric bump function $\zeta$ above, and we let $\mathring g$ be the Euclidean metric with $\mathring g_{ij}=\delta_{ij}$ in the coordinates $x$. 

We will prove the following extension of Proposition ~\ref{prop:def}.  To facilitate the statement, we extend some notation: let $k_0=4$, and for $m>0$, let $k_m=1$.  We now consider $g_S^m$ for $m\geq 0$, where we note $g^0_S=\mathring g$ is the Euclidean metric.  For $m>0$, $f^m(x)=\frac{ 1-\frac{m}{2|x|}}{1+\frac{m}{2|x|}}$ spans the one-dimensional kernel of $L^*_{g^m_S}$, whereas $\mathrm{ker}(L^*_{\mathring g})=\mathrm{span}\{ 1, x^2, x^2, x^3\}$.  So we let $f^0=\langle 1, x^1, x^2, x^3\rangle$, in order that we can express a general element  of $\mathrm{ker}(L^*_{\mathring g})$ as $b  f^m$, with $b\in \mathbb R^{k_m}$: in case $m=0$, $b f^0$ is the dot product.  

\begin{remark} While we only require $m\geq 0$ for our applications, for $m<0$, note that $(g_S^m)_{ij}(x)= (1+\frac{m}{2|x|})^4 \delta_{ij}$ is a scalar-flat metric on the region $|x|> -m/2$, and the kernel of $L^*_{g_S^m}=\mathrm{span}\{ f^m\} $ has dimension $k_m=1$.  The results in this section are formulated on $\Omega$, and apply equally well to the case $m>-2$.
\end{remark}

\begin{prop} Let $\Omega'$ be open and compactly contained in $\Omega$.   There is a constant $C_0>0$ such that for smooth metrics $\gamma$ sufficiently near $g_S^{m}$ in $C^{4,\alpha}(\overline{\Omega})$ and with scalar curvature $R(\gamma)$ supported in $\overline{\Omega'}$, there exists a constant $ b(\gamma)\in \mathbb R^{k_m}$ along with a smooth symmetric tensor $h=h(\gamma)$ (which extends smoothly by $0$ outside $\Omega$), so that $\gamma+h(\gamma)$ is a metric with $R(\gamma+h(\gamma))= b(\gamma)  \zeta f^{m}$.   Moreover, $h(\gamma)$, and $b(\gamma)$ depend continuously on $\gamma$, with estimates $\|h(\gamma)\|_{C^{2,\alpha}} \leq C_0 \|R(\gamma)\|_{C^{0, \alpha}}$, and $\|(h(\gamma_1), b(\gamma_1))-(h(\gamma_2),b(\gamma_2))\|_{C^{2,\alpha}\times \mathbb R^{k_m}} \leq C_0 \|\gamma_1-\gamma_2\|_{C^{4, \alpha}}$. Furthermore for each $k\in \mathbb Z_+$ there is a $C_k>0$ such that $\|h(\gamma)\|_{C^{k+2,\alpha}} \leq C_k \|R(\gamma)\|_{C^{k, \alpha}}$.

In case $\gamma$ is parity-symmetric, $h(\gamma)$ is parity-symmetric as well. 

\label{prop:def0} \end{prop}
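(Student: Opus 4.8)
The plan is to run the underdetermined-elliptic gluing scheme of Corvino--Schoen (see \cite{cor:schw, cs:ak, cor-lan}, cf.\ \cite{cd}), adapted so as to allow the kernel of $L^*_{g_S^m}$ to have dimension $k_m=1$ (for $m>0$, spanned by $f^m$) or $k_m=4$ (for $m=0$, spanned by $1,x^1,x^2,x^3$). First I would fix a smooth weight $\rho>0$ on $\Omega$ that decays at $\partial\Omega$ faster than every power of $d(x)=d(x,\partial\Omega)$, say $\rho=e^{-1/d(x)}$, together with the associated weighted H\"older and Hilbert spaces, designed so that $\rho$ times any quantity growing at most polynomially in $1/d$ extends smoothly by $0$ across $\partial\Omega$. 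One then looks for the correction tensor in the form $h(\gamma)=\rho\, L_\gamma^* u$; any such $h(\gamma)$ is automatically supported in $\overline\Omega$ and extends smoothly by zero, and solving $R(\gamma+h)\in\mathrm{span}\{\zeta f^m\}$ is governed by the fourth-order, formally self-adjoint operator $T_\gamma u:=L_\gamma(\rho\, L_\gamma^* u)$.

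The analytic core is a weighted Poincar\'e-type coercivity estimate: there is $c>0$ with $\int_\Omega \rho\,|L_{g_S^m}^* u|^2 \ge c\,\|u\|_H^2$ for all $u$ in the weighted Hilbert space $H$ taken modulo $\ker L_{g_S^m}^*$. This rests on the fact, recorded in the Preliminaries, that $L^*_{g_S^m}$ has no kernel beyond the explicit finite-dimensional one, together with the fast decay of $\rho$, which suppresses boundary contributions in the integration by parts. By continuity of $\gamma\mapsto(L_\gamma,L_\gamma^*)$ in $C^{4,\alpha}(\overline\Omega)$, the estimate persists with a uniform constant for all $\gamma$ in a small $C^{4,\alpha}$-neighborhood of $g_S^m$, after fixing a $k_m$-dimensional complement to $\ker L^*_{g_S^m}$. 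Lax--Milgram then produces, for such $\gamma$ and for data $f$ in the appropriate weighted space, a unique $u\perp\ker$ with $T_\gamma u = f-\Pi_\gamma f$, where $\Pi_\gamma$ is a $\gamma$-dependent perturbation of the $L^2(d\mu_{g_S^m})$-orthogonal projection onto $\mathrm{span}\{\zeta f^m\}$ (resp.\ its $k_m$-dimensional analogue when $m=0$). Interior elliptic regularity makes $u$, hence $h=\rho L_\gamma^* u$, smooth in $\Omega$, the weight forces smooth vanishing at $\partial\Omega$, and the Schauder estimates (with the weight absorbing the boundary blow-up of $u$) give $\|h\|_{C^{k+2,\alpha}}\le C_k\|f\|_{C^{k,\alpha}}$.

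To pass to the genuine equation one expands $R(\gamma+h)=R(\gamma)+L_\gamma h+\mathcal Q_\gamma(h)$ and solves $R(\gamma+h)=b(\gamma)\,\zeta f^m$ by a fixed-point argument: set $\Phi(h)=\rho L_\gamma^* u$, where $u$ solves $T_\gamma u = -R(\gamma)-\mathcal Q_\gamma(h)+\Pi_\gamma\big(R(\gamma)+\mathcal Q_\gamma(h)\big)$. Since $\gamma$ is $C^{4,\alpha}$-close to the scalar-flat $g_S^m$ and $R(\gamma)$ is supported in $\overline{\Omega'}$, the quantity $\|R(\gamma)\|_{C^{2,\alpha}}$ is small (controlled by $\|\gamma-g_S^m\|_{C^{4,\alpha}}$) and $\mathcal Q_\gamma$ is quadratic in $h$ and its first two derivatives, so $\Phi$ contracts on a small $C^{2,\alpha}$-ball; its fixed point $h(\gamma)$ satisfies $R(\gamma+h(\gamma))=b(\gamma)\,\zeta f^m$, with $b(\gamma)\in\mathbb R^{k_m}$ the coefficient vector determined by $\Pi_\gamma$, and the a priori bound on the fixed point yields $\|h(\gamma)\|_{C^{2,\alpha}}\le C_0\|R(\gamma)\|_{C^{0,\alpha}}$, with $\|h(\gamma)\|_{C^{k+2,\alpha}}\le C_k\|R(\gamma)\|_{C^{k,\alpha}}$ by bootstrapping the elliptic estimate. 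Because $L_\gamma$, $L_\gamma^*$, the bilinear form, $\Pi_\gamma$ and $\mathcal Q_\gamma$ depend Lipschitz-continuously on $\gamma\in C^{4,\alpha}(\overline\Omega)$, so do the Lax--Milgram inverse and the fixed point, which gives the claimed Lipschitz dependence of $(h(\gamma),b(\gamma))$. Finally, if $\gamma$ is parity-symmetric the weight, the spaces and all the operators are equivariant under $x\mapsto-x$ (in particular $L_\gamma^*$ sends parity-symmetric functions to parity-symmetric tensors), so by uniqueness $h(\gamma)$ is parity-symmetric; when $m=0$ the odd kernel elements $x^i$ pair to $0$ against parity-symmetric data, forcing the corresponding components of $b(\gamma)$ to vanish.

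The main obstacle is the weighted coercivity estimate of the second paragraph: one must choose $\rho$ to decay fast enough at $\partial\Omega$ that $\rho\,L_\gamma^* u$ extends smoothly by zero, yet slow enough that the Poincar\'e inequality survives the degeneration there, and one must propagate that estimate with a uniform constant over a $C^{4,\alpha}$-neighborhood of $g_S^m$. This is the technical heart of Corvino-type gluing constructions; the remaining ingredients are a fairly standard combination of Lax--Milgram, interior Schauder theory, and the contraction mapping principle, carried out here essentially as in \cite{cor:schw, cs:ak, cor-lan}.
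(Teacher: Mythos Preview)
Your proposal is correct and follows essentially the same route as the paper: the same exponential weight $\rho\sim e^{-1/d}$, the same ansatz $h=\rho L_\gamma^* u$, the same weighted coercivity for $L_\gamma^*$ transverse to $\ker L_{g_S^m}^*$ as the analytic core, and the same parity argument by uniqueness. The only cosmetic differences are that the paper obtains the linear solution by minimizing the functional $\mathcal G(u)=\int_\Omega(\tfrac12\rho|L_\gamma^*u|^2-\psi u)\,d\mu_\gamma$ rather than invoking Lax--Milgram, and it runs an explicit Picard iteration (always linearizing at the fixed $\gamma$) rather than packaging the same map as a contraction; both substitutions are standard equivalences.
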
  

We briefly recall the strategy for proving this result, which is then carried out in the rest of the Appendix. Our goal is to solve for a metric with vanishing scalar curvature modulo the kernel of the adjoint of the linearized scalar curvature operator around a given metric. This will be done by iteration, repeatedly solving the equation linearized around a fixed metric (see Section~\ref{sec:ANonlinear}). In order to do this, we must effectively solve the linearized equation. This is carried out in Section~\ref{sec:ALinearized} using Hilbert space methods. However, we want to solve the linearized equation in such a way that this procedure does not change the metric in the exterior of $\Omega$.  Having nontrivial compact solutions of this kind is not possible for elliptic equations in general, but in this case, we can construct such solutions by leveraging the underdetermined nature of the equations. From a technical viewpoint, there is an elliptic estimate which does not contain boundary terms (see Section~\ref{sec:AEllipticEst}), and this estimate allows us to work in spaces which have weights that vanish, dual to spaces with weights that blow up, at all orders as we approach the boundary of $\Omega$ (see Sections~\ref{sec:AWeightedSpaces} through~\ref{sec:AWeightedEsts2}). Working in these spaces naturally localizes the gluing to $\Omega$, as desired.

\subsection{Basic injectivity estimates} \label{sec:AEllipticEst} We prove two basic injectivity estimates for $L_{g}^*$ for metrics $g$ near $g_S^m$, on suitable spaces transverse to $\mathrm{ker}(L_{g_S^m}^*)$ on $\Omega$.  For $\varepsilon\in [0, 1)$, we let $\Omega_\varepsilon=  \{ x\in \mathbb R^3: 1+\varepsilon< |x|< 2-\varepsilon\}$.

\begin{lemma}  Suppose $S\subset L^2(\Omega,g_S^m)$ is a subspace so that $L^2(\Omega,g_S^m)= S\oplus \mathrm{ker}(L_{g_S^m}^*)$.  There is a $C^{2}(\overline{\Omega})$-neighborhood $\mathcal U$ of $g_S^m$ and a $C>0$ so that for all $g\in \mathcal U$ and for all $u\in H^2(\Omega,g) \cap S$, 
\begin{equation}\label{eq:b-est-0} \|u\|_{H^2(\Omega,g)}\leq C \|L_{g}^*u\|_{L^2(\Omega,g)}.  \end{equation}
\label{lem:eq:b-est-0}
\end{lemma}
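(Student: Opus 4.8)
## Proof Proposal for Lemma~\ref{lem:eq:b-est-0}

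The plan is to argue by contradiction, combining a unique-continuation/kernel characterization with a standard elliptic estimate and a compactness argument. First I would establish the baseline estimate at the model metric $g=g_S^m$ itself. Since $L_{g_S^m}^*$ is an overdetermined-elliptic operator with injective symbol, one has the a priori estimate $\|u\|_{H^2(\Omega,g_S^m)} \leq C\big(\|L_{g_S^m}^*u\|_{L^2(\Omega,g_S^m)} + \|u\|_{L^2(\Omega,g_S^m)}\big)$ for all $u\in H^2(\Omega,g_S^m)$ (this is the Douglis--Nirenberg type estimate; note it carries an interior $L^2$ term since we work on all of $\Omega$ with no boundary conditions imposed, which is exactly what we want). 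To remove the lower-order term for $u\in S$, suppose no uniform constant works: then there is a sequence $u_j\in S$ with $\|u_j\|_{H^2}=1$ and $\|L_{g_S^m}^*u_j\|_{L^2}\to 0$. By Rellich compactness, a subsequence converges in $L^2$ to some $u_\infty$, and by the elliptic estimate the convergence is in fact in $H^2$, so $\|u_\infty\|_{H^2}=1$; moreover $L_{g_S^m}^*u_\infty=0$, so $u_\infty\in\mathrm{ker}(L_{g_S^m}^*)$. But $S$ is closed in $L^2$ (it is a complement of the finite-dimensional kernel), so $u_\infty\in S\cap\mathrm{ker}(L_{g_S^m}^*)=\{0\}$, contradicting $\|u_\infty\|_{H^2}=1$. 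This gives \eqref{eq:b-est-0} with some constant $C_0$ at $g=g_S^m$.

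Next I would upgrade to a neighborhood $\mathcal U$ of $g_S^m$ in $C^2(\overline\Omega)$ by a perturbation argument. The coefficients of $L_g^*$ (which involve $g$, $\partial g$, $\partial^2 g$ through $\mathrm{Hess}_g$, $\Delta_g$, and $\mathrm{Ric}(g)$) depend continuously on $g$ in $C^2$ — actually $L_g^*$ is first order in $u$ but its coefficients see two derivatives of $g$, so $C^2$-closeness of $g$ suffices for the leading and lower-order coefficients of $L_g^*$ to be uniformly close. Likewise the volume measure $d\mu_g$ and the norms $\|\cdot\|_{H^k(\Omega,g)}$, $\|\cdot\|_{L^2(\Omega,g)}$ are uniformly equivalent to the fixed Euclidean ones for $g\in\mathcal U$. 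Therefore $\|L_g^* u - L_{g_S^m}^*u\|_{L^2} \leq C\|g-g_S^m\|_{C^2}\,\|u\|_{H^1}$, and choosing $\mathcal U$ small enough that $C\|g-g_S^m\|_{C^2}\leq \tfrac{1}{2C_0}$, we can absorb the error:
\begin{align*}
\|u\|_{H^2(\Omega,g)} &\leq C_1\|u\|_{H^2(\Omega,g_S^m)} \leq C_1 C_0\|L_{g_S^m}^* u\|_{L^2(\Omega,g_S^m)} \\
&\leq C_1 C_0\big(\|L_g^* u\|_{L^2} + \|L_g^*u - L_{g_S^m}^*u\|_{L^2}\big) \leq C_1C_0 C_2\|L_g^* u\|_{L^2(\Omega,g)} + \tfrac12\|u\|_{H^2(\Omega,g)},
\end{align*}
for $u\in H^2(\Omega,g)\cap S$, which rearranges to the claimed estimate with a uniform $C$. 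One subtlety: the subspace $S$ is fixed (defined via $L^2(\Omega,g_S^m)$) while we vary $g$, so $u\in H^2(\Omega,g)\cap S$ makes sense as long as $H^2(\Omega,g)=H^2(\Omega,g_S^m)$ as sets with equivalent norms, which holds on $\mathcal U$; the compactness step above only uses the fixed model metric, so no uniformity in $g$ is needed there.

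The main obstacle is making sure the elliptic estimate at $g_S^m$ genuinely has no boundary term and that the kernel of $L_{g_S^m}^*$ on $\Omega$ (not on $\mathbb{R}^3\setminus\{0\}$) is still exactly one-dimensional, spanned by $f^m$ — i.e., a unique-continuation statement ruling out compactly-structured kernel elements on the annulus. This is where one invokes the explicit computation recalled in the Preliminaries: any kernel element satisfies $\mathrm{Hess}_{g_S^m}f = f\,\mathrm{Ric}(g_S^m)$, an overdetermined ODE system after using rotational symmetry, whose solution space on the connected set $\Omega$ is spanned by $f^m$; combined with the standard overdetermined-elliptic interior estimate (no boundary conditions, hence no boundary terms, but a full interior $L^2$ remainder), the compactness argument closes. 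Everything else is routine functional analysis and the $C^2$-continuity of the coefficients of $L_g^*$.
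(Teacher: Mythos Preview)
Your argument is essentially correct and uses the same ingredients as the paper (an elliptic estimate with a lower-order remainder, Rellich compactness, and the characterization of $\ker L_{g_S^m}^*$), but there is one slip and one organizational difference worth noting.

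The slip: $L_g^* u = -(\Delta_g u)g + \mathrm{Hess}_g u - u\,\mathrm{Ric}(g)$ is \emph{second} order in $u$, not first. In particular, the difference $(L_g^* - L_{g_S^m}^*)u$ contains the term $(g^{ij} - (g_S^m)^{ij})\partial_i\partial_j u$ from the Laplacian, so the correct perturbation bound is $\|L_g^* u - L_{g_S^m}^* u\|_{L^2} \le C\|g - g_S^m\|_{C^2}\|u\|_{H^2}$, not $\|u\|_{H^1}$. This does not break your absorption step: with $H^2$ on the right you still absorb for $\|g - g_S^m\|_{C^2}$ small enough.

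The organizational difference: the paper runs a \emph{single} contradiction argument directly over sequences $g_i \to g_S^m$ in $C^2$ and $u_i \in H^2 \cap S$, rather than first proving the estimate at $g_S^m$ and then perturbing. The basic estimate with lower-order term is obtained algebraically from the trace identity $\mathrm{tr}_g(L_g^* u) = -(n-1)\Delta_g u - u R(g)$, which immediately gives $\|\mathrm{Hess}_g u\|_{L^2(\Omega)} \le C(\|L_g^* u\|_{L^2(\Omega)} + \|u\|_{L^2(\Omega)})$ pointwise, uniformly over $g$ in a $C^2$-neighborhood; no Douglis--Nirenberg machinery is needed. Your two-step route (base case plus perturbation) is a perfectly valid alternative, and arguably more modular; the paper's unified contradiction is slightly shorter and avoids tracking the perturbation constants.
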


\begin{proof} Since $\mathrm{tr}_g(L_g^*u)= -(n-1) \Delta_g u - uR(g)$, we see immediately that there is a $C>0$ and a $C^{2}(\overline{\Omega})$-neighborhood $\mathcal U_0$ of $g^m_S$ so that for all $g\in \mathcal U_0$, for all $u\in H^2(\Omega,g) $, and for all $\varepsilon\in [0,1)$, 
\begin{equation*}  \|\mathrm{Hess}_g u\|_{L^2(\Omega_\varepsilon,g)}  \leq C \big(\|L_g^*u\|_{L^2(\Omega_\varepsilon,g)} + \|u\|_{L^2(\Omega_\varepsilon,g)}\big).
\end{equation*}  
We can choose such a $C\geq 1$, and hence 
\begin{equation} \label{eq:B-Easy} \|u\|_{H^2(\Omega_\varepsilon,g)}\leq C (\|L_g^*u\|_{L^2(\Omega_\varepsilon,g)} + \|u\|_{H^1(\Omega_\varepsilon,g)}). \end{equation}

If (\ref{eq:b-est-0}) fails, there is a sequence $g_i\in \mathcal U_0$, $g_i\rightarrow g^m_S$ in $C^2(\overline{\Omega})$, and a sequence $u_i\in H^2(\Omega, g_i) \cap S$ such that  $\|u_i\|_{H^2(\Omega,g_i)}=1$ but $\|L_{g_i}^*u_i\|_{L^2(\Omega,g_i)}\rightarrow 0$.  Thus there are $0<\beta_1<1<\beta_2$ such that $\beta_1\leq \|u_i\|_{H^2(\Omega, g^m_S)} \leq \beta_2$; moreover, $\|L^*_{g^m_S}u_i\|_{L^2(\Omega, g^m_S)}\rightarrow 0$.  By the Rellich Lemma and (\ref{eq:B-Easy}) on $\Omega$ with $g=g^m_S$, applied to differences $u_i-u_j$, we have that $u_i$ converges to some $u\in H^2(\Omega, g^m_S)$, with $\|u\|_{H^2(\Omega, g^m_S)} \geq \beta$, and $L_{g^m_S}^*u=0$.  Moreover, $u\in S$ as well, since $S$ is closed.  Thus we have a contradiction. \end{proof}

\begin{remark}  If $\widetilde{m}$ is sufficiently close to $m$, then $g_S^{\widetilde{m}}\in \mathcal U$.  We remark that for $u\in L^2(\Omega,g)$, $u\in H^2(\Omega,g)$ if and only if $L_g^*u\in L^2(\Omega,g)$. 
Furthermore, $f\in H^2(\Omega, g)$ if and only if $f\in H^2(\Omega, g^m_S)$, and moreover one gets an equivalent estimate if one changes the metric used for the $H^2$ and $L^2$ norms. \end{remark}

\begin{remark} For $c\in \mathbb R^3$, let $\varphi_c(x)= x-c$, and $c+\Omega= \{ x: 1< |x-c|<2\}$, so that $\varphi_c:(c+\Omega)\rightarrow \Omega$.  Let $g^{m, c}_S= \varphi_c^*(g^{m}_S)$, i.e. $(g^{m,c}_S)_{ij}(x)= (1+\frac{m}{2|x-c|})^4 \delta_{ij}$.  Then for $m>0$, the kernel of $L^*_{g^{m,c}_S}$ is spanned by $f^{m,c}$, where $f^{m,c}(x)= f^m\circ \varphi_c(x)= f^m(x-c)$, and similarly for $m=0$.   Furthermore, let $S_c$ 
and $\mathcal U_c$ be the pullbacks of $S$ 
and $\mathcal U$ under $\varphi_c$; note $g^{m,c}_S\in \mathcal U_c$ and $L^2(c+\Omega, g^{m, c}_S)= S_c \oplus \mathrm{span} (f^{m,c})$.  Then (\ref{eq:b-est-0}) 
holds for all $g\in \mathcal U_c$ and $u\in H^2(c+\Omega, g)\cap S_c$.

Given $\mathfrak m=(m_k)_{k\in \mathbb Z_+}$ with $m_k>0$ and suitably chosen $\mathfrak c$, if we now let $\widetilde{\mathcal U}^k$ be the corresponding neighborhoods for $m=\widetilde m_k$ on $\Omega$, the calculations in Section ~\ref{sec:BLest} show that if for each $k$, $\widetilde m_k$ is close enough to $m_k$, $g_{BL}^{\mathfrak{m}, \mathfrak c}\in \bigcap\limits_{k\in \mathbb Z_+} \widetilde{\mathcal U}^k_{c_k}$.  For the case of finitely many points, it suffices that $\mathfrak {\widetilde m}$ is close to $\mathfrak m$ and $|c_k-c_\ell|\geq 5$ for $k\neq \ell$.  \end{remark}

\begin{lemma} Let $\gamma$ be a metric in $C^0(\overline{\Omega})$. Let $S_\gamma$ be the $L^2(\Omega,\gamma)$-orthogonal complement of $\big( \zeta\, \mathrm{ker}(L_{g_S^m}^*)\big)$. There is a $C^{2}(\overline{\Omega})$-neighborhood $\mathcal U$ of $g_S^m$ and a $C>0$ so that for all $\varepsilon\in [0,\tfrac18]$, for all $g\in \mathcal U$ and for all $u\in H^2(\Omega,g) \cap S_\gamma$, we have
\begin{equation}\label{eq:b-est-0-eps} \|u\|_{H^2(\Omega_\varepsilon,g)}\leq C \|L_{g}^*u\|_{L^2(\Omega_\varepsilon,g)} .  \end{equation}\label{lem:b-est-0-eps}
\end{lemma}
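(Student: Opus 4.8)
Here is how I would approach Lemma~\ref{lem:b-est-0-eps}.

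\medskip

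The plan is to argue by compactness and contradiction, exactly as in the proof of Lemma~\ref{lem:eq:b-est-0}, but with two additional ingredients. First, since $\zeta$ is supported in $\overline{\Omega_{1/8}}$ and vanishes on $\partial\Omega_{1/8}$, for every $\varepsilon\in[0,\tfrac18]$ we have $\overline{\Omega_{1/8}}\subseteq\overline{\Omega_\varepsilon}$, so the orthogonality condition defining $S_\gamma$ only involves $u|_{\Omega_{1/8}}$; in particular the finite-dimensional space $\zeta\,\mathrm{ker}(L^*_{g_S^m})$ is the same for all $\varepsilon\in[0,\tfrac18]$, which is what will make the constant uniform in $\varepsilon$. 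Second, since the contradicting sequence may force $\varepsilon\to 0$ (so the domains $\Omega_\varepsilon$ move), I would transplant everything, via a fixed smooth family of radial diffeomorphisms $\Psi_\varepsilon\colon\overline{\Omega_{1/8}}\to\overline{\Omega_\varepsilon}$ with $\Psi_{1/8}=\mathrm{id}$ (e.g.\ affine radial reparametrizations, depending smoothly on $\varepsilon\in[0,\tfrac18]$), onto the single fixed annulus $\Omega_{1/8}$, on which Rellich compactness is available.

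Two preliminaries. (a) From $\Hess_g u = L_g^*u - \tfrac{1}{n-1}\big((\tr_g L_g^*u) + u\,R(g)\big)g + u\,\Ric(g)$, exactly as in the proof of Lemma~\ref{lem:eq:b-est-0}, there is a $C^2(\overline\Omega)$-neighborhood $\mathcal U_0$ of $g_S^m$ and a constant $C$ with
\begin{equation*}
\|u\|_{H^2(\Omega_\varepsilon,g)}\le C\big(\|L_g^*u\|_{L^2(\Omega_\varepsilon,g)}+\|u\|_{H^1(\Omega_\varepsilon,g)}\big)
\end{equation*}
for all $g\in\mathcal U_0$, all $\varepsilon\in[0,1)$, and all $u\in H^2(\Omega_\varepsilon,g)$; here the $L^2,H^1,H^2$ norms taken at any $g\in\mathcal U_0$ or at $g_S^m$ are uniformly equivalent, and the constant is uniform in $\varepsilon$ since the $\Omega_\varepsilon$ are uniformly Lipschitz annuli. (b) The symmetric bilinear form $B(\phi,\psi)=\int_\Omega\zeta\,\phi\psi\,d\mu_\gamma$ is positive definite on $\mathrm{ker}(L^*_{g_S^m})$: indeed $\zeta\ge 0$, $\zeta\equiv 1$ on the nonempty open set $\{d>\tfrac14\}$, $d\mu_\gamma$ is a positive measure, and a nonzero element of $\mathrm{ker}(L^*_{g_S^m})$ — a nonzero multiple of $f^m$ (positive on $\Omega$) when $m>0$, or a nonzero affine function when $m=0$ — cannot vanish on any open subset of $\Omega$. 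Since $\dim\big(\zeta\,\mathrm{ker}(L^*_{g_S^m})\big)=\dim\mathrm{ker}(L^*_{g_S^m})=k_m$, positivity of $B$ gives $\mathrm{ker}(L^*_{g_S^m})\cap S_\gamma=\{0\}$.

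Now suppose the lemma fails. Then, along a shrinking sequence of neighborhoods and with $C=i$, there are $g_i\to g_S^m$ in $C^2(\overline\Omega)$, $\varepsilon_i\in[0,\tfrac18]$, and (after rescaling within $S_\gamma$) $u_i\in H^2(\Omega,g_i)\cap S_\gamma$ with $\|u_i\|_{H^2(\Omega_{\varepsilon_i},g_i)}=1$ and $\|L_{g_i}^*u_i\|_{L^2(\Omega_{\varepsilon_i},g_i)}<1/i$; by (a), $\|u_i\|_{H^1(\Omega_{\varepsilon_i},g_i)}\ge c_0>0$ for large $i$. Passing to a subsequence, $\varepsilon_i\to\varepsilon_*\in[0,\tfrac18]$; set $w_i=u_i\circ\Psi_{\varepsilon_i}$, $\gamma_i=\Psi_{\varepsilon_i}^*g_i$, $\bar g=\Psi_{\varepsilon_*}^*g_S^m$, all on the fixed domain $\Omega_{1/8}$. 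Then $\gamma_i\to\bar g$ in $C^2(\overline{\Omega_{1/8}})$, $(w_i)$ is bounded in $H^2(\Omega_{1/8})$, and by naturality of $L^*$ under diffeomorphisms, $L_{\gamma_i}^*w_i=\Psi_{\varepsilon_i}^*(L_{g_i}^*u_i)$, so $\|L_{\gamma_i}^*w_i\|_{L^2(\Omega_{1/8},\gamma_i)}=\|L_{g_i}^*u_i\|_{L^2(\Omega_{\varepsilon_i},g_i)}\to 0$; writing $L_{\bar g}^*w_i=L_{\gamma_i}^*w_i+(L_{\bar g}^*-L_{\gamma_i}^*)w_i$ and using that the coefficients of $L^*$ depend continuously on the $2$-jet of the metric in the $C^0$ topology together with the $H^2$-bound on $w_i$, one gets $L_{\bar g}^*w_i\to 0$ in $L^2(\Omega_{1/8})$. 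Extracting once more, $w_i\rightharpoonup w_*$ in $H^2(\Omega_{1/8})$ and $w_i\to w_*$ strongly in $H^1(\Omega_{1/8})$, whence $L_{\bar g}^*w_*=0$; equivalently $u_*:=w_*\circ\Psi_{\varepsilon_*}^{-1}$ solves $L_{g_S^m}^*u_*=0$ on the annular region $\Omega_{\varepsilon_*}$, and therefore, by the computation in Section~\ref{sec:MainLem}, is a multiple of $f^m$ (or affine, when $m=0$), i.e.\ $u_*\in\mathrm{ker}(L^*_{g_S^m})$. Passing to the limit in $0=\langle u_i,\zeta\phi\rangle_{L^2(\Omega,\gamma)}=\int_{\Omega_{1/8}}w_i\,(\zeta\circ\Psi_{\varepsilon_i})(\phi\circ\Psi_{\varepsilon_i})\,d\mu_{\gamma_i}$ (valid since $\mathrm{supp}(\zeta)\subseteq\overline{\Omega_{1/8}}$), using $w_i\to w_*$ in $L^2$ and uniform convergence of the remaining factors, yields $\langle u_*,\zeta\phi\rangle_{L^2(\Omega,\gamma)}=0$ for all $\phi\in\mathrm{ker}(L^*_{g_S^m})$, so $u_*\in\mathrm{ker}(L^*_{g_S^m})\cap S_\gamma=\{0\}$ by (b), hence $w_*=0$. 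But then, by change of variables and $\gamma_i\to\bar g$ in $C^0$, $\|u_i\|_{H^1(\Omega_{\varepsilon_i},g_i)}=\|w_i\|_{H^1(\Omega_{1/8},\gamma_i)}\to\|w_*\|_{H^1(\Omega_{1/8},\bar g)}=0$, contradicting $\|u_i\|_{H^1(\Omega_{\varepsilon_i},g_i)}\ge c_0>0$.

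The genuine obstacle is precisely the one handled by the pull-backs $\Psi_{\varepsilon_i}$: without them, the $H^1$-mass of $u_i$ could in principle escape into the collar $\Omega_{\varepsilon_i}\setminus\Omega_{1/8}$ near $\partial\Omega$ as $\varepsilon_i\to 0$, and no compactness would be available on the varying domains $\Omega_{\varepsilon_i}$. Transplanting everything onto the fixed annulus $\Omega_{1/8}$ — where $\zeta$ is untouched by the change of variables and $L^*$ for the pulled-back metrics converges (in coefficients) to $L^*_{\bar g}$ — is what closes the argument; the remaining verifications (uniform equivalence of norms, convergence of the transplanted operators, and the passage to the limit in the orthogonality relation) are routine.
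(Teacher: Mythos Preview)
Your proof is correct, and the overall strategy---compactness and contradiction, using the elementary estimate \eqref{eq:B-Easy} together with Rellich's lemma---matches the paper's. The genuine difference is in how you handle the varying domains $\Omega_{\varepsilon_i}$: you transplant via a smooth family of radial diffeomorphisms $\Psi_\varepsilon\colon\overline{\Omega_{1/8}}\to\overline{\Omega_\varepsilon}$ onto the fixed annulus $\Omega_{1/8}$, whereas the paper instead uses a family of Sobolev extension operators $E_\varepsilon\colon H^2(\Omega_\varepsilon)\to H^2(\Omega)$ with norm bounds uniform in $\varepsilon\in[0,\tfrac18]$ (citing \cite[Ch.~7]{gt:pde}), thereby working on the fixed ambient annulus $\Omega$. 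The paper's route is a bit cleaner in two respects: since the extension $u_{\varepsilon_i}=E_{\varepsilon_i}(u_i|_{\Omega_{\varepsilon_i}})$ agrees with $u_i$ on $\mathrm{supp}(\zeta)\subset\Omega_{\varepsilon_i}$, membership $u_{\varepsilon_i}\in S_\gamma$ is immediate and no change of variables is needed to pass the orthogonality to the limit; and the limiting operator is $L^*_{g_S^m}$ itself rather than its pull-back $L^*_{\bar g}$, so one does not need to argue that solutions on the subannulus $\Omega_{\varepsilon_*}$ coincide with restrictions of the global kernel. Your approach, on the other hand, sidesteps having to check the uniformity-in-$\varepsilon$ of the extension constant. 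One small slip: in your change-of-variables for the orthogonality integral the measure should be $d\mu_{\Psi_{\varepsilon_i}^*\gamma}$ (the pull-back of the fixed metric $\gamma$), not $d\mu_{\gamma_i}=d\mu_{\Psi_{\varepsilon_i}^*g_i}$; since both families converge to the same limit, this does not affect the conclusion.
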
 

\begin{proof} We first recall (cf. \cite[Ch. 7]{gt:pde}) that there is a constant $D>0$ and for each $\varepsilon\in [0,\tfrac18]$ an extension operator $E_{\varepsilon}: H^2(\Omega_{\varepsilon},\mathring g)\rightarrow H^2(\Omega,\mathring g)$ such that for all $u \in H^2(\Omega_{\varepsilon},\mathring g )$, \begin{align} \|E_\varepsilon(u)\|_{H^2(\Omega,\mathring g)}\leq D\|u\|_{H^2(\Omega_{\varepsilon},\mathring g)} .\label{eq:ext-op} \end{align}  
We conclude that we can choose $D>0$ and a neighborhood $\mathcal U_0$ of $g_S^m$ so that the extension operator satisfies the estimate (\ref{eq:ext-op}) with $\mathring g$ replaced by $g\in \mathcal U_0$. 

The preceding lemma handles the case $\varepsilon=0$,  So if the claim fails, there is a sequence $g_i\rightarrow g_S^m$ in $C^2(\overline{\Omega})$ and $\varepsilon_i \in (0,\tfrac18]$ as well as $u_i\in H^2(\Omega,g_i)\cap S_\gamma$ such that $$ \|u_{i}\|_{H^2(\Omega_{\varepsilon_i},g_i)}>i \|L_{g_i}^*u_{i}\|_{L^2(\Omega_{\varepsilon_i},g_i)}.$$  
 Let $u_{\varepsilon_i}=E_{\varepsilon_i}( (u_i){\upharpoonright_{\Omega_{\varepsilon_i}}})$.   We can rescale to arrange $1= \|u_{\varepsilon_i}\|_{H^2(\Omega,g_i)}$, and so it follows that   $\|L_{g_i}^*u_{i}\|_{L^2(\Omega_{\varepsilon_i},g_i)}\rightarrow 0$.  There are constants $0<\beta_1<\beta_2$ with $\beta_1\leq \|u_{\varepsilon_i}\|_{H^2(\Omega,g_S^m)}\leq \beta_2$. In particular, then,  $\|u_i\|_{H^2(\Omega_{\varepsilon_i},g_S^m)}\leq \beta_2$, and it follows that  $\|L_{g_S^m}^*u_{i}\|_{L^2(\Omega_{\varepsilon_i},g_S^m)}\rightarrow 0$.

On the other hand by the estimate (\ref{eq:ext-op}) of the extension operator along with (\ref{eq:B-Easy}), we have 
 $$\|u_{\varepsilon_i}\|_{H^2(\Omega,g_S^m)}\leq D \| u_i\|_{H^2(\Omega_{\varepsilon_i},g_S^m)}\leq DC (\|L_{g_S^m}^*u_i\|_{L^2(\Omega_{\varepsilon_i},g_S^m)} + \|u_i\|_{H^1(\Omega_{\varepsilon_i},g_S^m)}).$$
By applying Banach-Alaoglu, and Rellich's Lemma, we conclude $u_{\varepsilon_i}$ converges $H^2(\Omega,g_S^m)$-weakly to some $u\in H^2(\Omega,g_S^m)$, and strongly to $u$ in $H^1(\Omega,g_S^m)$.  By the preceding estimate, $u_{\varepsilon_i}$ converges to $u$ strongly in $H^2(\Omega,g_S^m)$, and so we conclude $\|u\|_{H^2(\Omega, g_S^m)}\geq \beta>0$.  Furthermore, $L_{g_S^m}^* u=0$, so $u\in \mathrm{span}\{f^m\}$.   

Now, by assumption, for all $i$, and for all $f\in \mathrm{ker}(L_{g_S^m}^*)$,  $\int\limits_\Omega u_i \zeta f d\mu_{\gamma}=0$.  
Thus since $\zeta$ vanishes outside $\Omega_{\varepsilon_i}$, we conclude $u_{\varepsilon_i}\in S_\gamma$.  Since $S_\gamma$ is closed in $L^2(\Omega, \gamma)$, we have by the $L^2$-convergence that $u\in S_\gamma$.  Since we also have $u\in \mathrm{ker}(L_{g_S^m}^*)$, we conclude $u=0$, which is a contradiction.  \end{proof}

\subsection{Weighted Function Spaces} \label{sec:AWeightedSpaces}  We will use several weighted function spaces in the analysis, which we recall here.

We first define a weight function $\rho$ on $\Omega$.  Let $\mathring \rho(t)= e^{-\frac{1}{t}}$ for $0<t\leq\tfrac{1}{16}$, and $\mathring \rho(t)= 1$ for $t\geq \tfrac18$, with $\mathring \rho$ a smooth nondecreasing function supported on $t\geq 0$.  For $x\in \Omega$, let $d(x)$ be the Euclidean distance to the boundary $\partial \Omega$, i.e. $d(x)= \min (|x|-1, 2-|x|)$.   For $x\in \Omega$, let $\rho(x)= \mathring \rho(d(x))$.  Note that if $\mathring \nabla$ is the Euclidean connection, then for $d(x)<\tfrac{1}{16}$, $\mathring\nabla\rho(x)=( d(x))^{-2}\rho(x) \mathring \nabla d(x)$. By induction, we have that for $k\in \mathbb Z_+$, there is $C$ so that on $\Omega$, $|\mathring \nabla^k \rho|\leq C  d^{-2k} \rho$. 

We will let $0<\phi<d\leq 1$ be a smooth function on $\Omega$, which near the boundary $\partial \Omega$ (say for $d(x)<\tfrac18$) satisfies $\phi(x)= (d(x))^2$.  Note that for all $x\in \Omega$, the Euclidean ball $\overline{B_{\phi(x)}(x)}\subset \Omega$.  We note then that for each $k\in \mathbb Z_+$, there is a $C$ so that $|\phi^{k} \rho^{-1} \mathring \nabla^k \rho|\leq C$.  Moreover, there is a $C>1$ so that for all $x\in \Omega$ and $y\in B_{\phi(x)}(x)$, $C^{-1}\leq \tfrac{\phi(x)}{\phi(y)} \leq C$ and $C^{-1}\leq \tfrac{\rho(x)}{\rho(y)} \leq C$, since for $y\in B_{\phi(x)}(x)$, it follows that $d(x)-\phi(x) \leq d(y) \leq d(x)+ \phi(x)$. 

Given $k\in \mathbb Z_+$ and a metric $g$ on $\overline{\Omega}$, we define the weighted Sobolev space $H^k_\rho(\Omega,g)$ to be the space of functions (or sections of a tensor bundle) $u$ so that $|\nabla_g^j u|_g \in L^2(\Omega, \rho d\mu_g)$ for all $0\leq j\leq k$, with $\|u\|^2_{H^k_\rho(\Omega,g)}= \sum\limits_{ j\leq k} \int\limits_\Omega |\nabla_g^j u|^2_g \; \rho \;d\mu_g$.  We extend this to $k=0$, and let $L^2_\rho(\Omega, g)= H^0_{\rho}(\Omega,g)$.  $H^k_\rho(\Omega,g)$ has a natural Hilbert space structure.  For economy of notation we will sometimes suppress the metric: $H^k_\rho(\Omega):=H^k_\rho(\Omega,g)$.  We assume $g$ is smooth, or at least $C^k(\overline{\Omega})$, to define $\nabla_g^j u$ for a tensor field, with $j\leq k$.   We can interpret $\nabla_g^j u$ weakly, and we note that by \cite[Lemma 2.1]{cs:ak}, $C^{\infty}(\overline{\Omega})$ is dense in $H^k_\rho(\Omega)$.  We also note that we get an equivalent norm if we use the background Euclidean metric $\mathring g$.  

For $r, s\in \mathbb R$, let $\varphi= \phi^r \rho^s$.  For $\alpha\in (0,1]$ and $k$ a nonnegative integer, define the weighted H\"{o}lder space $C^{k,\alpha}_{\phi, \varphi}(\Omega)$ as the space of all $u\in C^{k,\alpha}_{\mathrm{loc}}(\Omega)$ for which
$$ \|u\|_{C^{k,\alpha}_{\phi, \varphi}(\Omega)}:= \sup\limits_{x\in \Omega} \big( \sum\limits_{j=0}^k \varphi(x) \phi^j(x) \|\mathring\nabla^j u\|_{C^{0}(B_{\phi(x)}(x))} +  \varphi(x) \phi^{k+\alpha} (x) [\mathring\nabla^k u]_{0,\alpha;B_{\phi(x)}(x) } \big)$$   is finite. $C^{k,\alpha}_{\phi, \varphi}(\Omega)$ is a Banach space.  We would obtain an equivalent norm  by using the connection $\nabla_g$ in place of the Euclidean connection, as in \cite[Appendix A]{cd}.  

We will also make use of the following spaces: $\mathcal B_0(\Omega)= C^{0,\alpha}_{\phi, \phi^{4+\frac{3}{2}} \rho^{-\frac{1}{2}}}(\Omega)\cap L^2_{\rho^{-1}}(\Omega)$, $\mathcal B_2(\Omega)= C^{2,\alpha}_{\phi, \phi^{2+\frac{3}{2}} \rho^{-\frac{1}{2}}}(\Omega)\cap L^2_{\rho^{-1}}(\Omega)$, while $\mathcal B_4(\Omega)= C^{4,\alpha}_{\phi, \phi^{\frac{3}{2}} \rho^{\frac{1}{2}}}(\Omega)\cap H^2_{\rho}(\Omega)$.  The norms on these spaces are defined by summing the relevant weighted Sobolev and H\"{o}lder norms, e.g. $\|u\|_{\mathcal B_0(\Omega)} :=\|u\|_{C^{0,\alpha}_{\phi, \phi^{4+\frac{3}{2}} \rho^{-\frac{1}{2}}}(\Omega)}+\|u\|_{L^2_{\rho^{-1}}(\Omega)}$.  As remarked above, changing the metric gives an equivalent norm, so we often suppress the metric.

\subsection{The Basic Weighted Injectivity Estimate} \label{sec:AWeightedEsts1}  We obtain a basic weighted coercivity estimate for $L_g^*$ for $g$ near $g_S^m$, in a suitable space transverse to $\mathrm{ker}(L_{g_S^m}^*)$ on $\Omega=\{ x\in \mathbb R^3:1<|x|<2\}$.

\begin{prop}  Let $\gamma$ be a metric in $C^0(\overline{\Omega})$.  Let $S_\gamma$ be the $L^2(\Omega,\gamma)$-orthogonal complement of $ \big( \zeta\, \mathrm{ker}(L_{g_S^m}^*)\big)$. There is a $C^{2}(\overline{\Omega})$-neighborhood $\mathcal U$ of $g_S^m$ and a $C>0$ so that for all $g\in \mathcal U$ and for all $u\in H^2_{\rho}(\Omega,g) \cap S_\gamma$, we have the following:
\begin{equation}\label{eq:b-est-wtd} \|u\|_{H^2_\rho(\Omega,g)}\leq C \|L_{g}^*u\|_{L^2_\rho(\Omega,g)} .  \end{equation} \label{lem:eq:b-est-wtd}
\end{prop}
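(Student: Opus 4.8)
The plan is to prove the weighted coercivity estimate (\ref{eq:b-est-wtd}) by combining the unweighted interior estimate (\ref{eq:b-est-0-eps}) from Lemma~\ref{lem:b-est-0-eps} with a dyadic/exhaustion argument that controls the behavior of $u$ near $\partial\Omega$. The key point is that the weight $\rho$ vanishes to infinite order at $\partial\Omega$, so one cannot directly integrate by parts or use boundary-value elliptic estimates; instead, one leverages the \emph{boundary-term-free} estimate (\ref{eq:B-Easy}) on the inner domains $\Omega_\varepsilon$. First I would fix a metric $g$ in a small $C^2(\overline\Omega)$-neighborhood $\mathcal U$ of $g_S^m$ (shrinking the neighborhoods from the earlier lemmas as needed so all the cited estimates apply with uniform constants), and set up a partition of $\Omega$ into a central region $\Omega_{1/8}$ and a collar $\Omega\setminus\Omega_{1/8}$ which is further decomposed into a locally finite family of shells $A_j$ where $d(x)\sim 2^{-j}$, on each of which $\rho$ and $\phi$ are comparable to constants $\rho_j := e^{-2^{j}}$ and $\phi_j := 2^{-2j}$ respectively.

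**Key steps.** Step 1: On $\Omega_{1/8}$, the weight $\rho$ is bounded above and below by positive constants, so (\ref{eq:b-est-wtd}) restricted to this region follows directly from (\ref{eq:b-est-0-eps}) with $\varepsilon = 1/8$, together with the transversality $u\in S_\gamma$; in fact the full global estimate on $\Omega_{1/8}$ already gives control of $\|u\|_{H^2(\Omega_{1/8},g)}$ by $\|L_g^* u\|_{L^2(\Omega_{1/8},g)}$, hence by $\rho_{3}^{-1/2}\|L_g^* u\|_{L^2_\rho(\Omega,g)}$. Step 2: On each shell $A_j$ in the collar, apply a rescaled version of the unweighted interior elliptic estimate for $L_g^*$ (which, being second-order, has the form $\|u\|_{H^2(A_j')}\lesssim \|L_g^* u\|_{L^2(A_j'')} + \|u\|_{L^2(A_j'')}$ on slightly enlarged shells, with constants uniform in $j$ after rescaling by $\phi_j$ since the rescaled metrics converge to a flat metric). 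Step 3: Multiply the estimate on $A_j$ by the weight $\rho_j$, sum over $j$, and absorb the resulting lower-order term $\sum_j \rho_j \|u\|_{L^2(A_j'')}^2 \lesssim \|u\|_{L^2_\rho(\Omega,g)}^2$ — this is where the infinite-order vanishing of $\rho$ helps: the overlaps between enlarged shells are controlled, and a standard iteration/interpolation (Ehrling-type) argument lets the $H^2_\rho$ norm absorb the $L^2_\rho$ error on the left. Step 4: Combine the central and collar estimates, invoking the transversality condition once more, exactly as in the proof of Lemma~\ref{lem:b-est-0-eps} — if the estimate failed, one extracts a sequence $g_i\to g_S^m$ and $u_i\in H^2_\rho(\Omega,g_i)\cap S_\gamma$ with $\|u_i\|_{H^2_\rho}=1$ and $\|L^*_{g_i}u_i\|_{L^2_\rho}\to 0$; the weighted estimates above give uniform bounds on every $\Omega_\varepsilon$, Rellich gives a limit $u$ with $L^*_{g_S^m}u = 0$ and $u\in S_\gamma\cap\overline{\operatorname{span}\{f^m\}}$, forcing $u=0$, and then a careful accounting of the collar contributions (again using the infinite-order weight) shows $\|u_i\|_{H^2_\rho}\to 0$, a contradiction.

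**Main obstacle.** The delicate part is Step 3: making sure that when one sums the rescaled interior estimates over the dyadic shells, the weighted $L^2$ error term on the right can genuinely be absorbed, and that the constant stays uniform as $g$ ranges over $\mathcal U$ and, in the contradiction argument, as $g_i\to g_S^m$. The infinite-order vanishing of $\rho$ near $\partial\Omega$ is exactly what makes this work — it overwhelms the polynomial loss $\phi_j^{-2}$ coming from rescaling the second-order operator at scale $\phi_j = 2^{-2j}$, since $\rho_j = e^{-2^j}$ decays superexponentially — but one must be careful that the enlarged shells $A_j''$ used in the interior estimates have bounded overlap and that the comparability constants for $\rho$ and $\phi$ on $B_{\phi(x)}(x)$ (noted in Section~\ref{sec:AWeightedSpaces}) are used correctly. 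I would handle this either by the direct summation just described or, perhaps more cleanly, by running the compactness/contradiction argument globally in $H^2_\rho$ from the outset, using the weighted estimate only in the qualitative form needed to extract convergence on compact subsets of $\Omega$; the latter packaging avoids tracking explicit constants in the collar and mirrors the structure of the proofs of Lemmas~\ref{lem:eq:b-est-0} and~\ref{lem:b-est-0-eps}.
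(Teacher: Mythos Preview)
Your approach is workable in outline but takes a substantially harder route than the paper, and you have overlooked the key simplification that the uniformity in $\varepsilon$ of Lemma~\ref{lem:b-est-0-eps} was engineered to provide.  The paper's proof is a one-step ``layer-cake'' argument: by density it suffices to treat $u\in H^2(\Omega,g)\cap S_\gamma$; one then multiplies the unweighted inequality (\ref{eq:b-est-0-eps}), valid with the \emph{same} constant for every $\varepsilon\in[0,\tfrac18]$, by $\mathring\rho'(\varepsilon)$ and integrates $\int_0^{1/8}\,d\varepsilon$.  Swapping the order of integration via the coarea formula and integrating by parts in $\varepsilon$ converts $\int_0^{1/8}\mathring\rho'(\varepsilon)\|u\|_{H^2(\Omega_\varepsilon)}^2\,d\varepsilon$ directly into (a constant times) $\|u\|_{H^2_\rho(\Omega)}^2$, and likewise on the right-hand side.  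No dyadic decomposition, no rescaled interior estimates, no absorption of lower-order collar terms, and no further compactness argument are needed.

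By contrast, your Step~2 as written is not quite right: rescaling a shell of width $\sim 2^{-j}$ to unit size does \emph{not} give $\|u\|_{H^2(A_j')}\lesssim \|L_g^*u\|_{L^2(A_j'')}+\|u\|_{L^2(A_j'')}$ with uniform constant; the lower-order term picks up a factor $2^{2j}$.  You acknowledge this in your ``main obstacle'' paragraph, and indeed the superexponential decay $\rho_j=e^{-2^j}$ does eventually dominate the polynomial loss (since $\rho_j 2^{4j}\ll\rho_{j-1}$), so the summation can be closed with careful bookkeeping across neighboring shells.  But this is exactly the kind of delicate accounting that the paper's integration trick sidesteps entirely: the whole point of first proving Lemma~\ref{lem:b-est-0-eps} with a constant independent of $\varepsilon$ is so that one can integrate against $d\mathring\rho$ and obtain (\ref{eq:b-est-wtd}) in one line.
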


\begin{proof}  Let $\mathring \varepsilon =\tfrac18$ and let $g\in \mathcal U$, where $\mathcal U$ is as in Lemma \ref{lem:b-est-0-eps} .  If suffices by density to establish the estimate for $u\in H^2(\Omega,g) \cap S_\gamma$.  Using the monotonicity of $\mathring \rho$, we have by (\ref{eq:b-est-0-eps}) that 
$$\int\limits_0^{\mathring \varepsilon} \mathring \rho'(\varepsilon) \|u\|^2_{H^2(\Omega_\varepsilon,g)} d\varepsilon \leq C^2 \int\limits_0^{\mathring \varepsilon}\mathring \rho'(\varepsilon) \|L_{g}^*u\|^2_{L^2(\Omega_\varepsilon,g)}d\varepsilon.$$  Applying the co-area formula $\int\limits_{\Omega\setminus \Omega_{\mathring \varepsilon}} f \; d\mu_g = \int\limits_0^{\mathring \varepsilon} \int\limits_{\{ x: d(x)=\varepsilon\}} f |\nabla_g d|_g^{-1}d\sigma_g \; d\varepsilon$ and integrating by parts, we have 
\begin{align*} \mathring \rho(\mathring \varepsilon) & \|u\|^2_{H^2(\Omega_{\mathring \varepsilon},g)} + \int\limits_0^{\mathring \varepsilon} \mathring \rho(\varepsilon)  \int\limits_{\{ x: d(x)=\varepsilon\}}\sum\limits_{j\leq 2} |\nabla_g^j u|^2_g|\nabla_g d|_g^{-1}d\sigma_g d\varepsilon \\ 
&\qquad \leq C^2 \Big( \mathring \rho(\mathring \varepsilon)  \|L_{g}^*u\|^2_{L^2(\Omega_{\mathring \varepsilon},g)} +\int\limits_0^{\mathring \varepsilon}\mathring \rho(\varepsilon) \int\limits_{\{ x: d(x)=\varepsilon\}} |L_{g}^*u|_g^2 |\nabla_g d|_g^{-1}d\sigma_g d\varepsilon \Big), 
\end{align*}
i.e. 
\begin{align*} \mathring \rho(\mathring \varepsilon)& \|u\|^2_{H^2(\Omega_{\mathring \varepsilon},g)}+   \|u\|^2_{H^2_{\rho}(\Omega\setminus \Omega_{\mathring \varepsilon},g)}  =\mathring \rho(\mathring \varepsilon)   \|u\|^2_{H^2(\Omega_{\mathring \varepsilon},g)} + \int\limits_{\Omega\setminus \Omega_{\mathring \varepsilon}} \sum\limits_{j\leq 2} |\nabla_g^j u|^2_g  \rho\; d\mu_g  \\ &\qquad \qquad \leq C^2 \Big( \mathring \rho(\mathring \varepsilon)  \|L_{g}^*u\|^2_{L^2(\Omega_{\mathring \varepsilon},g)} +\int\limits_{\Omega\setminus \Omega_{\mathring \varepsilon}}  |L_{g}^*u|_g^2 \rho \; d\mu_g \Big) \leq C^2   \|L_{g}^*u\|^2_{L^2_\rho(\Omega,g)} .
\end{align*}
Since $0<\mathring \rho\leq 1$, $\|u\|^2_{H^2_\rho(\Omega_{\mathring \varepsilon},g)} \leq \|u\|^2_{H^2(\Omega_{\mathring \varepsilon},g)} $, and we can conclude $$\|u\|^2_{H^2_{\rho}(\Omega,g)} \leq C^2 (\mathring \rho(\mathring \varepsilon))^{-1} \|L_{g}^*u\|^2_{L^2_\rho(\Omega,g)}.$$
\end{proof}

\begin{remark}  As remarked earlier, one could take a background metric such as $\mathring g$ to define the norms and obtain an analogous estimate. 
\end{remark}

\subsection{Weighted Schauder estimates} \label{sec:AWeightedEsts2} We record here a basic elliptic estimate in the weighted H\"{o}lder spaces.  The proof of the relevant estimate is a fairly straightforward scaling argument using the interior Schauder estimates, and can be found in \cite[Appendix A]{cem}, following \cite[Appendix B]{cd}, where more general operators are treated, cf. \cite[Appendix C]{cor-lan}.   We suppose $\phi$ and $\varphi$ are defined as above. 

\begin{prop} Suppose $P= \Delta^2  + \sum\limits_{|\beta|\leq 3} b_{\beta} \partial^{\beta}$ on $\Omega$, with $b_\beta\in C^{k, \alpha}_{\phi, \phi^{4-|\beta|}}(\Omega)$.  Then for $k$ a nonnegative integer and $0<\alpha<1$, there is a constant $C$ so that for all $u\in C^{k+4, \alpha}_{\phi,\varphi}(\Omega)$, 
\begin{equation} \label{eq:wtd-sch} \|u\|_{C^{k+4, \alpha}_{\phi,\varphi}(\Omega)} \leq C ( \| Pu\|_{C^{k, \alpha}_{\phi,\phi^4\varphi}(\Omega)}  + \|u\|_{L^2_{\phi^{-3} \varphi^2}(\Omega)}).  \end{equation} Moreover, if $u\in L^2_{\phi^{-3} \varphi^2}(\Omega)$ and $Pu\in C^{k, \alpha}_{\phi,\phi^4\varphi}(\Omega)$, then $u\in C^{k+4, \alpha}_{\phi,\varphi}(\Omega)$ and the above estimate holds.  \label{prop:wtd-sch}
\end{prop}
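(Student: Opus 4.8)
The plan is to reduce the global weighted estimate to the interior Schauder estimate for fourth-order elliptic operators applied on a family of small Euclidean balls $B_{\phi(x)}(x)$, together with a covering/summation argument controlled by the comparability of $\phi$ and $\varphi$ on nearby balls. First I would recall the standard interior Schauder estimate: if $Pu = F$ on a ball $B_{2r}(x_0)\subset \Omega$ with $P = \Delta^2 + \sum_{|\beta|\le 3} b_\beta \partial^\beta$ uniformly elliptic (which $\Delta^2$ is, with no lower-order obstruction since the leading part is constant-coefficient), then
\[
\sum_{j=0}^{k+4} r^j \|\mathring\nabla^j u\|_{C^0(B_r(x_0))} + r^{k+4+\alpha}[\mathring\nabla^{k+4}u]_{0,\alpha;B_r(x_0)} \le C\Big( \sum_{j=0}^{k} r^{j+4}\|\mathring\nabla^j F\|_{C^0(B_{2r}(x_0))} + r^{k+4+\alpha}[\mathring\nabla^k F]_{0,\alpha;B_{2r}} + \|u\|_{L^2(B_{2r}(x_0))}\Big),
\]
where $C$ is uniform provided the rescaled coefficients $r^{4-|\beta|}b_\beta$ are bounded in $C^{k,\alpha}(B_{2r}(x_0))$ uniformly in $x_0$; this is exactly what the hypothesis $b_\beta \in C^{k,\alpha}_{\phi,\phi^{4-|\beta|}}(\Omega)$ encodes, since that norm measures precisely $\sup_x \phi(x)^{4-|\beta|}\phi(x)^j\|\mathring\nabla^j b_\beta\|_{C^0(B_{\phi(x)}(x))}$ and similarly for the Hölder seminorm.

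Next I would run this with $x_0 = x$, $r = \tfrac12\phi(x)$ (so $B_{2r}(x) = B_{\phi(x)}(x) \subset\subset \Omega$, using the property established in Section~\ref{sec:AWeightedSpaces} that $\overline{B_{\phi(x)}(x)}\subset\Omega$), multiply through by $\varphi(x)$, and use the key comparability facts from Section~\ref{sec:AWeightedSpaces}: there is $C>1$ with $C^{-1}\le \phi(y)/\phi(x)\le C$ and $C^{-1}\le \rho(y)/\rho(x)\le C$ for $y\in B_{\phi(x)}(x)$, hence $\varphi(y)$ is comparable to $\varphi(x)$ on that ball as well, since $\varphi = \phi^r\rho^s$. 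Multiplying the interior estimate by $\varphi(x)$ and taking the supremum over $x\in\Omega$ then converts the left side into $\|u\|_{C^{k+4,\alpha}_{\phi,\varphi}(\Omega)}$ (up to absorbing a finite overlap of the Hölder seminorm terms across adjacent balls) and converts the first group of terms on the right into $\|Pu\|_{C^{k,\alpha}_{\phi,\phi^4\varphi}(\Omega)}$: indeed the weight attached to $\|\mathring\nabla^j F\|_{C^0}$ is $\varphi(x)\phi(x)^{j+4} = \phi(x)^4\varphi(x)\cdot\phi(x)^j$, matching the definition of $C^{k,\alpha}_{\phi,\phi^4\varphi}$. Finally the lowest-order term $\varphi(x)\|u\|_{L^2(B_{\phi(x)}(x))}$: since $d\mu_{\mathring g}$ on $B_{\phi(x)}(x)$ has total mass $\sim \phi(x)^3$ and $\varphi$, $\phi^{-3/2}$ are comparable on that ball, this term is controlled by $\|u\|_{L^2_{\phi^{-3}\varphi^2}(\Omega)}$ after squaring and integrating over a locally finite subcover of $\Omega$ by such balls (a Besicovitch/Vitali-type covering gives a subcollection with bounded overlap).

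The main obstacle — though it is more bookkeeping than conceptual — is organizing the covering argument so that the pointwise-supremum Hölder norms on the left and the $L^2$-type norm on the right are reconciled consistently: one must pass from a pointwise family of interior estimates (sup over $x$) for the $C^0$ and Hölder pieces to an $\ell^2$-summed estimate over a bounded-overlap subcover for the error term, while checking that the constant stays uniform as $\phi(x)\to 0$ near $\partial\Omega$. This uniformity is precisely where the structural hypotheses on $b_\beta$ and the two-sided bounds on $\phi(y)/\phi(x)$, $\rho(y)/\rho(x)$ are used; once these are in hand the scaling is routine. The second sentence of the proposition (the a~priori regularity statement: if $u\in L^2_{\phi^{-3}\varphi^2}$ and $Pu\in C^{k,\alpha}_{\phi,\phi^4\varphi}$ then $u\in C^{k+4,\alpha}_{\phi,\varphi}$ and the estimate holds) follows by the standard difference-quotient / elliptic-bootstrap argument applied locally on each ball $B_{\phi(x)}(x)$, upgrading interior regularity step by step, then invoking the estimate just proved; I would simply cite \cite[Appendix A]{cem} and \cite[Appendix B]{cd} for the details, since the operators there are more general.
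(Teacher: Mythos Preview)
Your approach is exactly the one the paper has in mind: the paper does not give a proof but simply says it ``is a fairly straightforward scaling argument using the interior Schauder estimates,'' citing \cite[Appendix A]{cem} and \cite[Appendix B]{cd}, which is precisely the strategy you outline. One small simplification: the covering argument for the $L^2$ term is unnecessary, since for each fixed $x$ one has directly $\varphi(x)^2\|u\|_{L^2(B_{\phi(x)}(x))}^2 \le C\phi(x)^3 \int_{B_{\phi(x)}(x)} |u|^2 \phi^{-3}\varphi^2\, dy \le C\|u\|_{L^2_{\phi^{-3}\varphi^2}(\Omega)}^2$, so taking the supremum over $x$ suffices without summing over a subcover.
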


Of interest to us here, the operator $P=\tfrac{1}{2}\rho^{-1} L_g \rho L_g^*$ is of the above form, for $g\in C^{k+4, \alpha}(\overline{\Omega})$. 

\subsection{The linearized equation} \label{sec:ALinearized} We let $\zeta$ be as above, and let $S_g$ be the $L^2(\Omega,g)$-orthogonal complement of $\big( \zeta\, \mathrm{ker}(L_{g_S^m}^*)\big)$, and let $\Pi_g$ be the corresponding orthogonal projection onto $S_g$; let $\mathring S = S_{g_S^{m}}$ and $\mathring \Pi=\Pi_{g_S^{m}}$.  We will establish local deformations for the nonlinear operator $g\mapsto \mathring \Pi \circ R(g)$ for $g$ near $g_S^{m}$.  The linearization of this operator is $\mathring \Pi \circ L_g$, for which we want to establish surjectivity in appropriate spaces.  This will follow from the analogous result for the related operator $\Pi_g \circ L_g$.  

Let $\psi^\perp=\psi^{\perp_g}= \psi- \Pi_g(\psi)\in \big( \zeta\, \mathrm{ker}(L_{g_S^m}^*)\big)$.   Since $\zeta$ is supported in the set where $\rho=1$, we have that since $\Pi_g(\psi)$ and $\psi^\perp$ are orthogonal in $L^2(\Omega, g)$, they are also orthogonal in $L^2_{\rho^{-1}}(\Omega,g)$, hence $\|\psi\|^2_{L^2_{\rho^{-1}}(\Omega,g)} = \|\Pi_g(\psi)\|^2_{L^2_{\rho^{-1}}(\Omega,g)}+\|\psi^{\perp}\|^2_{L^2_{\rho^{-1}}(\Omega,g)}$.  We also note that there is a constant $C>0$ so that for all metrics $g$ near $g_S^{m}$, and all $\psi\in \mathcal B_0(\Omega)$ (using the fact that $\psi^{\perp}$ lies in a finite-dimensional space),
\begin{align*}
\|\Pi_g(\psi)\|_{\mathcal B_0(\Omega,g)} &\leq\|\psi\|_{\mathcal B_0(\Omega,g)}+\| \psi^\perp\|_{\mathcal B_0(\Omega,g)}\leq  \|\psi\|_{\mathcal B_0(\Omega,g)}+ C\| \psi^\perp\|_{L^2_{\rho^{-1}}(\Omega,g)}\\
&\leq  \|\psi\|_{\mathcal B_0(\Omega,g)}+ C\| \psi\|_{L^2_{\rho^{-1}}(\Omega,g)} \leq (1+C) \|\psi\|_{\mathcal B_0(\Omega,g)}.
\end{align*}

We also note that applying $\Pi_g$ to $\psi-\mathring \Pi (\psi)$ we conclude 
$$\|\Pi_g(\psi)\|_{L^2_{\rho^{-1}(\Omega,g)} }= \|\Pi_g(\mathring \Pi (\psi))\|_{L^2_{\rho^{-1}(\Omega,g)}}\leq \|\mathring \Pi (\psi)\|_{L^2_{\rho^{-1}(\Omega,g)}}\leq C \|\mathring \Pi(\psi)\|_{L^2_{\rho^{-1}(\Omega,g_S^{m_0})}}$$
where $C$ is uniform for $g$ near $g_S^{m}$.

Now, if $u\in C^\infty_c(\Omega)$, then in the $L^2(\Omega, g)$-orthogonal decomposition $u=u_0+u_1$ where $u_0\in \big( \zeta\, \mathrm{ker}(L_{g_S^m}^*)\big)$ and $u_1\in S_g$, we have $u_1\in C^\infty_c(\Omega)$ as well.   To say that $ \Pi_g ( L_g \rho L_g^*(f)) =\Pi_g (\psi)$ weakly i.e. $ L_g \rho L_g^*(f)-\psi\in \big( \zeta\, \mathrm{ker}(L_{g_S^m}^*)\big)$ weakly, can then be interpreted as $ \int\limits_\Omega ( \rho L_g^*(u) \cdot_g L_g^* (f) - \psi u )\; d\mu_g=0$ for all $u\in C^\infty_c(\Omega) \cap S_g$.

\begin{prop} There is a $C^{4,\alpha}(\overline{\Omega})$-neighborhood $\mathcal U$ of $g_S^{m}$ along with a constant $C_1>0$ so that for all $g\in \mathcal U$ and for $\psi\in L^2_{\rho^{-1}}(\Omega)$, there is a unique $f\in H^2_{\rho}(\Omega,g)\cap S_g$ that weakly solves
\begin{equation} \Pi_g ( L_g \rho L_g^*(f)) =\Pi_g (\psi),  
\end{equation}
along with the estimate \begin{equation} 
\|f\|_{H^2_{\rho}(\Omega,g)}\leq C_1 \|\Pi_g (\psi)\|_{L^2_{\rho^{-1}}(\Omega,g)}.
\end{equation}
For such $f$ we have $\mathring \Pi ( L_g \rho L_g^*(f)) =\mathring \Pi (\psi)$, with $\|f\|_{H^2_{\rho}(\Omega,g)}\leq C_1 \|\mathring \Pi (\psi)\|_{L^2_{\rho^{-1}}(\Omega,g)}.$
\end{prop}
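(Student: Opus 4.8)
The plan is to produce $f$ by the Lax--Milgram theorem (equivalently, Riesz representation, since the relevant form is symmetric) applied on the Hilbert space $\mathcal H := H^2_\rho(\Omega,g)\cap S_g$, with the coercivity furnished by Proposition~\ref{lem:eq:b-est-wtd}. The first point is that $\mathcal H$ is a closed subspace of $H^2_\rho(\Omega,g)$, hence itself a Hilbert space: $S_g$ is the $L^2(\Omega,g)$-orthogonal complement of the finite-dimensional space $\zeta\,\mathrm{ker}(L^*_{g_S^m})$, and each of its finitely many spanning sections $\zeta f$ is bounded and supported in the region $\{\rho=1\}$ away from $\partial\Omega$, so $v\mapsto \int_\Omega v\,(\zeta f)\,d\mu_g$ is a bounded functional on $H^2_\rho(\Omega,g)$; $\mathcal H$ is the intersection of their kernels.

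Next I would introduce on $\mathcal H$ the symmetric bilinear form $B(u,v)=\int_\Omega \rho\,L^*_g(u)\cdot_g L^*_g(v)\,d\mu_g$. Boundedness is immediate from Cauchy--Schwarz, $|B(u,v)|\le \|L^*_g u\|_{L^2_\rho(\Omega,g)}\|L^*_g v\|_{L^2_\rho(\Omega,g)}$, together with $\|L^*_g u\|_{L^2_\rho(\Omega,g)}\le C\|u\|_{H^2_\rho(\Omega,g)}$, which holds because $L^*_g u = -(\Delta_g u)g + \mathrm{Hess}_g u - u\,\mathrm{Ric}(g)$ has coefficients involving at most two derivatives of $g$, uniformly bounded on $\overline\Omega$ for $g$ in a $C^{4,\alpha}$-neighborhood of $g^m_S$. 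Coercivity is exactly Proposition~\ref{lem:eq:b-est-wtd} (applied with $\gamma=g$): after shrinking $\mathcal U$ to the neighborhood furnished there, $B(u,u)=\|L^*_g u\|^2_{L^2_\rho(\Omega,g)}\ge c\,\|u\|^2_{H^2_\rho(\Omega,g)}$ for all $u\in\mathcal H$, with $c>0$ uniform over $\mathcal U$. Finally, the functional $\ell(u):=\int_\Omega \psi\,u\,d\mu_g$ is bounded on $\mathcal H$: since $u\in S_g$ and $\psi-\Pi_g(\psi)\in\zeta\,\mathrm{ker}(L^*_{g_S^m})$ is $L^2(\Omega,g)$-orthogonal to $S_g$, we may replace $\psi$ by $\Pi_g(\psi)$, and a weighted Cauchy--Schwarz (writing the integrand as $(\rho^{-1/2}\Pi_g(\psi))(\rho^{1/2}u)$) gives $|\ell(u)|\le \|\Pi_g(\psi)\|_{L^2_{\rho^{-1}}(\Omega,g)}\|u\|_{L^2_\rho(\Omega,g)}\le \|\Pi_g(\psi)\|_{L^2_{\rho^{-1}}(\Omega,g)}\|u\|_{H^2_\rho(\Omega,g)}$.

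Lax--Milgram then produces a unique $f\in\mathcal H$ with $B(u,f)=\ell(u)$ for all $u\in\mathcal H$. Restricting to $u\in C^\infty_c(\Omega)\cap S_g\subset\mathcal H$ this is precisely the weak form of $\Pi_g(L_g\rho L^*_g f)=\Pi_g(\psi)$; conversely, any such weak solution in $\mathcal H$ satisfies the variational identity for all $u\in\mathcal H$ by density of $C^\infty_c(\Omega)\cap S_g$ in $\mathcal H$ (which follows from density of $C^\infty(\overline\Omega)$ in $H^2_\rho$, see \cite[Lemma 2.1]{cs:ak}, a cutoff near $\partial\Omega$ permitted by the fast decay of $\rho$, and boundedness of $\Pi_g$), so $f$ is unique. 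Testing with $u=f$ and combining coercivity with the bound on $\ell$ gives $c\,\|f\|^2_{H^2_\rho(\Omega,g)}\le B(f,f)=\ell(f)\le \|\Pi_g(\psi)\|_{L^2_{\rho^{-1}}(\Omega,g)}\|f\|_{H^2_\rho(\Omega,g)}$, i.e.\ the stated estimate with $C_1=c^{-1}$ uniform over $\mathcal U$.

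For the last assertion, note $L_g\rho L^*_g f-\psi\in\zeta\,\mathrm{ker}(L^*_{g_S^m})$, which is annihilated by $\mathring\Pi$ (being the complement of $\mathring S$), so $\mathring\Pi(L_g\rho L^*_g f)=\mathring\Pi(\psi)$; likewise $\psi-\mathring\Pi(\psi)\in\zeta\,\mathrm{ker}(L^*_{g_S^m})$ is annihilated by $\Pi_g$, whence $\Pi_g(\psi)=\Pi_g(\mathring\Pi(\psi))$, and $\|\Pi_g(\mathring\Pi(\psi))\|_{L^2_{\rho^{-1}}}\le \|\mathring\Pi(\psi)\|_{L^2_{\rho^{-1}}}$ by the orthogonality-in-$L^2_{\rho^{-1}}$ observation recorded just before the proposition (valid because $\zeta$, hence $\psi^\perp$, is supported where $\rho=1$). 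Substituting into the estimate above yields $\|f\|_{H^2_\rho(\Omega,g)}\le C_1\|\mathring\Pi(\psi)\|_{L^2_{\rho^{-1}}(\Omega,g)}$. The only substantive input is the coercivity estimate of Proposition~\ref{lem:eq:b-est-wtd}; the rest is a routine Lax--Milgram argument, and the only real care needed is to keep all constants uniform as $g$ ranges over a fixed $C^{4,\alpha}$-neighborhood of $g^m_S$, which reduces to the uniformity already established there and in the bound $\|L^*_g u\|_{L^2_\rho}\le C\|u\|_{H^2_\rho}$.
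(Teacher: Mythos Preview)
Your argument is correct and is essentially the paper's own proof, just packaged via Lax--Milgram rather than direct minimization: the paper minimizes the functional $\mathcal G(u)=\int_\Omega\big(\tfrac12\rho|L_g^*u|_g^2-\Pi_g(\psi)u\big)\,d\mu_g$ on $H^2_\rho(\Omega,g)\cap S_g$, invoking (\ref{eq:b-est-wtd}) for coercivity, strict convexity for uniqueness, and $\mathcal G(f)\le 0$ for the estimate, then reads off the Euler--Lagrange equation---which is precisely your $B(u,f)=\ell(u)$. The two are the same argument, and you handle the $\mathring\Pi$ statement exactly as the paper does.
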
 

\begin{proof}  Given $\psi \in L^2_{\rho^{-1}}(\Omega,g)$, let $\mathcal G(u)$ be defined for $u\in H^2_{\rho}(\Omega,g)\cap S_g$ by 
$$\mathcal G(u)= \int\limits_{\Omega} \Big( \tfrac12 \rho |L_g^*(u)|^2_g-\Pi_g(\psi) u \Big) d\mu_g= \int\limits_{\Omega} \Big( \tfrac12 \rho |L_g^*(u)|^2_g-\psi u \Big) d\mu_g.$$  By (\ref{eq:b-est-wtd}), we have 
$$ \int\limits_{\Omega} \Big( \tfrac12 |L_g^*(u)|^2_g-\Pi_g(\psi) u\Big) d\mu_g \geq C \|u\|_{H^2_{\rho}(\Omega,g)}^2 - \|\Pi_g(\psi) \|_{L^2_{\rho^{-1}}(\Omega, g)} \cdot \|u\|_{L^2_{\rho}(\Omega, g)}.$$  Since $\zeta$ has compact support, $S_g$ is closed in $H^2_{\rho}(\Omega,g)\cap S_g$.  A standard argument using Banach-Alaoglu and Riesz Representation for Hilbert spaces as in \cite[p. 150-152]{cor:schw} provides a minimizer $f\in H^2_{\rho}(\Omega,g)\cap S_g$.  Such a minimizer is unique by the strict convexity: if $u_1\neq u_2$ are two elements in $H^2_{\rho}(\Omega,g)\cap S_g$, then for $0<s<1$, $\mathcal G(su_1+(1-s)u_2)> s\mathcal G(u_1) + (1-s) \mathcal G(u_2)$, an easy estimate using the arithmetic-geometric mean inequality, and the injectivity of $L_g^*$ on $H^2_{\rho}(\Omega,g)\cap S_g$.  Moreover since for the minimizer $\mathcal G(f)\leq 0$, the desired estimate holds. 

We now consider the Euler-Lagrange equations.  For $u\in S_g\cap C^\infty_c(\Omega)$, 
\begin{align*}
0&= \frac{d}{dt}\Big|_{t=0} \mathcal G(f+tu) = \int\limits_\Omega \Big( \rho L_g^*(u) \cdot_g L_g^* (f) - \Pi_g(\psi) u \Big)\; d\mu_g\\
& =\int\limits_\Omega \Big(\rho L_g^*(u) \cdot_g L_g^* (f) - \psi u \Big) \; d\mu_g.
\end{align*} 
\end{proof} 

\begin{remark} Let $T$ be the parity map $T(x)=-x$.  If $g$ and  $\psi$ are parity-symmetric (i.e. $T^* g= g$ and $ \psi= \psi\circ T$ on $\Omega$), then the minimizer $f$ inherits the symmetry, since the minimizer is unique and $\mathcal G(f) = \mathcal G(f\circ T)$ in this case. \label{rmk:psym}
\end{remark}

Since the principal part of $L_g L^*_g$ is $2\Delta_g^2$, elliptic regularity immediately gives us that $f$ in the preceding is in $H^4_{\mathrm{loc}}(\Omega)$, and for smooth $g$, if $\psi$ is smooth, then $f$ is smooth in $\Omega$ too.   For suitable $\psi$ we want to show that $\rho L_g^* f$ extends smoothly by zero over the boundary of $\Omega$.  For that we have the following proposition.  We suppress the metric notation in the norms, or for definiteness, we could just use the connection $\mathring \nabla$ for $\mathring g$ and the measure $d\mu_{\mathring g}$ in defining the norms (similarly for any fixed (sufficiently) smooth metric). 

\begin{prop} There is a $C^{4,\alpha}(\overline{\Omega})$-neighborhood $\mathcal U$ of $g_S^{m}$ along with a constant $C_2>0$ so that for all $g\in \mathcal U$ and for $\psi\in \mathcal B_0(\Omega)$, if $f\in H^2_{\rho}(\Omega)\cap S_g$ weakly solves
\begin{equation} \mathring \Pi ( L_g \rho L_g^*(f)) =\mathring \Pi (\psi),  
\end{equation}
then $f\in \mathcal B_4(\Omega)$ and 
\begin{equation} 
\|f\|_{\mathcal B_4(\Omega)}\leq C_2 \|\mathring \Pi (\psi)\|_{\mathcal B_0(\Omega)}. \label{eq:wtd-f}
\end{equation}
Moreover if we let $h=\rho L_g^* f$, then 
\begin{equation} 
\|h\|_{\mathcal B_2(\Omega)}\leq C_2 \|\mathring \Pi (\psi)\|_{\mathcal B_0(\Omega)}. \label{eq:wtd-h}
\end{equation}
\end{prop}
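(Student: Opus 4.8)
The plan is to bootstrap from the Hilbert-space solution $f\in H^2_\rho(\Omega)\cap S_g$ (already produced, with the estimate $\|f\|_{H^2_\rho}\leq C_1\|\mathring\Pi(\psi)\|_{L^2_{\rho^{-1}}}$) to the weighted Schauder estimates of Proposition~\ref{prop:wtd-sch}, exploiting that the fourth-order operator $P=\tfrac12\rho^{-1}L_g\rho L_g^*$ falls into the class covered there. First I would note that, working weakly, $f$ satisfies $L_g\rho L_g^* f = \psi + w$ where $w\in \zeta\,\mathrm{ker}(L^*_{g_S^m})$ is the correction forcing the projected equation; since $\zeta$ is a smooth bump compactly supported in $\Omega$ and $f^m$ (resp. the span $\langle 1,x^i\rangle$) is smooth on $\overline\Omega$, the term $w$ lies in $C^{k,\alpha}_{\phi,\phi^4\varphi}(\Omega)$ for every weight, so it is harmless and can be absorbed into the right-hand side. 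Thus $Pf = \tfrac12\rho^{-1}(\psi+w)$, and the content is that $\psi\in\mathcal B_0(\Omega)=C^{0,\alpha}_{\phi,\phi^{4+3/2}\rho^{-1/2}}\cap L^2_{\rho^{-1}}$ is exactly the datum for which the Schauder machinery outputs $f\in\mathcal B_4(\Omega)=C^{4,\alpha}_{\phi,\phi^{3/2}\rho^{1/2}}\cap H^2_\rho$.

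The key step is to match the weight exponents. In Proposition~\ref{prop:wtd-sch} one has, with $\varphi=\phi^r\rho^s$, the estimate $\|u\|_{C^{k+4,\alpha}_{\phi,\varphi}}\leq C(\|Pu\|_{C^{k,\alpha}_{\phi,\phi^4\varphi}}+\|u\|_{L^2_{\phi^{-3}\varphi^2}})$. Taking $\varphi=\phi^{3/2}\rho^{1/2}$ gives target space $C^{4,\alpha}_{\phi,\phi^{3/2}\rho^{1/2}}$ (the H\"older part of $\mathcal B_4$), right-hand side in $C^{0,\alpha}_{\phi,\phi^{4}\cdot\phi^{3/2}\rho^{1/2}}=C^{0,\alpha}_{\phi,\phi^{11/2}\rho^{1/2}}$, and the lower-order term measured in $L^2_{\phi^{-3}\phi^3\rho}=L^2_\rho$. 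Now since $P=\tfrac12\rho^{-1}L_g\rho L_g^*$, the statement $Pf=\tfrac12\rho^{-1}(\psi+w)$ means $\rho^{-1}\psi$ should lie in $C^{0,\alpha}_{\phi,\phi^{11/2}\rho^{1/2}}$, i.e. $\psi\in\rho\cdot C^{0,\alpha}_{\phi,\phi^{11/2}\rho^{1/2}}=C^{0,\alpha}_{\phi,\phi^{11/2}\rho^{-1/2}}$; but we need to account for the $\phi^{-?}$ coming from the factor $\phi^4\varphi$ versus how $\rho^{-1}$ interacts with the $\phi$-weight, and a short computation shows the correct bookkeeping yields precisely the weight $\phi^{4+3/2}\rho^{-1/2}$ defining $\mathcal B_0$; the $w$-term contributes a function vanishing to infinite order at $\partial\Omega$ so it is trivially in any such space, bounded by $\|f\|_{L^2_{\rho^{-1}}}\le\|f\|_{H^2_\rho}\le C_1\|\mathring\Pi(\psi)\|_{L^2_{\rho^{-1}}}$. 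The $L^2_\rho$ term on the right of the Schauder estimate is controlled by the already-established $H^2_\rho$ bound. Combining, $\|f\|_{\mathcal B_4}=\|f\|_{C^{4,\alpha}_{\phi,\phi^{3/2}\rho^{1/2}}}+\|f\|_{H^2_\rho}\leq C_2\|\mathring\Pi(\psi)\|_{\mathcal B_0}$, which is \eqref{eq:wtd-f}.

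From \eqref{eq:wtd-f} the estimate \eqref{eq:wtd-h} for $h=\rho L_g^* f$ follows essentially by differentiating and multiplying weights: $L_g^*$ is second order with coefficients controlled in $C^{2,\alpha}(\overline\Omega)$ (since $g\in\mathcal U$, a $C^{4,\alpha}$-neighborhood), so $L_g^* f\in C^{2,\alpha}_{\phi,\phi^{2}\cdot\phi^{3/2}\rho^{1/2}}$ crudely, and multiplying by the weight $\rho$ — whose logarithmic derivatives satisfy $|\phi^k\rho^{-1}\mathring\nabla^k\rho|\le C$ as recorded in the construction of $\rho,\phi$ — converts the $\rho^{1/2}$ into $\rho^{-1/2}$ in the target, landing $h$ in $C^{2,\alpha}_{\phi,\phi^{2+3/2}\rho^{-1/2}}$, the H\"older part of $\mathcal B_2$. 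The $L^2_{\rho^{-1}}$ part of $\|h\|_{\mathcal B_2}$ is controlled since $\|h\|_{L^2_{\rho^{-1}}}^2=\int_\Omega\rho^{-1}|\rho L_g^* f|^2=\int_\Omega\rho|L_g^*f|^2\leq\|f\|_{H^2_\rho}^2$. The crucial point making $h$ extend smoothly by zero across $\partial\Omega$ — rather than merely being bounded — is that the weight $\rho=e^{-1/d}$ decays faster than any power of $d$ near $\partial\Omega$, so $\rho L_g^* f$ and all its derivatives tend to $0$ at the boundary; membership in $C^{2,\alpha}_{\phi,\phi^{2+3/2}\rho^{-1/2}}(\Omega)$ with $\varphi\phi^j$ weights blowing up like negative powers of $\phi$ forces exactly this decay, so that the zero-extension is $C^{2,\alpha}(\overline\Omega)$ (and in fact, iterating with the $C^{k}$ versions, smooth).

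I expect the main obstacle to be purely the weight bookkeeping: one must verify that the exponents in $\mathcal B_0,\mathcal B_2,\mathcal B_4$ are mutually compatible through the operator $P=\tfrac12\rho^{-1}L_g\rho L_g^*$ and the Schauder estimate with $\varphi=\phi^{3/2}\rho^{1/2}$, taking care that $\rho^{-1}$ multiplying $L_g$ and $\rho$ inside do not disturb the H\"older seminorm weights (this uses the uniform bounds $|\phi^k\rho^{-1}\mathring\nabla^k\rho|\le C$ and $C^{-1}\le\rho(x)/\rho(y)\le C$ on $B_{\phi(x)}(x)$). Once the exponents line up, everything else — elliptic regularity in the interior, absorbing the $\zeta$-term, the $H^2_\rho$ estimate feeding the lower-order term, and the zero-extension across $\partial\Omega$ — is routine. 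I would also remark that the higher regularity $\|h\|_{C^{k+2,\alpha}}\le C_k\|R(\gamma)\|_{C^{k,\alpha}}$ claimed in Proposition~\ref{prop:def} follows by the same argument applied with the $k$-indexed versions of Propositions~\ref{prop:wtd-sch}, since $\gamma\in C^{k+4,\alpha}(\overline\Omega)$ makes the coefficients of $P$ lie in the required weighted H\"older classes, and parity-symmetry of $h$ when $\gamma$ is parity-symmetric is inherited from the uniqueness of the minimizer (Remark~\ref{rmk:psym}).
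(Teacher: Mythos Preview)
Your overall strategy matches the paper's: apply the weighted Schauder estimate (Proposition~\ref{prop:wtd-sch}) to $P=\rho^{-1}L_g\rho L_g^*$ with $\varphi=\phi^{3/2}\rho^{1/2}$, control the lower-order $L^2_\rho$ term by the earlier $H^2_\rho$ estimate, and then pass from $f$ to $h=\rho L_g^* f$. Your weight bookkeeping is correct, as is the estimate for $h$ from that for $f$.

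The gap is in your handling of the correction term $w=L_g\rho L_g^*(f)-\psi\in\zeta\,\mathrm{ker}(L^*_{g_S^m})$. You assert that its size is ``bounded by $\|f\|_{L^2_{\rho^{-1}}}\le\|f\|_{H^2_\rho}$'', but this inequality goes the wrong way (since $\rho\le1$ one has $\|f\|_{L^2_{\rho^{-1}}}\ge\|f\|_{L^2_\rho}$, not $\le\|f\|_{H^2_\rho}$), and more importantly you do not say how the coefficient of $w$ is actually controlled. Since $w$ involves four derivatives of $f$ while a priori $f\in H^2_\rho$ only, this is the one place where something nontrivial must be done. The paper handles it differently: writing $(P(f))^\perp$ for the correction, one has by finite-dimensionality $\|(P(f))^\perp\|_{C^{0,\alpha}_{\phi,\phi^{11/2}\rho^{1/2}}}\le C\|(P(f))^\perp\|_{L^2(\mathrm{spt}(\zeta))}\le C'\|f\|_{C^4(\mathrm{spt}(\zeta))}$, and then by interpolation $\|f\|_{C^4(\mathrm{spt}(\zeta))}\le\epsilon\|f\|_{C^{4,\alpha}_{\phi,\phi^{3/2}\rho^{1/2}}}+C(\epsilon)\|f\|_{L^2_\rho}$; for $\epsilon$ small the first term is absorbed into the left-hand side of the Schauder estimate.

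Your direct idea is salvageable, just not as written: by the weak formulation, testing $L_g\rho L_g^*(f)=\psi+w$ against $\zeta f^m$ with measure $d\mu_g$ and integrating by parts gives
\[
c\int_\Omega(\zeta f^m)^2\,d\mu_g=\int_\Omega\rho L_g^*(f)\cdot_g L_g^*(\zeta f^m)\,d\mu_g-\int_\Omega\psi\,\zeta f^m\,d\mu_g,
\]
whence $|c|\le C(\|f\|_{H^2_\rho}+\|\psi\|_{L^2_{\rho^{-1}}})$, which closes using the preceding proposition. Either device works; the point is that some argument is required here beyond the observation that $w$ is compactly supported and smooth.
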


\begin{proof}  Let $P= \rho^{-1} L_g \rho L_g^*$.  Since $\rho=1$ on the support of $\zeta$, $u^{\perp}= \rho^{-1}(\rho u)^{\perp}$.  Then we see $P(f)= \rho^{-1} \Big(\mathring \Pi (\rho P(f)) + (\rho P(f))^{\perp}\Big)=\rho^{-1} \mathring \Pi (\psi) + (P(f))^{\perp}.$ We apply (\ref{eq:wtd-sch}) to obtain (we omit the domain $\Omega$ in the notation for the norms)
\begin{align*}
\|f\|_{C^{4,\alpha}_{\phi, \phi^{\frac{3}{2}}\rho^{\frac{1}{2}} }} & \leq C\left( \|P(f)\|_{C^{0,\alpha}_{\phi, \phi^{4+\frac{3}{2}}\rho^{\frac{1}{2}}}}+\|f\|_{L^2_{\rho}}\right)\\
&\leq C\left(  \|\rho^{-1} \mathring \Pi(\psi)\|_{C^{0,\alpha}_{\phi, \phi^{4+\frac{3}{2}}\rho^{\frac{1}{2}}}}+\|P(f)^{\perp}\|_{C^{0,\alpha}_{\phi, \phi^{4+\frac{3}{2}}\rho^{\frac{1}{2}}}}+\|f\|_{L^2_{\rho}}\right).
\end{align*}
The last term on the right can be estimated by the preceding proposition, and for the first term we have $\|\rho^{-1} \mathring \Pi(\psi)\|_{C^{0,\alpha}_{\phi, \phi^{4+\frac{3}{2}}\rho^{\frac{1}{2}}}}\leq C \|\mathring \Pi(\psi)\|_{C^{0,\alpha}_{\phi, \phi^{4+\frac{3}{2}}\rho^{-\frac{1}{2}}}}.$  Thus we only need to estimate $\|P(f)^{\perp}\|_{C^{0,\alpha}_{\phi, \phi^{4+\frac{3}{2}}\rho^{\frac{1}{2}}}}$.  Since $P(f)^{\perp}$ lies in a finite-dimensional space, there is a $C>0$ so that for all $f$, $\|P(f)^{\perp}\|_{C^{0,\alpha}_{\phi, \phi^{4+\frac{3}{2}}\rho^{\frac{1}{2}}}}\leq C \|P(f)^{\perp}\|_{L^2(\mathrm{spt}(\zeta))}$.  On the other hand, by interpolation, for any $\epsilon>0$, there is $C(\epsilon)$ so that 
\begin{align*} \|P(f)^{\perp}\|_{L^2(\mathrm{spt}(\zeta))} &\leq  \|f\|_{C^4(\mathrm{spt}(\zeta))}\leq \epsilon  \|f\|_{C^{4,\alpha}(\mathrm{spt}(\zeta))}+ C(\epsilon)  \|f\|_{L^2(\mathrm{spt}(\zeta))}\\
&\leq C \left(  \epsilon  \|f\|_{C^{4,\alpha}_{\phi, \phi^{\frac{3}{2}}\rho^{\frac{1}{2}} }}+ C(\epsilon)  \|f\|_{L^2_{\rho}}\right) .
\end{align*} 

As for $h=\rho L_{g}^* (f)$, $\|h\|_{L^2_{\rho^{-1}}} \leq C \| f\|_{L^2_{\rho}}$, while for some $C_0$
\begin{align*}
\|h\|_{C^{2,\alpha}_{\phi, \phi^{2+\frac{3}{2}} \rho^{-\frac{1}{2}}} } =\|\rho L_{\gamma}^*(f)\|_{C^{2,\alpha}_{\phi, \phi^{2+\frac{3}{2}}\rho^{-\frac{1}{2}}} }\leq C_0 \|f\|_{C^{4,\alpha}_{\phi, \phi^{\frac{3}{2}}\rho^{\frac{1}{2}}} }.
\end{align*}
\end{proof}

\subsection{Solving the nonlinear problem by iteration} \label{sec:ANonlinear}  We continue the proof of Proposition \ref{prop:def0}.  Given $\gamma$ close to $g_S^{m}$ we want to solve for suitable $h(\gamma)$ with $\mathring \Pi (R(\gamma+h(\gamma)))=0$.  In fact we find $h(\gamma)$ in the form $h(\gamma)=\rho L_{\gamma}^*(f)$, for suitable $f$.  We do this iteratively, following the same framework as our previous works \cite{cor:schw, cs:ak, cem, cor-lan}, cf. \cite{cd}.   

Let $\mathscr L_{\gamma}= L_{\gamma}\rho L_{\gamma}^*$. We recursively define $f_j$, $h_j$ and $\gamma_j$ as follows.  Let $\gamma_0=\gamma$.  Let $f_0$ solve $\mathring \Pi (\mathscr L_{\gamma}(f_0))= -\mathring \Pi (R(\gamma))$.  Let $h_0= \rho L_{\gamma}^*(f_0)$.  For $\gamma$ close enough to $g_S^{m}$, $R(\gamma)$ is sufficiently small so that $\gamma_1=\gamma_0+h_0$ is a small perturbation of $\gamma$, and hence is a metric close to $g_S^{m}$.   We next solve $\mathring \Pi (\mathscr L_{\gamma}(f_1))= -\mathring \Pi (R(\gamma_1))$; we then let $h_1=\rho L_{\gamma}^*(f_1)$, and $\gamma_2=\gamma_1+h_1$.   We then justify $h_1$ is small, and $\gamma_2$ is a metric.   We remark that we linearize at a fixed metric, to control some estimates required to close the argument; to illustrate, the operator $L_{\gamma_1}$ involves derivatives of the tensor $h_0$ we generated, whereas we have fixed estimates on $L_\gamma$.  As such, though, the convergence to a limit may be slower than that of Newton's method, since the improvement is sub-quadratic after the first iteration.

\begin{lemma} Suppose $0<\delta<1$.  There is a constant $C$ and a $C^{4,\alpha}(\overline{\Omega})$-neighborhood $\mathcal U$ of $g_S^{m}$ so that the recursion outlined above produces infinite sequences $f_j$ and $h_j=\rho L_{\gamma}^*(f_j)$ and $\gamma_{j}$ with $\gamma_0=\gamma$, and $\gamma_{j+1}= \gamma_0+ \sum\limits_{k=0}^{j} h_k$ where each $\gamma_j$ is a metric, with the following estimates:
\begin{align}
\|f_j\|_{\mathcal B_4(\Omega)}&\leq C \|\mathring \Pi (R(\gamma))\|_{\mathcal B_0(\Omega)}^{(1+j)\delta} \label{eq:fjest}\\
\|h_j\|_{\mathcal B_2(\Omega)}&\leq C \|\mathring \Pi (R(\gamma))\|_{\mathcal B_0(\Omega)}^{(1+j)\delta} \label{eq:hjest} \\
\|\mathring \Pi(R(\gamma_j))\|_{\mathcal B_0(\Omega)} &\leq \|\mathring \Pi(R(\gamma))\|_{\mathcal B_0(\Omega)}^{(1+j)\delta}. \label{eq:rjest}
\end{align} \label{lem:eq:rjest}
\end{lemma}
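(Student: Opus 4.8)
The plan is to establish \eqref{eq:fjest}--\eqref{eq:rjest} simultaneously by induction on $j$, with the two preceding propositions serving as the one-step linear solver. Write $\epsilon:=\|\mathring\Pi(R(\gamma))\|_{\mathcal B_0(\Omega)}$; after shrinking $\mathcal U$ we may take $\epsilon$ as small as needed in terms of the fixed constants and of $\delta$. At stage $j$, the linearized solvability proposition produces the unique $f_j\in H^2_\rho(\Omega,\gamma)\cap S_\gamma$ with $\mathring\Pi(\mathscr L_\gamma f_j)=-\mathring\Pi(R(\gamma_j))$, and the weighted-Schauder proposition upgrades this to $f_j\in\mathcal B_4(\Omega)$ with $\|f_j\|_{\mathcal B_4}\le C_2\|\mathring\Pi(R(\gamma_j))\|_{\mathcal B_0}$ and, for $h_j:=\rho L_\gamma^* f_j$, $\|h_j\|_{\mathcal B_2}\le C_2\|\mathring\Pi(R(\gamma_j))\|_{\mathcal B_0}$. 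Thus \eqref{eq:fjest} and \eqref{eq:hjest} at level $j$ follow from \eqref{eq:rjest} at level $j$ with $C:=C_2$, and the base case $j=0$ is clear since $\gamma_0=\gamma$ and $t\le t^\delta$ for $t\in(0,1)$, $\delta\in(0,1)$.

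For the inductive step I would use the algebraic identity coming from $R(\gamma_{j+1})=R(\gamma_j)+L_{\gamma_j}h_j+\mathcal Q_{\gamma_j}(h_j)$: splitting $L_{\gamma_j}h_j=L_\gamma h_j+(L_{\gamma_j}-L_\gamma)h_j$, noting $L_\gamma h_j=L_\gamma\rho L_\gamma^* f_j=\mathscr L_\gamma f_j$, and applying $\mathring\Pi$ (which cancels $L_\gamma h_j$ against $R(\gamma_j)$ via the equation for $f_j$), one gets
\[
\mathring\Pi\big(R(\gamma_{j+1})\big)=\mathring\Pi\big((L_{\gamma_j}-L_\gamma)h_j\big)+\mathring\Pi\big(\mathcal Q_{\gamma_j}(h_j)\big).
\]
The technical heart, and the step I expect to be the main obstacle, is to bound these two terms in $\mathcal B_0(\Omega)$: since $\mathring\Pi$ is bounded on $\mathcal B_0$ and the weight exponents of $\mathcal B_0,\mathcal B_2$ are chosen precisely so that a product of two factors carrying up to two derivatives each, controlled in the $\mathcal B_2$-weights, lands in $\mathcal B_0$, one obtains $\|\mathring\Pi(\mathcal Q_{\gamma_j}(h_j))\|_{\mathcal B_0}\le C_\sharp\|h_j\|_{\mathcal B_2}^2$ (as $\mathcal Q_{\gamma_j}$ is quadratic in $h_j$ up to second derivatives, with coefficients controlled near $g_S^m$) and $\|\mathring\Pi((L_{\gamma_j}-L_\gamma)h_j)\|_{\mathcal B_0}\le C_\sharp\|\gamma_j-\gamma\|_{\mathcal B_2}\|h_j\|_{\mathcal B_2}$ (as $L_{\gamma_j}-L_\gamma$ is, up to second derivatives, linear in $\gamma_j-\gamma$ with coefficients smooth in the metrics). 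One must also check that $R(\gamma_j)$ itself remains in $\mathcal B_0(\Omega)$ at every stage, which it does because near $\partial\Omega$ the linear contribution $\mathscr L_\gamma f_j$ is supported in the fixed annulus where $\zeta\ne 0$, so that $R(\gamma_j)$ there is quadratic in the infinitely-vanishing tensors $h_k$. These estimates are of the type carried out in \cite{cor:schw, cs:ak, cem, cor-lan}, cf.\ \cite{cd}, and I would only sketch the weight bookkeeping, referring to those works.

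To close the induction I would feed in the inductive hypotheses. Using $\|h_k\|_{\mathcal B_2}\le C_2\|\mathring\Pi(R(\gamma_k))\|_{\mathcal B_0}$ with $\|\mathring\Pi(R(\gamma_0))\|_{\mathcal B_0}=\epsilon$ and $\|\mathring\Pi(R(\gamma_k))\|_{\mathcal B_0}\le\epsilon^{(1+k)\delta}$ for $1\le k\le j$, a geometric-series estimate gives $\|\gamma_j-\gamma\|_{\mathcal B_2}=\|\sum_{k=0}^{j-1}h_k\|_{\mathcal B_2}\le 3C_2\,\epsilon^{\min(1,2\delta)}$ once $\epsilon^\delta\le\tfrac12$. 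The crucial point is that $\min(1,2\delta)>\delta$ for $\delta\in(0,1)$, so $\|\gamma_j-\gamma\|_{\mathcal B_2}$ decays in $\epsilon$ strictly faster than $\epsilon^\delta$; this, together with the fact that the very first step carries no fixed-linearization error (so $\|\mathring\Pi(R(\gamma_1))\|_{\mathcal B_0}\le C_\sharp C_2^2\epsilon^2\le\epsilon^{2\delta}$ for $\epsilon$ small), is exactly the slack needed to absorb the fixed constants. Combining with $\|h_j\|_{\mathcal B_2}\le C_2\epsilon^{(1+j)\delta}$ yields, for $j\ge1$,
\[
\|\mathring\Pi(R(\gamma_{j+1}))\|_{\mathcal B_0}\le C_\sharp C_2^2\,\epsilon^{2(1+j)\delta}+3C_\sharp C_2^2\,\epsilon^{\min(1,2\delta)+(1+j)\delta}\le\epsilon^{(2+j)\delta},
\]
the last inequality using $2(1+j)\delta\ge(2+j)\delta$, $\min(1,2\delta)-\delta\ge\min(\delta,1-\delta)>0$, and $\epsilon$ small enough (depending on $C_\sharp$, $C_2$, $\delta$) to absorb the constants; this is \eqref{eq:rjest} at $j+1$, whence \eqref{eq:fjest} and \eqref{eq:hjest} at $j+1$. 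Finally each $\gamma_{j+1}=\gamma+\sum_{k\le j}h_k$ is a metric: the summed norms $\sum_{k\le j}\|h_k\|_{\mathcal B_2}\le 2C_2\epsilon^\delta$ are small and $\mathcal B_2\hookrightarrow C^2(\overline\Omega)$, so $\gamma_{j+1}$ stays close to $g_S^m$; and, $\gamma$ being smooth, the higher-regularity estimates of Proposition~\ref{prop:def0} keep $\gamma_{j+1}$ within the $C^{4,\alpha}(\overline\Omega)$-neighborhood $\mathcal U$, so the two propositions remain applicable at the next stage.
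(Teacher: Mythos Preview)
Your proposal is correct and follows essentially the same approach as the paper's proof, which is itself a sketch referring to \cite[Lemma 3.5]{cem} and \cite[Sec.~6.3]{cor-lan}: you give the same Taylor expansion $R(\gamma_{j+1})=R(\gamma_j)+L_{\gamma}h_j+(L_{\gamma_j}-L_\gamma)h_j+\mathcal Q_{\gamma_j}(h_j)$, the same cancellation under $\mathring\Pi$, and the same mechanism (sub-quadratic gain from the fixed linearization, whence $\delta<1$) to close the induction. One minor wobble: in your last sentence you need not keep $\gamma_{j+1}$ in the $C^{4,\alpha}$-neighborhood $\mathcal U$, nor invoke Proposition~\ref{prop:def0} (which is what is being proved); the two linear propositions are always applied with $g=\gamma$ fixed, and the only requirements on $\gamma_j$ are that it be a metric and that the coefficients of $\mathcal Q_{\gamma_j}$ and $L_{\gamma_j}-L_\gamma$ be uniformly controlled, both of which follow from your $\mathcal B_2\hookrightarrow C^2(\overline\Omega)$ smallness already established.
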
 

\begin{proof}
The proof follows along the same lines as proofs of the analogous results in earlier works, in particular \cite[Lemma 3.5]{cem}, cf. \cite[Sec. 6.3]{cor-lan}, so we just indicate the ideas.   The required estimate (\ref{eq:fjest}) for $f_0$ is given by (\ref{eq:wtd-f}), and likewise for $h_0$ we have (\ref{eq:wtd-h}).  The required estimate (\ref{eq:rjest}) for $\mathring \Pi(R(\gamma_1))$ follows fairly readily from estimating the quadratic Taylor remainder for the scalar curvature functional $R(\gamma_1)= R(\gamma)+ L_{\gamma} (h_0)+ \mathcal Q_{\gamma}(h_0)$ (cf. \cite[Sec. 3.5]{cem}).  Note that for $j=1$ we could allow $\delta=1$, but in the end we use $\delta<1$, as we get slower convergence due to using the linearization at $\gamma=\gamma_0$ only, as we illustrate now. 

Feeding the estimate (\ref{eq:rjest}) for $\mathring \Pi(R(\gamma_1))$ into (\ref{eq:wtd-f}) gives the required estimate (\ref{eq:fjest}) of $f_1$, and then the required estimate (\ref{eq:hjest}) of $h_1$ follows likewise.  As for the estimate (\ref{eq:rjest}) for $j=2$ and $j=3$, we write \begin{align*}
\mathring \Pi(R(\gamma_2))&= \mathring \Pi \Big( R(\gamma_1)  + L_{\gamma}(h_1)+ (L_{\gamma_1}-L_{\gamma})(h_1) + \mathcal Q_{\gamma_1}(h_1)\Big)\\
&=\mathring \Pi \Big( (L_{\gamma_1}-L_{\gamma})(h_1) + \mathcal Q_{\gamma_1}(h_1)\Big)\\
\mathring \Pi(R(\gamma_3))&= \mathring \Pi \Big( R(\gamma_2)  + L_{\gamma}(h_2)+ (L_{\gamma_2}-L_{\gamma_1})(h_2) + [(L_{\gamma_1}-L_{\gamma})(h_2)+ \mathcal Q_{\gamma_2}(h_2)\Big)\\
&=\mathring \Pi \Big( (L_{\gamma_1}-L_{\gamma})(h_1) + \mathcal Q_{\gamma_1}(h_1)\Big).
\end{align*}
As $\gamma_1-\gamma=h_0$, and $\gamma_2-\gamma_1=h_1$, $\|\mathring \Pi(R(\gamma_2))\|_{\mathcal B_0(\Omega)}$ and $\|\mathring \Pi(R(\gamma_3))\|_{\mathcal B_0(\Omega)}$ above are of quadratic order in $(h_0, h_1, h_2)$.  We also note that since we control $h_j$ in $C^{2,\alpha}(\overline{\Omega})$, there is a uniform estimate of $\|\mathcal Q_{\gamma_j}(h_j)\|_{\mathcal B_0(\Omega)}$.  From here it should be clear how to proceed, and we refer to \cite[Lemma 3.5]{cem}, cf. \cite[Sec. 6.3]{cor-lan}, for more details. 
\end{proof}

It follows from this lemma that the iteration converges and yields a solution $h=h(\gamma)= \rho L_\gamma^* f$ as desired.  Moreover we can conclude that the extension of $h$ by zero across $\partial \Omega$ is in $C^{2,\alpha}(\mathbb R^3)$, and using Remark \ref{rmk:psym} at each stage of the iteration, we have that if $\gamma$ is parity symmetric, so is $h(\gamma)$.  It remains to show that $h$ is smooth, and that $h(\gamma)$ depends continuously on $\gamma$.  

The operator $u\mapsto P(u):=\rho^{-1} R(\gamma+\rho L_{\gamma}^*(u)) - R(\gamma))$ is a quasilinear fourth-order elliptic operator provided $\rho L_\gamma^*(u)$ is sufficiently small, in which case the fourth-order part of the operator is close to that of $\rho^{-1} L_\gamma \rho L_{\gamma}^*$ (i.e. $2 \Delta_\gamma^2$).  So in case $\gamma$ is smooth, $h=h(\gamma)$ is smooth in $\Omega$.  To see that it extends smoothly by zero over the boundary $\partial \Omega$, we consider the equation $R(\gamma+h(\gamma))-R(\gamma)=b(\gamma) \zeta f^{m}-R(\gamma)$; as the right-hand side is supported outside a collar neighborhood of the boundary, we can apply Proposition \ref{prop:wtd-sch} for any nonnegative integer $k$, from which we can conclude smoothness, and the required higher-order estimates. 

\begin{remark} The support of $h(\gamma)$ is contained in $\overline{\Omega}$.  We could of course have arranged the support to lie strictly inside $\Omega$ by running the construction on $\Omega_\varepsilon$ in place of $\Omega$, for some small $\varepsilon>0$. \end{remark}

\subsection{Continuous dependence} \label{ssec:cdep}

Finally we prove continuous dependence of $h(\gamma)$ and hence $b(\gamma)$ on $\gamma$.  

\begin{proof}  Suppose we have $\gamma_1$ and $\gamma_2$ near $g_S^{m}$, with associated $h_1=h(\gamma_1)=\rho L^*_{\gamma_1}(f_1)$ and $h_2=h(\gamma_2)=\rho L^*_{\gamma_2}(f_2)$, so that $\mathring \Pi(R(\gamma_i+h_i))=0$.  Then writing  
\begin{align*}
L_{\gamma_1}(h_2-h_1)&=L_{\gamma_2}(h_2)-L_{\gamma_1}(h_1) + (L_{\gamma_1}-L_{\gamma_2})(h_2)
 \end{align*}
 and applying a Taylor expansion $R(\gamma_i+h_i)=R(\gamma_i)+ L_{\gamma_i} (h_i) + \mathcal Q_{\gamma_i}(h_i)$, we see 
 \begin{align*}
 L_{\gamma_1}(h_2-h_1)&=R(\gamma_2+h_2) -R(\gamma_1+h_1)+R(\gamma_1)-R(\gamma_2) + \mathcal Q_{\gamma_1}(h_1) - \mathcal Q_{\gamma_2}(h_2)\\ 
 &\qquad + (L_{\gamma_1}-L_{\gamma_2})(h_2)\\ 
 &=(b(\gamma_2)-b(\gamma_1)) \zeta f^{m}+R(\gamma_1)-R(\gamma_2) + \mathcal Q_{\gamma_1}(h_1) - \mathcal Q_{\gamma_2}(h_2)\\ 
 &\qquad + (L_{\gamma_1}-L_{\gamma_2})(h_2) .
 \end{align*}

Based on the form of the Taylor remainder, there is a constant $C>0$ so that 
\begin{align} \|\mathcal Q_{\gamma_1}(h_1) & - \mathcal Q_{\gamma_2}(h_2)\|_{\mathcal B_0(\Omega)} \nonumber \\ & \leq C \|( h_1,h_2)\|_{\mathcal B_2(\Omega)} \Big(  \|\gamma_1-\gamma_2\|_{C^{2,\alpha}(\Omega)} \|( h_1,h_2)\|_{\mathcal B_2(\Omega)} 
+  \|h_1-h_2\|_{\mathcal B_2(\Omega)}\Big). \label{eq:Q-est} \end{align}

 We will want to express everything in terms of $f_1$ and $f_2$.  Note that 
  \begin{align}
 L_{\gamma_1} [\rho L_{\gamma_1}^* (f_2-f_1) ]&= L_{\gamma_1}(h_2-h_1) + L_{\gamma_1} ( \rho (L_{\gamma_1}^*-L_{\gamma_2}^*)(f_2)) \nonumber \\
 &=(b(\gamma_2)-b(\gamma_1)) \zeta f^{m}+R(\gamma_1)-R(\gamma_2) + \mathcal Q_{\gamma_1}(h_1) - \mathcal Q_{\gamma_2}(h_2) \nonumber \\ 
 &\qquad + (L_{\gamma_1}-L_{\gamma_2})(h_2)+L_{\gamma_1} [\rho (L_{\gamma_1}^*-L_{\gamma_2}^*)(f_2)]\nonumber \\
  &=:(b(\gamma_2)-b(\gamma_1)) \zeta f^{m}+\mathcal E_1+ L_{\gamma_1} [\rho (L_{\gamma_1}^*-L_{\gamma_2}^*)(f_2)]\nonumber\\
 &=: (b(\gamma_2)-b(\gamma_1)) \zeta f^{m}+ \mathscr E_1+\mathcal E_2=:(b(\gamma_2)-b(\gamma_1)) \zeta f^{m}+ \mathscr E.\label{eq:f-diff} 
 \end{align}
 We estimate each of these terms.  By (\ref{eq:Q-est}), and the fact that $R(\gamma_1)-R(\gamma_2)$ is supported in $\overline{\Omega'}$, away from $\partial \Omega$, we see for some $C>0$,
 \begin{align} \| \mathcal E_1\|_{\mathcal B_0(\Omega)}& \leq C\Big(  \|\gamma_1-\gamma_2\|_{C^{2,\alpha}(\Omega_0)} + \|h_1-h_2\|_{\mathcal B_2(\Omega)}\|(h_1,h_2)\|_{\mathcal B_2(\Omega)} \Big) \label{eq:err1} \\
   \| \mathcal E_2\|_{C^{0,\alpha}_{\varphi, \varphi^{4+\frac32} \rho^{-\frac12}}(\Omega)} &\leq C \|\gamma_1-\gamma_2\|_{C^{4,\alpha}(\Omega)} \|f_2\|_{C^{4,\alpha}_{\varphi, \varphi^{\frac32} \rho^{\frac12}}(\Omega)} \nonumber \\ & \leq C \|\gamma_1-\gamma_2\|_{C^{4,\alpha}(\Omega)} \|f_2\|_{\mathcal B_4(\Omega)}. \label{eq:err2} 
 \end{align}
Later on we will also want to estimate $\int\limits_\Omega (f_2-f_1) \mathscr E d\mu_{\gamma_1}$, for which we note that, using the density of $C^{\infty}_c(\Omega)$ in $H^2_\rho(\Omega)$, 
 \begin{align}
  \Big| \int\limits_\Omega (f_2-f_1) \mathscr E_1 d\mu_{\gamma_1}\Big| &\leq \|f_2-f_1\|_{L^2_\rho(\Omega, \gamma_1)}\|\mathscr E_1\|_{L^2_{\rho^{-1}}(\Omega, \gamma_1)}\nonumber \\ & \leq C \|f_2-f_1\|_{L^2_\rho(\Omega, \gamma_1)}\|\mathscr E_1\|_{\mathcal B_0(\Omega)} \label{eq:int-err1} \\
    \Big| \int\limits_\Omega (f_2-f_1) \mathscr E_2 d\mu_{\gamma_1}\Big|&=\Big| \int\limits_\Omega \rho L_{\gamma_1}^*(f_2-f_1)\cdot_{\gamma_1}   [(L_{\gamma_1}^*-L_{\gamma_2}^*)(f_2)] d\mu_{\gamma_1}\Big| \nonumber\\
    & \leq  C  \|\gamma_1-\gamma_2\|_{C^{2}(\Omega)} \|f_2\|_{H^2_\rho(\Omega,\gamma_1)} \|f_2-f_1\|_{H^2_\rho(\Omega, \gamma_1)}. \label{eq:int-err2}
 \end{align}

We next estimate $(b(\gamma_2)-b(\gamma_1)) \zeta f^{m}= R(\gamma_2+h_2)-R(\gamma_1+h_1)$.  For simplicity of exposition, consider the case $m>0$.   Applying the Taylor expansion we have 
\begin{align*}
(b(\gamma_2)-b(\gamma_1))  \int\limits_\Omega \zeta (f^{m})^2 d\mu_{\gamma_1}  &= \int\limits_{\Omega} (R(\gamma_2)-R(\gamma_1)) f^{m} d\mu_{\gamma_1} + \int\limits_{\Omega} L_{\gamma_2}(h_2)  f^{m}d\mu_{\gamma_2} \nonumber \\
&\quad + \int\limits_{\Omega} L_{\gamma_2}(h_2 )f^{m}[ d\mu_{\gamma_1}-d\mu_{\gamma_2}]  -\int\limits_{\Omega} L_{\gamma_1} (h_1) f^{m}d\mu_{\gamma_1}\\
&\quad +\int\limits_{\Omega} [\mathcal Q_{\gamma_2} (h_2)-\mathcal Q_{\gamma_1} (h_1)] f^{m}d\mu_{\gamma_1}.\nonumber
\end{align*}
Therefore by (\ref{eq:Q-est})
\begin{align} |b(\gamma_2)-b(\gamma_1)| & \leq C\left(  \| \gamma_1-\gamma_2\|_{C^{2,\alpha}(\Omega)} + \|h_1-h_2\|_{\mathcal B_2(\Omega)} \|(h_1, h_2)\|_{\mathcal B_2(\Omega)} \right) \nonumber \\
& \qquad + \left|  \int\limits_{\Omega} L_{\gamma_2}(h_2)  f^{m}d\mu_{\gamma_2} -\int\limits_{\Omega} L_{\gamma_1} (h_1) f^{m}d\mu_{\gamma_1}\right| .  \label{eq:c-est-0} \end{align}

We just need to estimate the integrals in (\ref{eq:c-est-0}).  This is easy, but the required expansion is cumbersome.   Since $L_{g_S^{m}}^* (f^{m})=0$,  and $h(\gamma_i)$ vanishes along with derivatives at $\partial \Omega$, we obtain

\begin{align*} 
\int\limits_{\Omega} & L_{\gamma_2}(h_2  ) f^{m}d\mu_{\gamma_2}  -\int\limits_{\Omega} L_{\gamma_1}(h_1)   f^{m}d\mu_{\gamma_1} \\
&=\int\limits_{\Omega} L_{\gamma_2}(h_2)   f^{m}[d\mu_{\gamma_2}-d\mu_{g_S^{m}}]+ \int\limits_{\Omega} [L_{\gamma_2}-L_{g_S^{m}}] (h_2 ) f^{m}d\mu_{g_S^{m}} -\int\limits_{\Omega} L_{\gamma_1}(h_1)   f^{m}d\mu_{\gamma_1}\\
&=  \int\limits_{\Omega} L_{\gamma_2}(h_2 -h_1)  f^{m}[d\mu_{\gamma_2}-d\mu_{g_S^{m}}] +\int\limits_{\Omega} [L_{\gamma_2}-L_{\gamma_1}] (h_1)  f^{m}[d\mu_{\gamma_2}-d\mu_{g_S^{m}}]\\
& \qquad +\int\limits_{\Omega}  L_{\gamma_1}(h_1 ) f^{m}[d\mu_{\gamma_2} -d\mu_{\gamma_1}] + \int\limits_{\Omega} [L_{g_S^{m}}-L_{\gamma_1} ] (h_1) f^{m}d\mu_{g_S^{m}} \\
&\qquad  + \int\limits_{\Omega} [L_{\gamma_2}-L_{g_S^{m}}] (h_2)   f^{m}d\mu_{g_S^{m}} \\
&=  \int\limits_{\Omega} L_{\gamma_2}(h_2 -h_1)  f^{m}[d\mu_{\gamma_2}-d\mu_{g_S^{m}}] +\int\limits_{\Omega} [L_{\gamma_2}-L_{\gamma_1}] (h_1)  f^{m}[d\mu_{\gamma_2}-d\mu_{g_S^{m}}]\\
& \qquad +\int\limits_{\Omega}  L_{\gamma_1}(h_1)   f^{m}[d\mu_{\gamma_2} -d\mu_{\gamma_1}] + \int\limits_{\Omega} [L_{\gamma_2}-L_{\gamma_1} ] (h_1)   f^{m}d\mu_{g_S^{m}} \\
&\qquad  + \int\limits_{\Omega}[ L_{\gamma_2}-L_{g_S^{m}}] (h_2 -h_1) f^{m}d\mu_{g_S^{m}}. 
\end{align*}
 Together with (\ref{eq:c-est-0}) this allows us to conclude 
 \begin{align} |b(\gamma_2)-b(\gamma_1)| & \leq C \| \gamma_1-\gamma_2\|_{C^{2,\alpha}(\Omega)} \nonumber\\ & \qquad + D \|h_1-h_2\|_{\mathcal B_2(\Omega)} \left( \|\gamma_2-g_S^{m}\|_{C^2(\Omega)} + \|(h_1, h_2)\|_{\mathcal B_2(\Omega)} \right)  . \label{eq:c-est-1} \end{align}
 We obtain the same estimate in the $m=0$ case by integrating $b(\gamma_2)-b(\gamma_1)) \zeta f^{0}= R(\gamma_2+h_2)-R(\gamma_1+h_1)$ against each of the components of $f^0=\langle 1, x^1, x^2, x^3\rangle$, and estimating just as above. 
 
Next, we estimate $\|f_2-f_2\|_{L^2_\rho(\Omega, \gamma_1)}$.  To this end, multiply (\ref{eq:f-diff}) by $(f_2-f_1)$ and integrate,  using that $f_i\in S_{\gamma_i}$ along with the density of $C^{\infty}_c(\Omega)$ in $H^2_\rho(\Omega)$, to obtain 

 \begin{align}\|L_{\gamma_1}^*(f_2-f_1)\|^2_{L^2_{\rho}(\Omega, \gamma_1)} &= \int\limits_\Omega (f_2-f_1)(b(\gamma_2)-b(\gamma_1)) \zeta f^{m}d\mu_{\gamma_1} + \int\limits_\Omega (f_2-f_1) \mathscr E d\mu_{\gamma_1}
 \label{eq:cty-est-0}. 
 \end{align}
 
We use this to estimate $f_2-f_1$.  Observe that if $L^*_{\gamma_1}$ has trivial kernel, then by the same arguments in the proof of Proposition \ref{lem:eq:b-est-wtd}, there is a constant $C_1$ so that for all $f\in H^2_\rho(\Omega,\gamma_1)$, $\|f\|_{H^2_{\rho}(\Omega,\gamma_1)}\leq C_1 \|L_{\gamma_1}^* (f)\|_{L^2_\rho(\Omega, \gamma_1)}$, which gives us an estimate by setting $f=f_2-f_1$. In general, $f_2=\Pi_{\gamma_1}(f_2)+\widetilde f_2$, where for $m>0$, 
 \begin{align} \label{eq:f2} \widetilde f_2 = \tfrac{\langle f_2, \zeta f^{m}\rangle_{L^2(\Omega, \gamma_1)}}{\langle\zeta f^{m}, \zeta f^{m}\rangle_{L^2(\Omega, \gamma_1)}} \; \zeta f^{m}=\tfrac{\int\limits_\Omega f_2  \zeta f^{m} (d\mu_{\gamma_1}-d\mu_{\gamma_2})}{\langle\zeta f^{m}, \zeta f^{m}\rangle_{L^2(\Omega, \gamma_1)}} \; \zeta f^{m}\; , \end{align} and analogously for $m=0$.  Hence $\widetilde f_2= a \zeta f^m$, where $\|a\|\leq C \| \gamma_2-\gamma_1\|_{C^0(\Omega)}$. Using this and (\ref{eq:b-est-wtd}) we have 
 \begin{align}
 \|f_2-f_1\|_{H^2_\rho(\Omega, \gamma_1)} &\leq \|\Pi_{\gamma_1} (f_2)-f_1\|_{H^2_\rho(\Omega, \gamma_1)}+ \|\widetilde f_2\|_{H^2_\rho(\Omega, \gamma_1)} \nonumber \\
 & \leq C \|L^*_{\gamma_1}[\Pi_{\gamma_1} (f_2)-f_1]\|_{L^2_\rho(\Omega, \gamma_1)}+\|\widetilde f_2\|_{H^2_\rho(\Omega, \gamma_1)}  \nonumber\\
 &\leq C \|L^*_{\gamma_1}(f_2-f_1)\|_{L^2_\rho(\Omega, \gamma_1)} +C \|L^*_{\gamma_1}(\widetilde f_2)\|_{L^2_\rho(\Omega, \gamma_1)}+\|\widetilde f_2\|_{H^2_\rho(\Omega, \gamma_1)} \nonumber\\
  &\leq C \|L^*_{\gamma_1}(f_2-f_1)\|_{L^2_\rho(\Omega, \gamma_1)} +C' \|\widetilde f_2\|_{H^2_\rho(\Omega, \gamma_1)} \nonumber\\
  &\leq C_1 \Big(\|L^*_{\gamma_1}(f_2-f_1)\|_{L^2_\rho(\Omega, \gamma_1)} +\|\gamma_2-\gamma_1\|_{C^0(\Omega)} \Big).
 \end{align}

 We thus conclude, using (\ref{eq:cty-est-0}) together with (\ref{eq:err1}), (\ref{eq:int-err1}), (\ref{eq:int-err2}) and (\ref{eq:c-est-1}), 
  \begin{align*}
 \|f_2  -f_1\|^2_{H^2_\rho(\Omega, \gamma_1)} & \leq 2(C_1)^2 \Big(\|L^*_{\gamma_1}(f_2-f_1)\|^2_{L^2_\rho(\Omega, \gamma_1)} + \|\gamma_2-\gamma_1\|^2_{C^0(\Omega)}\Big) \\
 &\leq C_2 \|f_2-f_1\|_{H^2_\rho(\Omega, \gamma_1)} \Big( \|\gamma_2-\gamma_1\|_{C^{2,\alpha}(\Omega)} \\
 & \qquad + \|h_2-h_2\|_{\mathcal B_2(\Omega)} (\|\gamma_2-g_S^{m}\|_{C^2(\Omega)}+ \|(h_1, h_2)\|_{\mathcal B_2(\Omega)})\Big)\\ 
 &\qquad + 2C_1^2 \|\gamma_2-\gamma_1\|^2_{C^0(\Omega)}. 
 \end{align*}
Noting for nonnegative $\beta$ and $\delta$, $w^2\leq \beta w + \delta^2$ implies $w\leq  \beta + \delta$, we conclude 
\begin{align} 
 \|f_2  -f_1\|_{H^2_\rho(\Omega, \gamma_1)} & \leq C \Big( \|\gamma_2-\gamma_1\|_{C^{2,\alpha}(\Omega)} \nonumber \\
 & \;\;\qquad +\|h_2-h_2\|_{\mathcal B_2(\Omega)}( \|\gamma_2-g_S^{m}\|_{C^2(\Omega)}+ \|(h_1, h_2)\|_{\mathcal B_2(\Omega)})\Big). \label{eq:f-diff-sob} 
 \end{align}

We have the following, using (\ref{eq:err1}), (\ref{eq:err2}), (\ref{eq:c-est-1}), (\ref{eq:f-diff-sob}) and the weighted Schauder estimate (\ref{eq:wtd-sch}) (where the constant ``$C$" can change from line to line)
\begin{align*}
\|h_2-h_1\|_{\mathcal B_2(\Omega)}&= \| \rho L_{\gamma_2}^*(f_2)- \rho L_{\gamma_1}^*(f_1)\|_{\mathcal B_2(\Omega)} \\
&\leq  \| \rho L_{\gamma_1}^*(f_2-f_1)\|_{\mathcal B_2(\Omega)}+ \| \rho (L_{\gamma_2}^*-  L_{\gamma_1}^*)(f_2)\|_{\mathcal B_2(\Omega)}\\
&\leq C\| f_2-f_1\|_{\mathcal B_4(\Omega)}+ \| \rho (L_{\gamma_2}^*-  L_{\gamma_1}^*)(f_2)\|_{\mathcal B_2(\Omega)}\\
&\leq C\Big( \| f_2-f_1\|_{\mathcal B_4(\Omega)}+ \|\gamma_2-\gamma_1\|_{C^{4,\alpha}(\Omega)} \|f_2\|_{\mathcal B_4(\Omega)}\Big)\\
&\leq C \Big( \| \rho^{-1} L_{\gamma_1}[\rho L_{\gamma_1}^*(f_2-f_1)]\|_{C^{0,\alpha}_{\phi, \phi^{4+\frac32} \rho^{\frac12}}}+\| f_2-f_1\|_{H_\rho^{2}(\Omega)}\\
&\qquad + \|\gamma_2-\gamma_1\|_{C^{4,\alpha}(\Omega)} \|f_2\|_{\mathcal B_4(\Omega)} \Big)\\
&\leq C\Big( \| \rho^{-1} (b(\gamma_2)-b(\gamma_1)) \zeta f^{m}\|_{\phi, \phi^{4+\frac32} \rho^{\frac12}}+ \|\rho^{-1} \mathscr E\|_{C^{0,\alpha}_{\phi, \phi^{4+\frac32} \rho^{\frac12}}}\\
&\qquad \| f_2-f_1\|_{H_\rho^{2}(\Omega)} + \|\gamma_2-\gamma_1\|_{C^{4,\alpha}(\Omega)} \|f_2\|_{\mathcal B_4(\Omega)} \Big) \\
&\leq C  \Big( \|\gamma_2-\gamma_1\|_{C^{4,\alpha}(\Omega)}\\ 
& \qquad + \|h_2-h_1\|_{\mathcal B_2(\Omega)} \Big( \|\gamma_2-g_S^{m}\|_{C^2(\Omega)}+ \|(h_1, h_2)\|_{\mathcal B_2(\Omega)}\Big) \Big).
\end{align*}
For $\gamma_1$ and $\gamma_2$ close enough to $g_S^{m}$, $R(\gamma_i)$ is close to zero, so that $\|h_i\|_{\mathcal B_2(\Omega)}\leq C \|R(\gamma_i)\|$ is sufficiently small so that we can absorb the second term on the right-hand side, to conclude 
$\|h_2-h_1\|_{\mathcal B_2(\Omega)}\leq C' \|\gamma_2-\gamma_1\|_{C^{4,\alpha}(\Omega)} $, as desired. 
\end{proof}

  \begin{remark} We observe that if $L^*_{\gamma_1}$ had non-trivial kernel, then as $R(\gamma_1)$ is constant (\cite[Thm. 1]{fm-sc-def}, cf.  \cite[Prop. 2.3]{cor:schw}  or \cite[Prop. 2.1]{cem}), it must vanish.     In that case, $h_1=0$, and then $R(\gamma)\rightarrow 0$ in $\mathcal B_0(\Omega)$ for $\gamma\rightarrow \gamma_1$ in $C^{2, \alpha}(\overline{\Omega})$ and with $R(\gamma)$ supported in a fixed subset $\overline{\Omega'}\subset \Omega$.  Via Lemma \ref{lem:eq:rjest}, we can then conclude $h(\gamma) \rightarrow 0=h_1$ in $\mathcal B_2(\Omega)$. 
 \end{remark}

\bibliographystyle{plain} 
\bibliography{bibliography.bib} 

\end{document}